\newcommand{\Q}{{\mathbb{Q}}}
\newcommand{\C}{\mathbb{C}}
\newcommand{\F}{\mathcal{F}}
\newcommand{\sumD}{ \sum_{K \in \F^\pm(X)}}
\newcommand{\sumDu}{ \sum_{K \in \F^\pm(u)}}
\newcommand{\D}{\mathfrak{D}}
\newcommand{\Ga}{\Gamma}
\newcommand{\vp}{\mathbf{p}}
\newcommand{\ve}{\mathbf{e}}
\newcommand{\vs}{\mathbf{s}}
\newcommand{\vk}{\mathbf{k}}
\newcommand{\eps}{\varepsilon}
\newcommand{\p}{\mathfrak p}
\def\Re{\operatorname{Re}}
\def\Im{\operatorname{Im}}
\newtheorem{X}{X}[section]
\newtheorem{corollary}[X]{Corollary}
\newtheorem{lemma}[X]{Lemma}
\newtheorem{proposition}[X]{Proposition}
\newtheorem{theorem}[X]{Theorem}
\newtheorem{conjecture}[X]{Conjecture}
\theoremstyle{definition}
\newtheorem{remark}[X]{Remark}
\numberwithin{equation}{section}
\begin{document}

\title[Low-lying zeros of cubic Dedekind zeta functions]{Omega results for cubic field counts via lower-order terms in the one-level density}

\author{Peter J. Cho, Daniel Fiorilli, Yoonbok Lee and Anders S\"odergren}

\date{\today}

\address{Department of Mathematical Sciences, Ulsan National Institute of Science and Technology, Ulsan, Korea}
\email{petercho@unist.ac.kr}
\address{CNRS, Universit\'e Paris-Saclay, Laboratoire de math\'ematiques d'Orsay, 91405, Orsay, France}
\email{daniel.fiorilli@universite-paris-saclay.fr}
\address{Department of Mathematics, Research Institute of Basic Sciences, Incheon National University, Korea}
\email{leeyb@inu.ac.kr, leeyb131@gmail.com}
\address{Department of Mathematical Sciences, Chalmers University of Technology and the University of Gothenburg, SE-412 96 Gothenburg, Sweden}
\email{andesod@chalmers.se} 
\thanks{Peter J. Cho is supported by  the NRF grant funded by the  Korea government(MSIT) (No. 2019R1F1A1062599) and the Basic  Science Research Program(2020R1A4A1016649). Yoonbok Lee is supported by  the NRF grant funded by the  Korea government(MSIT)  (No.2019R1F1A1050795). Daniel Fiorilli was supported at the University of Ottawa by an NSERC discovery grant. Anders S\"odergren was supported by a grant from the Swedish Research Council (grant 2016-03759).}

\maketitle

\begin{abstract}
In this paper we obtain a precise formula for the $1$-level density of $L$-functions attached to non-Galois cubic Dedekind zeta functions. We find a secondary term
which is unique to this context, in the sense that no lower-order term of this shape has appeared in previously studied families. The presence of this new term allows us to deduce an omega result for cubic field counting functions, under the assumption of the Generalized Riemann Hypothesis. We also investigate the associated $L$-functions Ratios Conjecture, and find that it does not predict this new lower-order term.  Taking into account the secondary term in Roberts' Conjecture, we refine the Ratios Conjecture to one which captures this new term. Finally, we show that any improvement in the exponent of the error term of the recent Bhargava--Taniguchi--Thorne cubic field counting estimate would imply that the best possible error term in the refined Ratios Conjecture is $O_\eps(X^{-\frac 13+\eps})$. This is in opposition with all previously studied families, in which the expected error in the Ratios Conjecture prediction for the $1$-level density is $O_\eps(X^{-\frac 12+\eps})$. 
\end{abstract}

\section{Introduction}

In \cite{KS1,KS2} Katz and Sarnak made a series of fundamental conjectures about statistics of low-lying zeros in families of $L$-functions. Recently, these conjectures have been refined by Sarnak, Shin and Templier~\cite{SaST} for families of parametric $L$-functions. There is a huge body of work on the confirmation of these conjectures for particular test functions in various families, many of which are harmonic (see, e.g., \cite{ILS,Ru,FI,HR,ST}). There are significantly fewer geometric families that have been studied. In this context we mention the work of Miller~\cite{M} and Young~\cite{Y2} on families of elliptic curve $L$-functions, and that of Yang~\cite{Y}, Cho and Kim~\cite{CK1,CK2} and Shankar, S\"odergren and Templier~\cite{SST1} on families of Artin $L$-functions. 

In families of Artin $L$-functions, these results are strongly linked with counts of number fields. More precisely, the set of admissible test functions is determined by the quality of the error terms in such counting functions. In this paper we consider the sets 
$$\F^\pm(X):=\{K/\Q \text{ non-Galois}\,:\, [K:\Q]=3, 0<\pm D_K < X\},$$
where for each cubic field $K/\Q$ of discriminant $D_K$ we include only one of its three isomorphic copies. The first power-saving estimate for the cardinality $N^\pm(X):=|\F^\pm (X)|$
was obtained by Belabas, Bhargava and Pomerance~\cite{BBP}, and was later refined by Bhargava, Shankar and Tsimerman~\cite{BST}, Taniguchi and Thorne~\cite{TT}, and Bhargava, Taniguchi and Thorne~\cite{BTT}. The last three of these estimates take the shape
\begin{equation} 
N^\pm(X) = C_1^\pm X + C_2^\pm  X^{\frac 56} + O_{\eps}(X^{\theta+\eps})
\label{equation TT}
\end{equation}
for certain explicit values of $ \theta <\frac 56$, implying in particular Roberts' conjecture \cite{Ro}. Here,
$$ C_1^+:=\frac{1}{12\zeta(3)};\hspace{.5cm} C_2^+:=\frac{4\zeta(\frac 13)}{5\Gamma( \frac 23)^3\zeta(\frac 53)}; \hspace{.5cm} C_1^-:=\frac{1}{4\zeta(3)};\hspace{.5cm} C_2^-:=\frac{4\sqrt 3\zeta(\frac 13)}{5\Gamma( \frac 23)^3\zeta(\frac 53)}.$$
The presence of this secondary term is a striking feature of this family, and we are interested in studying its consequences for the distribution of low-lying zeros. More precisely, the estimate~\eqref{equation TT} suggests that one should be able to extract a corresponding lower-order term in various statistics on those zeros.

In addition to~\eqref{equation TT}, we will consider precise estimates involving local conditions, which are of the form
\begin{align}\label{equation TT local 2}
  N^\pm_{p} (X,T) : &=  \#\{ K \in \mathcal F^\pm(X) : p \text{ has splitting type }T \text{ in } K\} \notag
\\&  = A^\pm_p (T ) X +B^\pm_p(T) X^{\frac 56} + O_{\eps}(p^{ \omega}X^{\theta+\eps}),
\end{align}  
where $p$ is a given prime, $T$ is a splitting type, and the constants $A^\pm_p (T )$ and $B^\pm_p (T )$ are defined in Section~\ref{section background}. Here, $\theta$ is the same constant as that in~\eqref{equation TT}, and $\omega\geq 0$. Note in particular that~\eqref{equation TT local 2} implies~\eqref{equation TT} (take $p=2$ in~\eqref{equation TT local 2} and sum over all splitting types $T$).

Perhaps surprisingly, it turns out that the study of low-lying zeros has an application to cubic field counts. More precisely, we were able to obtain the following conditional omega result for $N^\pm_{p} (X,T)$.
\begin{theorem}
\label{theorem omega result counts}
Assume the Generalized Riemann Hypothesis for $\zeta_K(s)$ for each $K\in\mathcal F^{\pm}(X)$. If $\theta,\omega\geq 0$ are admissible values in~\eqref{equation TT local 2}, then $\theta+\omega\geq \frac 12$.
\end{theorem}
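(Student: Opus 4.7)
The plan is a proof by contradiction: I assume that $(\theta,\omega)$ is admissible in~\eqref{equation TT local 2} with $\theta+\omega<\tfrac12$, and extract two asymptotic expansions of the averaged one-level density $D(\phi,X)$ of $\mathcal F^\pm(X)$ that cannot coexist.

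For the first expansion, I apply Weil's explicit formula to each $K\in\mathcal F^\pm(X)$ under GRH. Using a Schwartz test function $\phi$ whose Fourier transform $\hat\phi$ is supported in $(-\sigma,\sigma)$, and averaging over $K$, the prime-sum side becomes
$$
\frac{2}{N^\pm(X)\log X}\sum_{p,k}\frac{\log p}{p^{k/2}}\hat\phi\!\left(\frac{k\log p}{\log X}\right)\sum_{K\in\mathcal F^\pm(X)} a_K(p^k),
$$
where $a_K(p^k)$ encodes the splitting of $p$ in $K$, and each inner sum is a linear combination of the counts $N_p^\pm(X,T)$. Inserting~\eqref{equation TT local 2} produces a main term $\asymp X$, a secondary term $\asymp X^{5/6}$, and a counting error whose partial summation against $p^\omega\log p/\sqrt p$ on $p\leq X^\sigma$ is bounded by $O(X^{\theta+\sigma(1/2+\omega)+\epsilon})$. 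The assumption $\theta+\omega<\tfrac12$ permits a choice of small $\sigma$ making this $O(X^{1/2-\delta})$ for some $\delta>0$. Dividing by $N^\pm(X)\sim X$ yields
$$
D(\phi,X) = \mathcal M_{\mathrm{KS}}(\phi) + \frac{B(\phi)}{X^{1/6}\log X} + O(X^{-1/2-\delta}),
$$
where $\mathcal M_{\mathrm{KS}}$ is the Katz--Sarnak main term for this family and $B(\phi)$ is an explicit functional of $\phi$ built from the constants $B_p^\pm(T)$.

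For the competing expansion, I would use GRH directly to prove $D(\phi,X)=\mathcal M_{\mathrm{KS}}(\phi)+O_\epsilon(X^{-1/2+\epsilon})$ via a contour-integral representation of the averaged logarithmic derivative $\sum_K \zeta_K'/\zeta_K(s)$ and a shift to $\Re(s)=\tfrac12-\epsilon$, which under GRH crosses no poles other than those producing the main term on the critical line. Since $X^{-1/6}/\log X$ asymptotically dominates $X^{-1/2+\epsilon}$, the two expansions can only be compatible when $B(\phi)\equiv 0$. The proof then concludes by computing $B(\phi)$ explicitly for a judiciously chosen admissible $\phi$ using the formulas of Roberts, Bhargava--Shankar--Tsimerman, and Taniguchi--Thorne for $B_p^\pm(T)$, and checking $B(\phi)\neq 0$.

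The main obstacle is establishing the GRH-only bound of size $X^{-1/2+\epsilon}$ on $D(\phi,X)-\mathcal M_{\mathrm{KS}}(\phi)$ without any circular input from~\eqref{equation TT local 2}. Controlling the averaged logarithmic derivative $\tfrac1{N^\pm(X)}\sum_K\zeta_K'/\zeta_K(s)$ uniformly on a thin strip to the left of the critical line is delicate; this is precisely the step where the refined $L$-functions Ratios Conjecture described in the abstract intervenes, and where the cubic family diverges in behaviour from the classical symmetric families already in the literature. By contrast, verifying the nonvanishing of $B(\phi)$ for some admissible $\phi$ is a direct calculation once the local factors are made explicit.
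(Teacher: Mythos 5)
Your proposal has a genuine gap that the paper's argument is specifically designed to avoid. You try to contradict a secondary term of size $X^{-1/6}/\log X$ against a hypothetical GRH-only error bound $O_\eps(X^{-1/2+\eps})$ on $D(\phi,X)-\mathcal M_{\mathrm{KS}}(\phi)$. But as you yourself acknowledge, that bound is not a consequence of GRH alone: it is exactly the kind of statement that the Ratios Conjecture predicts and that this paper shows fails in this family. Indeed, the paper's Theorem~\ref{theorem main} shows the true deviation from $\mathcal M_{\mathrm{KS}}$ contains a term of order $X^{(\sigma-1)/6+o(1)}$, which for any $\sigma>0$ dwarfs $X^{-1/2}$; so the ``competing expansion'' you postulate is not just unproven but incompatible with what is actually true. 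There is no way to repair your step~3 from GRH alone, and it is not a technicality to be filled in later --- it is the crux.

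The paper's proof sidesteps this entirely by a clever choice of test function: take $\widehat\phi\geq 0$ with $\sup(\mathrm{supp}\,\widehat\phi)=\sigma$ strictly \emph{greater} than $1$ (and less than $(5/6-\theta)/(\omega+1/3)$, which is possible precisely when $\theta+\omega<1/2$). With $\sigma>1$, the secondary prime sum $\sum_{p\leq X^\sigma}(\log p)/p^{5/6}$, once multiplied by $X^{5/6}/N^\pm(X)\asymp X^{-1/6}$, is of size $X^{(\sigma-1)/6}$, which \emph{diverges} as $X\to\infty$. The contradiction is then with the completely elementary bound $\frac{1}{N^\pm(X)}\sum_K \D_\phi(K) = O(\log X)$, a direct consequence of RH and the Riemann--von~Mangoldt formula; no sharp GRH error estimate is needed. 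Two further points: the paper's choice $\widehat\phi\geq 0$ makes the sign/nonvanishing of the divergent contribution automatic, so the explicit calculation of a functional $B(\phi)$ and verification that it is nonzero, which you propose as a separate step, is unnecessary; and the requirement $\theta+\omega<1/2$ enters exactly through the need to fit $\sigma$ into the nonempty interval $(1,(5/6-\theta)/(\omega+1/3))$, so the hypothesis is used in a structurally different place than in your plan. I'd encourage you to internalize this ``blow up the secondary term with large support and compare to the trivial bound'' trick: it converts a delicate lower-order comparison into a crude divergence argument, and it is the only part of your outline that cannot be made rigorous under the stated hypotheses.
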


As part of this project, we have produced numerical data which suggests that $\theta=\frac 12$ and any $\omega >0$ are admissible values in~\eqref{equation TT local 2} (indicating in particular that the bound $\omega+\theta \geq \frac 12$ in Theorem~\ref{theorem omega result counts} could be best possible). We have made several graphs to support this conjecture in Appendix~\ref{appendix}. As a first example of these results, in Figure~\ref{figure intro} we display a graph of $X^{-\frac 12}(N^+_{5} (X,T)-A^+_5 (T ) X -B^+_5(T) X^{\frac 56} ) $ for the various splitting types $T$, which suggests that $\theta=\frac 12$ is admissible and best possible. 

\begin{figure}[h]
\label{figure intro}

\begin{center}
\includegraphics[scale=.55]{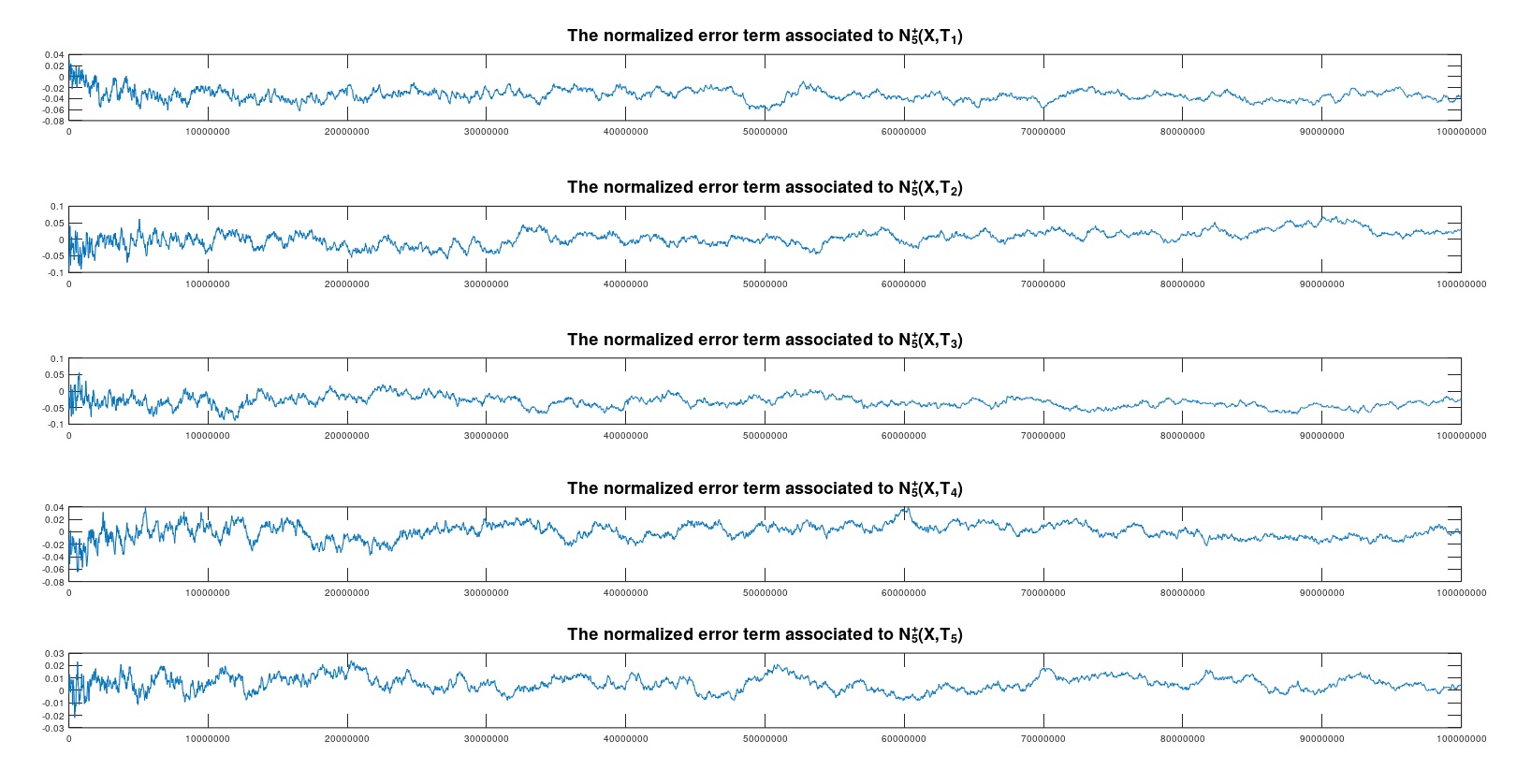} 

\end{center}\caption{The normalized error terms $X^{-\frac 12}(N^+_{5} (X,T)-A^+_5 (T ) X -B^+_5(T) X^{\frac 56} ) $ for the splitting types $T= T_1,\dots,T_5$ as described in Section~\ref{section background}. }
\end{figure}

Let us now describe our unconditional result on low-lying zeros. For a cubic field $K$, we will focus on the Dedekind zeta function $\zeta_K(s)$, whose
 $1$-level density is defined by
$$ \D_{\phi}(K) :=  \sum_{\gamma_{K}} \phi\left(  \frac{\log (X/(2 \pi e)^2)}{2\pi} \gamma_K\right).$$
Here, $\phi$ is an even, smooth and rapidly decaying real function for which the Fourier transform
$$\widehat \phi(\xi) :=\int_{\mathbb R} \phi(t) e^{-2\pi i \xi t} dt $$ is compactly supported. Note that $\phi$ can be extended to an entire function through the inverse Fourier transform. Moreover, $X$ is a parameter (approximately equal to $|D_K|$) and $\rho_K=\frac 12+i\gamma_K$ runs through the non-trivial zeros\footnote{The Riemann Hypothesis for $\zeta_K(s)$ implies that $\gamma_K\in \mathbb R$.} of $\zeta_K(s)/\zeta(s)$. In order to understand the distribution of the $\gamma_K$, we will average $\D_{\phi}(K)$ over the family $\mathcal F^\pm(X)$. Our main technical result is a precise estimation of this average. 

\begin{theorem}
\label{theorem main} 
Assume that the cubic field count~\eqref{equation TT local 2} holds for some fixed parameters $\frac 12\leq \theta <\frac 56$ and $ \omega\geq0$.
Then, for any real even Schwartz function $\phi$ for which $\sigma:=\sup({\rm supp}(\widehat \phi))< \frac {1-\theta}{\omega+ \frac 12}$, we have the estimate
\begin{multline}\frac 1{N^\pm(X)} \sumD \D_{\phi}(K)=\widehat \phi(0)\Big(1 + \frac{  \log (4 \pi^2 e)  }{L} -\frac{C_2^\pm}{5C_1^\pm} \frac{X^{-\frac 16}}{L} 
+ \frac{(C_2^\pm)^2 }{5(C_1^\pm)^2 } \frac{X^{-\frac 13}}{L} \Big) \\
+  \frac1{\pi}\int_{-\infty}^{\infty}\phi\Big(\frac{Lr}{2\pi}\Big)\Re\Big(\frac{\Ga'_{\pm}}{\Ga_{\pm}}(\tfrac12+ir)\Big)dr   -\frac{2}{L}\sum_{p,e}\frac{x_p\log p}{p^{\frac e2}}\widehat\phi\Big(\frac{\log p^e}{L}\Big) (\theta_e+\tfrac 1p)\\  -\frac{2C_2^\pm X^{-\frac 16}}{C_1^\pm L}\Big( 1-\frac{C_2^\pm}{C_1^\pm}X^{-\frac{1}{6}}\Big) \sum_{p,e}\frac{\log p}{p^{\frac e2}}\widehat\phi\Big(\frac{\log p^e}{L}\Big) \beta_e(p) +O_{\eps}(X^{\theta - 1 + \sigma(\omega + \frac12 ) +\eps}) , 
\label{equation main theorem}
\end{multline}
where $\Ga_+(s):=
\pi^{-s}\Gamma(\frac s2)^2$, $\Ga_-(s):=
\pi^{-s}\Gamma(\frac s2)\Gamma(\frac {s+1}2)$, $x_p:=(1+\frac 1p+\frac 1{p^2})^{-1}$, $\theta_e$ and $\beta_e(p)$ are defined in~\eqref{definition theta_e} and~\eqref{eqaution definition beta}, respectively, and $L:=\log \big( \frac{X}{(2 \pi e)^2}\big)$.
\end{theorem}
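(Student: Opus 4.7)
The plan is to apply Weil's explicit formula to the Artin $L$-function $L(s,\rho_K)=\zeta_K(s)/\zeta(s)$ (whose conductor is $|D_K|$ and whose gamma factor is $\Ga_\pm$), which writes $\D_\phi(K)$ as the sum of three contributions: a conductor term proportional to $\widehat\phi(0)\log|D_K|/L$, an archimedean integral of $\phi$ against $\Ga_\pm'/\Ga_\pm$, and a prime-power sum
\[
-\frac{2}{L}\sum_{p,e}\frac{\Lambda_{K}(p^e)-\Lambda(p^e)}{p^{e/2}}\,\widehat\phi\Big(\frac{\log p^e}{L}\Big),
\]
where $\Lambda_K(p^e)$ is determined by the splitting type of $p$ in $K$. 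I would then average each of these three contributions over $K\in\F^\pm(X)$.

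For the conductor term, partial summation against \eqref{equation TT} yields
\[
\frac{1}{N^\pm(X)}\sum_{K\in\F^\pm(X)}\log|D_K|=\log X-\frac{1}{N^\pm(X)}\int_1^X\frac{N^\pm(t)}{t}\,dt,
\]
and expanding $1/N^\pm(X)$ as a geometric series in $(C_2^\pm/C_1^\pm)X^{-1/6}$ produces the main term $\widehat\phi(0)(1+\log(4\pi^2e)/L)$ together with the $X^{-1/6}$ and $X^{-1/3}$ secondary corrections in \eqref{equation main theorem}. The archimedean integral is independent of $K$ (depending only on the signature, through $\Ga_\pm$) and contributes the $\Ga_\pm'/\Ga_\pm$ integral directly.

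The crux of the argument is the averaging of the prime-power sum. For each fixed $(p,e)$, the value $\Lambda_K(p^e)-\Lambda(p^e)$ depends only on the splitting type $T\in\{T_1,\dots,T_5\}$ of $p$ in $K$, so I would partition $\F^\pm(X)$ according to $T$ and substitute \eqref{equation TT local 2}. The contributions $A_p^\pm(T)X$ and $B_p^\pm(T)X^{5/6}$, weighted by the trace at $T$ and summed over $T$, reassemble---using the explicit formulas for $A_p^\pm(T)$ and $B_p^\pm(T)$ from Section~\ref{section background}---into the two prime sums in the theorem, with the weights $x_p$, $\theta_e$, and $\beta_e(p)$ arising naturally from these combinatorial identities. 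The pre-factor $\bigl(1-(C_2^\pm/C_1^\pm)X^{-1/6}\bigr)$ in front of the second prime sum is exactly the relevant part of the expansion of $1/N^\pm(X)$, and captures the interaction between the $X^{5/6}$ term in the local counts and the overall normalization.

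Finally, the support condition ${\rm supp}(\widehat\phi)\subseteq[-\sigma,\sigma]$ truncates the prime sum to $p^e\leq X^\sigma$, so the total contribution of the error $O_\eps(p^\omega X^{\theta+\eps})$ in \eqref{equation TT local 2} is bounded by
\[
\frac{X^{\theta+\eps}}{X}\sum_{p^e\leq X^\sigma}\frac{p^\omega\log p}{p^{e/2}}\ll X^{\theta-1+\sigma(\omega+\frac12)+\eps},
\]
with the $e=1$ terms dominating by an elementary comparison and giving exactly the stated error. I expect the main technical obstacle to be the bookkeeping needed to match the resulting sums of $A_p^\pm(T)$- and $B_p^\pm(T)$-contributions against the closed-form expressions involving $x_p$, $\theta_e$, and $\beta_e(p)$; this requires the precise density values at each splitting type (including the partially and totally ramified ones) and a careful verification that all arithmetic cancellations occur. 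A secondary delicate point will be handling the $e\geq 2$ terms with enough precision to retain the full $X^{-1/6}$ correction rather than losing it in an error term.
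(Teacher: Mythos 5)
Your proposal follows the same three-step structure as the paper's proof: the explicit formula (Lemma~\ref{lemma explicit formula}) splitting $\D_\phi(K)$ into a conductor term, an archimedean integral, and a prime-power sum; partial summation against~\eqref{equation TT} for the conductor average (Lemma~\ref{lemma average log}); and the partition of $\F^\pm(X)$ by splitting type combined with~\eqref{equation TT local 2} for the prime-power sum (Proposition~\ref{proposition prime sum}), with the error bounded exactly as you indicate. The only point you gloss over is that truncating the geometric expansion of $1/N^\pm(X)$ and of the $X^{5/6}$-weighted prime sum produces an additional error of size $O(X^{-1/2+\sigma/6})$, which is absorbed into $O_\eps(X^{\theta-1+\sigma(\omega+1/2)+\eps})$ precisely because of the hypothesis $\theta\geq\frac12$ --- a detail worth flagging explicitly.
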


\begin{remark}
In the language of the Katz--Sarnak heuristics, the first and third terms on the right-hand side of~\eqref{equation main theorem} are a manifestation of the symplectic symmetry type of the family $\F^\pm(X)$. More precisely, one can turn~\eqref{equation main theorem} into an expansion in descending powers of $L$ using Lemma~\ref{one-level-2-term} as well as~\cite[Lemma 12.14]{MV}. The first result in this direction is due to Yang~\cite{Y}, who showed that under the condition $\sigma<\frac 1{50}$, we have that
\begin{equation}
 \frac 1{N^\pm(X)}\sumD \D_{\phi}(K) 
 =\widehat \phi(0)-\frac{\phi(0)}2+o_{X\rightarrow \infty}(1).
\label{Yang}
\end{equation}
This last condition was relaxed to $\sigma<\frac 4{41}$ by Cho--Kim~\cite{CK1,CK2}\footnote{In \cite{CK1}, the condition $\sigma < \frac{4}{25}$ should be corrected to $\sigma<\frac 4{41}$.} and Shankar--S\"odergren--Templier~\cite{SST1}, independently, and corresponds to the admissible values $\theta=\frac 79$ and $\omega=\frac {16}9$ in~\eqref{equation TT} and~\eqref{equation TT local 2} (see~\cite{TT}). In the recent paper~\cite{BTT}, Bhargava, Taniguchi and Thorne show that $\theta=\frac 23$ and $\omega=\frac 23$ are admissible, and deduce that~\eqref{Yang} holds as soon as $\sigma<\frac 2{7}$. Theorem~\ref{theorem main} refines these results by obtaining a power saving estimate containing lower-order terms for the left-hand side of~\eqref{Yang}. Note in particular that the fourth term on the right-hand side of~\eqref{equation main theorem} is of order $   X^{\frac{\sigma-1}6+o(1)}$ (see once more Lemma~\ref{one-level-2-term}). 
\end{remark}

The Katz--Sarnak heuristics are strongly linked with statistics of eigenvalues of random matrices, and have been successful in predicting the main term in many families. However, this connection does not encompass lower-order terms. The major tool for making predictions in this direction is the $L$-functions Ratios Conjecture of Conrey, Farmer and Zirnbauer~\cite{CFZ}. In particular, these predictions are believed to hold down to an error term of size roughly the inverse of the square root of the size of the family. As an example, consider the unitary family of Dirichlet $L$-functions modulo $q$, in which the Ratios Conjecture's prediction is particularly simple. It is shown in~\cite{G+} that if $ \eta$ is a real even Schwartz function for which $\widehat \eta$ has compact (but arbitrarily large) support, then this conjecture implies the estimate
\begin{multline}
   \frac 1{\phi(q)}\sum_{\chi \bmod q}\sum_{\gamma_{\chi}} \eta\Big(  \frac{\log q}{2\pi} \gamma_\chi\Big) = \widehat{\eta}(0) \Big(  1-\frac { \log(8\pi e^{\gamma})}{\log q}-\frac{\sum_{p\mid q}\frac{\log p}{p-1}}{\log q}\Big)  +\int_0^{\infty}\frac{\widehat{\eta}(0)-\widehat{\eta}(t)}{q^{\frac t2}-q^{-\frac t2}} dt + E(q),
   \end{multline}
   where $\rho_\chi=\frac 12+i\gamma_\chi$ is running through the non-trivial zeros of $L(s,\chi)$, and $E(q) \ll_{\eps} q^{-\frac 1 2+\eps}$. In~\cite{FM}, it was shown that this bound on $E(q)$ is essentially best possible in general, but can be improved when the support of $\widehat \eta$ is small. This last condition also results in improved error terms in various other families (see, for instance, \cite{MiSymplectic,MiOrthogonal,FPS1,FPS2,DFS}). 
     
Following the Ratios Conjecture recipe, we can obtain a prediction for the average of $\D_\phi(K)$ over the family $\F^\pm(X)$. The resulting conjecture, however, differs from Theorem~\ref{theorem main} by a term of order $X^{\frac{\sigma-1}6+o(1)} $, which is considerably larger than the expected error term $O_{\eps}(X^{-\frac 12 +\eps})$. We were able to isolate a specific step in the argument which could be improved in order to include this additional contribution. More precisely, modifying Step 4 in~\cite[Section 5.1]{CFZ},
we recover a refined Ratios Conjecture which predicts a term of order $X^{\frac{\sigma-1}6+o(1)}$, in agreement with Theorem~\ref{theorem main}.

\begin{theorem}
\label{theorem RC}
Let $\frac 12\leq \theta<\frac 56$ and $\omega\geq0 $ be such that~\eqref{equation TT local 2} holds. Assume Conjecture~\ref{ratios-thm} on the average of shifts of the logarithmic derivative of $\zeta_K(s)/\zeta(s)$, as well as the Riemann Hypothesis for $\zeta_K(s)$, for all $K\in \F^{\pm}(X)$.  Let $\phi$ be a real even Schwartz function such that $ \widehat \phi$ is compactly supported. Then we have the estimate
\begin{multline*}
\frac 1{N^\pm(X)}\sumD \sum_{\gamma_K}\phi \Big(\frac{L\gamma_K}{2\pi }\Big)=\widehat \phi(0)\Big(1 + \frac{  \log (4 \pi^2 e)  }{L} -\frac{C_2^\pm}{5C_1^\pm} \frac{X^{-\frac 16}}{L} 
+ \frac{(C_2^\pm)^2 }{5(C_1^\pm)^2 } \frac{X^{-\frac 13}}{L} \Big) \\+  \frac1{\pi}\int_{-\infty}^{\infty}\phi\Big(\frac{Lr}{2\pi}\Big)\Re\Big(\frac{\Ga'_{\pm}}{\Ga_{\pm}}(\tfrac12+ir)\Big)dr   -\frac{2}{L}\sum_{p,e}\frac{x_p\log p}{p^{\frac e2}}\widehat\phi\Big(\frac{\log p^e}{L}\Big) (\theta_e+\tfrac 1p) \\-\frac{2C_2^\pm X^{-\frac 16}}{C_1^\pm L}\Big( 1-\frac{C_2^\pm}{C_1^\pm}X^{-\frac{1}{6}}\Big) \sum_{p,e}\frac{\log p}{p^{\frac e2}}\widehat\phi\Big(\frac{\log p^e}{L}\Big) \beta_e(p)+J^\pm(X)
+ O_\eps(X^{\theta-1+\eps}),
\end{multline*}
where $J^\pm(X)$ is defined in \eqref{J X def}.
If $\sigma=\sup ( {\rm supp}(\widehat\phi)) <1$, then we have the estimate 
\begin{equation}\label{J X asymp form}
J^\pm(X) =  C^{\pm}   X^{-\frac 13} \int_{\mathbb R} \Big( \frac{X}{(2\pi e)^2 }\Big)^{\frac{\xi}6} \widehat \phi(\xi) d\xi +O_\eps( X^{\frac{\sigma-1}2+\eps}),
\end{equation}
 where $C^{\pm} $ is a nonzero absolute constant which is defined in~\eqref{equation definition C}. Otherwise, we have the identity
\begin{multline}
 J^\pm(X)=-\frac{1}{\pi i} \int_{(\frac 15)} \phi \Big(\frac{Ls}{2\pi i}\Big) \Big( 1-\frac{C_2^\pm}{C_1^\pm}X^{-\frac{1}{6}}\Big)   X^{-s}  \frac{\Ga_{\pm}(\frac 12-s)}{\Ga_{\pm}(\frac 12+s)}\zeta(1-2s) \frac{ A_3( -s, s) }{1-s} ds \\
 -\frac{1}{\pi i} \int_{(\frac 1{20})} \phi \Big(\frac{Ls}{2\pi i}\Big)\frac{C_2^\pm }{C_1^\pm} X^{-s-\frac 16} \frac{\Ga_{\pm}(\frac 12-s)}{\Ga_{\pm}(\frac 12+s)}\zeta(1-2s)  \Big\{ \Big( 1-\frac{C_2^\pm}{C_1^\pm}X^{-\frac{1}{6}}\Big)    \frac{\zeta(\tfrac 56-s)}{\zeta(\tfrac 56+s)}\frac{ A_4(-s,s)}{ 1- \frac{ 6s}5}
 \\+\frac{C_2^\pm}{C_1^\pm}X^{-\frac 16}  \frac{A_3( -s, s) }{1-s}  
 \Big\}ds,
 \label{equation definition J(X)}
\end{multline} 
where $A_3(-s,s)$ and $A_4(-s,s)$ are defined in~\eqref{A3 prod form} and~\eqref{equation definition A_4}, respectively.
\end{theorem}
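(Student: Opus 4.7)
My plan is to follow the classical Ratios Conjecture recipe for $1$-level densities, modified to accommodate the additional $X^{-1/6}$ contributions built into Conjecture~\ref{ratios-thm}. The starting point, justified by the Riemann Hypothesis for $\zeta_K(s)$, is the contour integral representation
\begin{equation*}
\sum_{\gamma_K}\phi\Big(\frac{L\gamma_K}{2\pi}\Big)=\frac{1}{2\pi i}\Big(\int_{(c)}-\int_{(1-c)}\Big)\phi\Big(\frac{L(s-\tfrac12)}{2\pi i}\Big)\frac{d}{ds}\log\frac{\zeta_K(s)}{\zeta(s)}\,ds
\end{equation*}
for some $c>1$. After averaging over $K\in\F^\pm(X)$, I would use the functional equation of $\zeta_K(s)/\zeta(s)$ to convert the $(1-c)$-integral into one on a line to the right of the critical line (which produces the archimedean $\Ga'_\pm/\Ga_\pm$ piece), and then substitute the formula provided by Conjecture~\ref{ratios-thm} for the averaged logarithmic derivative. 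The resulting integrand splits into a main diagonal piece encoding the arithmetic (via $A_3(-s,s)$) and a swap piece carrying the factor $X^{-s}\Ga_\pm(\tfrac12-s)\zeta(1-2s)/\Ga_\pm(\tfrac12+s)$, each appearing in two versions: one of standard size and one multiplied by the secondary-term factor $X^{-1/6}$ (and thus also involving $A_4$ besides $A_3$).

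Next I would shift the contour leftward across the critical line, collecting residues along the way. The pole of $\zeta(1-2s)$ at $s=0$ combines with the Taylor expansions of $\Ga_\pm$ and of $A_3, A_4$ there to yield the $\widehat\phi(0)$ coefficients $1+\log(4\pi^2e)/L-(C_2^\pm/5C_1^\pm)X^{-1/6}/L+(C_2^\pm/C_1^\pm)^2 X^{-1/3}/(5L)$. The archimedean contribution, handled by symmetrizing the $\Ga'_\pm/\Ga_\pm$ integral produced at the functional-equation step, yields the term $\frac{1}{\pi}\int\phi(Lr/(2\pi))\Re(\Ga'_\pm/\Ga_\pm(\tfrac12+ir))\,dr$. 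The simple poles of the logarithmic derivatives of $A_3(-s,s)$ and $A_4(-s,s)$ at $s=0$, arising from the $\zeta$-factors built into their Euler product definitions, produce the two explicit prime sums involving $x_p(\theta_e+1/p)$ and $\beta_e(p)$. What remains after the full contour shift are exactly the integrals on $\Re s=1/5$ and $\Re s=1/20$ that constitute $J^\pm(X)$ as written in~\eqref{equation definition J(X)}, and the error $O_\eps(X^{\theta-1+\eps})$ is inherited directly from Conjecture~\ref{ratios-thm}.

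Finally, for the asymptotic~\eqref{J X asymp form} when $\sigma<1$, I would shift the two inner contours in~\eqref{equation definition J(X)} leftward across $\Re s=0$; the hypothesis $\sigma<1$ guarantees sufficient decay of $\phi(Ls/(2\pi i))$ on the new contour so that the shifted integrals contribute $O_\eps(X^{(\sigma-1)/2+\eps})$. The residues at $s=0$ come from the pole of $\zeta(1-2s)$ there and yield contributions of sizes $X^{-1/6}, X^{-1/3}$ and $X^{-1/2}$; the precise combinations in~\eqref{equation definition J(X)} are arranged so that the $X^{-1/6}$ and $X^{-1/2}$ terms cancel between the two integrals, leaving only the $X^{-1/3}$ contribution. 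Applying Fourier inversion to rewrite $\phi(Ls/(2\pi i))$ as an integral of $\widehat\phi$ produces $C^\pm X^{-1/3}\int(X/(2\pi e)^2)^{\xi/6}\widehat\phi(\xi)\,d\xi$, where $C^\pm$ is an explicit combination of $A_3(0,0)$, the residue of $\zeta$, and the constants $C_1^\pm, C_2^\pm$. The main technical obstacle I foresee is the careful bookkeeping that confirms the various residues reassemble exactly into the explicit $\theta_e, \beta_e(p), x_p$ prime sums and the gamma-factor integral of the statement, together with the verification of the delicate cancellation in the last step that isolates the $X^{-1/3}$ survivor with the correct constant~$C^\pm$.
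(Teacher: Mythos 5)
Your plan diverges from the paper's actual argument at several key points, and at least one of these is a genuine structural error rather than an alternative route.

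First, you attribute the $\widehat\phi(0)$ coefficients (including the $X^{-1/6}/L$ and $X^{-1/3}/L$ pieces) to the pole of $\zeta(1-2s)$ at $s=0$ combined with Taylor expansions of $\Ga_\pm$, $A_3$ and $A_4$. This is not where they come from. In the paper, after the functional equation is applied to the $\Re(s)=-1/L$ integral, the term $\log|D_K|$ is separated out and averaged directly via Lemma~\ref{lemma average log}; it is the secondary term $C_2^\pm X^{5/6}$ in the cubic field count~\eqref{equation TT} that produces the $X^{-1/6}$ and $X^{-1/3}$ corrections to $\widehat\phi(0)$, entirely independently of the Ratios Conjecture and of $A_3$, $A_4$. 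Likewise, the two explicit prime sums are not residues of $A_3'/A_3$ or $A_4'/A_4$ at $s=0$: they appear verbatim in Conjecture~\ref{ratios-thm} and are converted to the Fourier-inverted form by integrating term-by-term and shifting each inner integral to $\Re(s)=0$.

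Second, and more seriously, your plan for proving~\eqref{J X asymp form} is to shift the contours in~\eqref{equation definition J(X)} \emph{leftward} across $\Re(s)=0$ and collect the residue of $\zeta(1-2s)$ there. This goes in the wrong direction and targets the wrong pole. Since $|X^{-s}\phi(Ls/(2\pi i))|\asymp X^{(\sigma-1)\Re(s)}/|s|^\ell$ with $\sigma<1$, shifting left makes the remaining integral \emph{larger}, so the method fails to produce a small error. The paper instead shifts \emph{rightward} to $\Re(s)=\tfrac12-\tfrac\eps2$, and the term $C^\pm X^{-1/3}\int(X/(2\pi e)^2)^{\xi/6}\widehat\phi(\xi)\,d\xi$ arises from the residue at $s=\tfrac16$, where $A_3(-s,s)$ has a simple pole (Lemma~\ref{lemma A_3 analytic continuation}) and $A_4(-s,s)$ has a double pole (Lemmas~\ref{lemma A_4 analytic continuation} and~\ref{lemma A4 limit at 16}). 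This pole at $s=\tfrac16$ is the crucial novelty of the whole analysis (it is precisely what makes the Ratios Conjecture prediction here deviate from the usual $O_\eps(X^{-1/2+\eps})$ error); a proof that never engages with it cannot recover the correct leading constant $C^\pm$. The "cancellation" you anticipate does occur, but it is the vanishing of the first line of the residue at $s=\tfrac16$ (first line of~\eqref{residue 16}), not a cancellation between residues at $s=0$.

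To repair the argument you would need to: (i) separate the $\log|D_K|$ and archimedean contributions via the functional equation before plugging in Conjecture~\ref{ratios-thm}, then average $\log|D_K|$ using~\eqref{equation TT}; (ii) handle the prime sums in the conjecture by term-by-term integration; and (iii) establish the meromorphic continuation of $A_3(-s,s)$ and $A_4(-s,s)$ across $\Re(s)=\tfrac16$, compute the residue there, and shift the $J^\pm$ integral rightward to $\Re(s)=\tfrac12-\tfrac\eps2$ to obtain the $C^\pm X^{-1/3}$ main term with error $O_\eps(X^{(\sigma-1)/2+\eps})$.
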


\begin{remark}
It is interesting to compare Theorem~\ref{theorem RC} with Theorem~\ref{theorem main}, especially when $\sigma$ is small. Indeed, for $\sigma<1$, the difference between those two evaluations of the $1$-level density is given by 
$$  C^\pm  X^{-\frac 13}\int_{\mathbb R} \Big( \frac{X}{(2\pi e)^2 }\Big)^{\frac{\xi}6} \widehat \phi(\xi) d\xi+O_\eps\big(X^{\frac{\sigma-1}{2}+\eps}+X^{\theta - 1 + \sigma(\omega + \frac12 ) +\eps}\big).$$
Selecting test functions $\phi$ for which $\widehat \phi \geq 0$ and $\sigma$ is positive but arbitrarily small, this shows that no matter how large $\omega$ is, any admissible $\theta<\frac 23$ in~\eqref{equation TT} and~\eqref{equation TT local 2} would imply that this difference is asymptotic to $C^\pm X^{-\frac 13}\int_{\mathbb R} ( \frac{X}{(2\pi e)^2 })^{\frac{\xi}6} \widehat \phi(\xi) d\xi \gg X^{-\frac 13}$. In fact, Roberts' numerics~\cite{Ro} (see also~\cite{B}), as well as our numerical investigations described in Appendix~\ref{appendix}, indicate that $\theta= \frac 12$  could be admissible in~\eqref{equation TT}
and~\eqref{equation TT local 2}. In other words, in this family, the Ratios Conjecture, as well as our refinement (combined with the assumption of~\eqref{equation TT} and~\eqref{equation TT local 2} for some $\theta<\frac 23$ and $\omega\geq0$), are not sufficient to obtain a prediction with precision $o(X^{-\frac 13})$. This is surprising, since Conrey, Farmer and Zirnbauer have conjectured this error term to be of size $O_\eps(X^{-\frac 12+\eps})$, and this has been confirmed in several important families~\cite{MiSymplectic,MiOrthogonal,FPS1,FPS2,DFS} (for a restricted set of test functions). 
\end{remark}

\section*{Acknowledgments}
We would like to thank Frank Thorne for inspiring conversations, and for sharing the preprint  \cite{BTT} with us. We also thank Keunyoung Jeong for providing us with preliminary computational results.
 The computations in this paper were carried out on a personal computer using pari/gp as well as Belabas'  CUBIC program. We thank Bill Allombert for his help regarding the latest development version of pari/gp.

\section{Background}
\label{section background}

Let $K/\Q$ be a non-Galois cubic field, and let $\widehat{K}$ be the Galois closure of $K$ over $\mathbb{Q}$. Then, the Dedekind zeta function of the field $K$ has the decomposition 
\begin{align*}
\zeta_K(s)  = \zeta(s) L(s,\rho,\widehat{K}/\mathbb{Q}),
\end{align*}
where $L(s,\rho,\widehat{K}/\mathbb{Q})$ is the Artin $L$-function associated to the two-dimensional representation $\rho$ of ${\rm Gal}(\widehat K/\Q) \simeq S_3$. The strong Artin conjecture is known for such representations; in this particular case we have an explicit underlying cuspidal representation $\tau$ of $GL_2/\mathbb{Q}$ such that $L(s,\rho,\widehat{K}/\mathbb{Q})=L(s,\tau)$.  For the sake of completeness, let us describe $\tau$ in more detail. Let $F=\mathbb{Q}[\sqrt{D_K}]$, and let $\chi$ be a non-trivial character of Gal$(\widehat{K}/F)\simeq C_3$, considered as a Hecke character of $F$. Then $\tau=\text{Ind}^{\mathbb Q}_{F} \chi$ is a dihedral representation of central character $\chi_{D_K}=\big(\frac {D_K}\cdot \big)$. When $D_K <0$, $\tau$ corresponds to a weight one newform of level $\left| D_K \right|$ and nebentypus $\chi_{D_K}$, and when $D_K>0$ it corresponds to a weight zero Maass form (see \cite[Introduction]{DFI}). In both cases, we will denote the corresponding automorphic form by $f_K$, and in particular we have the equality
$$L(s,\rho,\widehat{K}/\mathbb{Q}) = L(s,f_K).$$

We are interested in the analytic properties of $\zeta_K(s)/\zeta(s)=L(s,f_K)$. 
We have the functional equation 
\begin{equation}
\label{equation functional equation}
\Lambda(s,f_K)=\Lambda(1-s,f_K).
\end{equation}Here, 
$\Lambda(s,f_K):= |D_K|^{\frac s2}  \Ga_{f_K}(s) L(s,f_K)$ is the completed $L$-function, with the gamma factor
$$ \Ga_{f_K}(s)=\begin{cases} \Ga_+(s) & \text{ if } D_K>0 \text{ } (\text{that is } K \text{ has signature } (3,0) ); \\ \Ga_-(s) & \text{ if } D_K <  0 \text{ } (\text{that is } K \text{ has signature } (1,1) ) ,
\end{cases}$$
where $\Ga_+(s):=
\pi^{-s}\Gamma(\frac s2)^2$ and $\Ga_-(s):=
\pi^{-s}\Gamma(\frac s2)\Gamma(\frac {s+1}2)$.

The coefficients of $L(s,f_K)$ have an explicit description in terms of the splitting type of the prime ideal $(p)\mathcal O_{K}$. 
Writing 
$$ L(s,f_K) = \sum_{n =1}^\infty \frac{\lambda_K (n)}{n^s},$$
we have that
\begin{center}
\begin{tabular}{c|c|c}
Splitting type & $(p)\mathcal O_{K} $ & $\lambda_K(p^e)$\\
\hline
$T_1$ & $\p_1\p_2\p_3 $ & $ e+1$  \\
$T_2$ & $\p_1\p_2 $ & $(1+(-1)^e)/2$ \\
$T_3$ & $\p_1$ & $\tau_e$ \\
$T_4$ & $\p_1^2\p_2 $ & $1$ \\
$T_5$ & $\p_1^3$ & $0$, \\
\end{tabular}
\end{center}
where 
$$  \tau_e:= \begin{cases}
1 &\text{ if } e\equiv 0 \bmod 3; \\
-1 &\text{ if } e\equiv 1 \bmod 3; \\
0 &\text{ if } e\equiv 2 \bmod 3.
\end{cases} $$
Furthermore, we find that the coefficients of the reciprocal
\begin{equation}\label{lemobius}
 \frac{1}{L(s,f_K)} = \sum_{n=1}^\infty \frac{ \mu_K (n)}{n^s}
\end{equation}
are given by
\begin{equation*} 
\mu_{K}(p^k) =
\begin{cases}
-\lambda_K(p) & \mbox{~if~} k = 1;\\
\Big( \frac{D_K} p\Big) & \mbox{~if~} k = 2;\\
0 & \mbox{~if~} k > 2.
\end{cases}
\end{equation*}
The remaining values of $\lambda_K(n)$ and $\mu_K(n)$ are determined by multiplicativity.
Finally, the coefficients of the logarithmic derivative
 $$  -\frac{L'}{L}(s,f_K)=\sum_{n\geq 1} \frac{\Lambda(n)a_K(n)}{n^s}$$
 are given by
\begin{center}
\begin{tabular}{c|c|c}
Splitting type & $(p) $ & $a_K(p^e)$ \\
\hline
$T_1$ & $\p_1\p_2\p_3 $ & $ 2$ \\
$T_2$ & $\p_1\p_2 $ & $1+(-1)^e$ \\
$T_3$ & $\p_1$ & $\eta_e$ \\
$T_4$ & $\p_1^2\p_2 $ & $1$ \\
$T_5$ & $\p_1^3$ & $0$, \\
\end{tabular}
\end{center}
where 
$$ \eta_e:= \begin{cases}
2 &\text{ if } e \equiv 0 \bmod 3; \\
-1 &\text{ if }  e \equiv \pm 1 \bmod 3.
\end{cases} $$

We now describe explicitly the constants $A_p^\pm(T)$ and $B_p^\pm(T)$ that appear in~\eqref{equation TT local 2}. More generally, let $\vp = ( p_1 , \ldots , p_J)$ be a vector of primes, and let $\vk = ( k_1  ,\ldots , k_J )\in \{1,2,3,4,5\}^J $. (When $J=1$, $\vp=(p)$ is a scalar and we will abbreviate by writing $\vp=p$, and similarly for $\vk$.) We expect that
\begin{align}\label{equation TT local 2 vector form}
  N^\pm_{\vp} (X,T_\vk) : &=  \#\{ K \in \mathcal F^\pm(X) :  p_j \text{ has splitting type }T_{k_j} \text{ in }K \; (1\leq j\leq J)\} \notag
\\&  = A^\pm_\vp (T_\vk ) X +B^\pm_\vp(T_\vk) X^{\frac 56} + O_{\eps}((p_1 \cdots p_J )^{ \omega}X^{\theta+\eps}),
\end{align}  
for some $\omega \geq0$ and with the same $\theta$ as in \eqref{equation TT}.
Here,
$$ A^\pm_\vp ( T_\vk ) = C_1^\pm \prod_{j = 1}^J ( x_{p_j} c_{k_j} (p_j ) ),\hspace{1cm} B^\pm_\vp ( T_\vk ) = C_2^\pm \prod_{j = 1}^J ( y_{p_j} d_{k_j} (p_j ) ) ,$$
$$x_p:=\Big(1+\frac 1p+\frac 1{p^2}\Big)^{-1}, \hspace{2cm} y_p:=\frac{1-p^{-\frac 13}}{(1-p^{-\frac 53})(1+p^{-1})}, $$
and  $c_k(p)$ and $d_k(p)$ are defined in the following table:
\begin{center}
\begin{tabular}{c|c|c}
$k$ & $c_k(p)$ & $d_k(p)$ \\
\hline
$1$ & $\frac 16 $ & $\frac{(1+p^{-\frac 13})^3}6$ \\
$2$ & $\frac 12 $ & $\frac{(1+p^{-\frac 13})(1+p^{-\frac 23})}{2} $ \\
$3$ & $\frac 13$ & $\frac{(1+p^{-1})}{3} $ \\
$4$ & $\frac 1{p} $ & $\frac{(1+p^{-\frac 13})^2}{p} $ \\
$5$ & $\frac 1{p^2}$ & $\frac{(1+p^{-\frac 13})}{p^2}  .$ 
\end{tabular}
\end{center}
Recently, Bhargava, Taniguchi and Thorne~\cite{BTT} have shown that the values $\theta=\omega=\frac{2}3$ are admissible in~\eqref{equation TT local 2 vector form}.

\section{New lower-order terms in the $1$-level density}

In this section we shall estimate the 1-level density
$$ \frac 1{N^\pm(X)}\sumD \D_{\phi}(K)   $$
assuming  the cubic field count~\eqref{equation TT local 2}  for some fixed parameters  $\frac 12\leq \theta <\frac 56$ and $ \omega\geq 0$. Throughout the paper we will use the shorthand 
$$ L =\log \Big(\frac X{(2\pi e)^2}\Big). $$ 
The starting point of this section is the explicit formula. 

\begin{lemma}
\label{lemma explicit formula}
Let $\phi$ be a real even Schwartz function whose Fourier transform is compactly supported, and let $K\in \F^\pm (X)$. We have the formula
\begin{equation}\label{explicit formula}\begin{split}
 \D_\phi(K) = \sum_{\gamma_K}\phi\Big(\frac{L\gamma_K}{2\pi}\Big)=& \frac{\widehat\phi(0)}{L } \log{|D_K|} +  \frac1{\pi}\int_{-\infty}^{\infty}\phi\Big(\frac{Lr}{2\pi}\Big)\Re\Big(\frac{\Ga'_{\pm}}{\Ga_{\pm}}(\tfrac12+ir)\Big)dr \\
& -\frac2{L }\sum_{n=1}^{\infty}\frac{\Lambda(n)}{\sqrt n}\widehat\phi\Big(\frac{\log n}{L}\Big)  a_K(n),
\end{split}\end{equation}
where $\rho_K=\frac 12+i\gamma_K$ runs over the non-trivial zeros of $L(s,f_K)$. 
\end{lemma}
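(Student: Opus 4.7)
The plan is to apply the standard Weil--Mestre explicit formula to the entire completed $L$-function $\Lambda(s,f_K)$. Since $\widehat{\phi}$ is compactly supported, the Paley--Wiener theorem guarantees that $\Phi(s) := \phi\bigl(L(s-\tfrac12)/(2\pi i)\bigr)$ extends to an entire function of $s$ with super-polynomial decay on every vertical strip. Cuspidality of $f_K$ ensures that $\Lambda(s,f_K)$ is entire, so the logarithmic derivative $\Lambda'/\Lambda(s,f_K)$ has only simple poles, located precisely at the non-trivial zeros $\rho_K=\tfrac12+i\gamma_K$, with residues equal to the respective multiplicities.

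First I would apply the residue theorem to
\[
  \frac{1}{2\pi i}\oint_{\partial R_T}\Phi(s)\,\frac{\Lambda'}{\Lambda}(s,f_K)\,ds,
\]
where $R_T$ is a tall rectangle with vertical sides $\Re(s)=c$ and $\Re(s)=1-c$ for some fixed $c>1$, taking $T\to\infty$. The horizontal contributions vanish by rapid decay of $\Phi$, and the contour integral equals $\sum_{\gamma_K}\phi(L\gamma_K/(2\pi))$. Invoking the functional equation \eqref{equation functional equation} together with $\Phi(1-s)=\Phi(s)$ (inherited from $\phi$ being even) converts the integral on $\Re(s)=1-c$ into a copy of the integral on $\Re(s)=c$, so with the counterclockwise orientation the sum of zeros equals twice the integral over $\Re(s)=c$.

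On this right-hand line I would split
\[
  \frac{\Lambda'}{\Lambda}(s,f_K)=\tfrac12\log|D_K|+\frac{\Ga_\pm'}{\Ga_\pm}(s)+\frac{L'}{L}(s,f_K)
\]
and handle each piece separately. The constant term contributes $\widehat{\phi}(0)\log|D_K|/L$ after shifting to $\Re(s)=\tfrac12$ and making the elementary change of variables $s=\tfrac12+ir$, $r=2\pi u/L$. The gamma term may be shifted to the critical line since $\Ga_\pm'/\Ga_\pm$ is holomorphic on $\tfrac12\leq\Re(s)\leq c$; the symmetries $\phi(-r)=\phi(r)$ and $\Ga_\pm'/\Ga_\pm(\tfrac12-ir)=\overline{\Ga_\pm'/\Ga_\pm(\tfrac12+ir)}$ then reduce the integrand to its real part. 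For the Dirichlet series term, absolute convergence on $\Re(s)=c>1$ allows expanding $-L'/L(s,f_K)=\sum_n\Lambda(n)a_K(n)n^{-s}$, interchanging sum and integral, shifting each resulting integral to $\Re(s)=\tfrac12$, and computing it by the same change of variables to produce $\tfrac{2}{L\sqrt n}\widehat\phi(\log n/L)$.

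This is a routine implementation of the explicit formula adapted to the $GL_2$ $L$-function $L(s,f_K)$, and I do not expect any substantive obstacle. The only technical points worth verifying carefully are the super-polynomial decay of $\Phi$ on vertical strips (needed both to close the rectangle and to justify interchanging summation and integration in the Dirichlet series step) and the statement that $\Lambda(s,f_K)$ is entire, which is immediate from the cuspidality of $f_K$ already recalled in Section~\ref{section background}.
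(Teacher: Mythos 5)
Your proposal is correct and follows essentially the same route as the paper: express $\D_\phi(K)$ as a contour integral of $\Phi\cdot\Lambda'/\Lambda$ over two vertical lines, use the functional equation together with $\Phi(1-s)=\Phi(s)$ to fold the left line onto the right, split $\Lambda'/\Lambda$ into the discriminant, gamma, and Dirichlet-series pieces, and shift each to $\Re(s)=\tfrac12$ to evaluate via Fourier inversion. The only cosmetic differences are that the paper fixes $c=\tfrac32$ rather than a general $c>1$ and states the contour argument directly as a difference of two line integrals rather than as a limiting rectangle.
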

\begin{proof}
This follows from e.g.~\cite[Proposition 2.1]{RS}, but for the sake of completeness we reproduce the proof here. By Cauchy's integral formula, we have the identity
\begin{multline*}
 \sum_{\gamma_K}\phi\Big(\frac{L\gamma_K}{2\pi}\Big)=\frac{1}{2\pi i} \int_{(\frac 32)}\phi\left( \frac{L}{2\pi i} \left( s - \frac{1}{2} \right)\right) \frac{\Lambda'}{\Lambda}(s,f_K)ds \\-\frac{1}{2\pi i} \int_{(-\frac 12)}\phi\left( \frac{L}{2\pi i} \left( s - \frac{1}{2} \right)\right) \frac{\Lambda'}{\Lambda}(s,f_K) ds.
\end{multline*}
These integrals converge since $\phi\left(\frac{L}{2 \pi i}\left( s-\frac{1}{2} \right) \right)$ is rapidly decreasing in vertical strips.
For the second integral, we apply the change of variables $s \rightarrow 1-s$. Then, by the functional equation in the form $ \frac{\Lambda'}{\Lambda}(1-s,f_K)=-\frac{\Lambda'}{\Lambda}(s,f_K)$ and since $\phi(-s)=\phi(s)$, we deduce that
\begin{align*}
 \sum_{\gamma_K}\phi\Big(\frac{L\gamma_K}{2\pi}\Big)=\frac{1}{\pi i} \int_{(\frac 32)} \phi\left( \frac{L}{2\pi i} \left( s - \frac{1}{2} \right)\right) \frac{\Lambda'}{\Lambda}(s,f_K)ds.  
\end{align*}
Next, we insert the identity $$ \frac{\Lambda'}{\Lambda}(s,f_K)=\frac{1}{2}\log |D_K| + \frac{\Gamma_{f_K}'}{\Gamma_{f_K}}(s) - \sum_{n\geq 1}\frac{\Lambda(n)a_K(n)}{n^s}$$ and separate into three integrals. By shifting the contour of integration to $\Re(s)=\frac12$ in the first two integrals, we obtain the first two terms on the right-hand side of~\eqref{explicit formula}. The third integral is equal to
\begin{align*}
-2\sum_{n\geq 1} \frac{\Lambda(n)a_K(n)}{\sqrt{n}}\frac{1}{2\pi i} \int_{(\frac 32)} \phi\left( \frac{L}{2\pi i} \left( s - \frac{1}{2} \right)\right)n^{-(s-\frac 12)}ds.
\end{align*}
By moving the contour to $\Re(s)=\frac 12$ and applying Fourier inversion, we find the third term on the right-hand side of~\eqref{explicit formula} and the claim follows.   
\end{proof}

Our goal is to average \eqref{explicit formula} over $K \in \F^\pm(X)$. We begin with the first term. 

\begin{lemma}
\label{lemma average log}
Assume that \eqref{equation TT} holds for some $0\leq\theta<\frac 56$. Then, we have the estimate
 $$\frac{1}{ N^\pm(X)}\sumD    \log{|D_K|}    =  \log X  -   1  -\frac{C_2^\pm}{5C_1^\pm}  X^{-\frac 16}   +  \frac{(C_2^\pm)^2}{ 5  (C_1^\pm)^2 }     X^{- \frac13}   +O_{\eps}(X^{\theta-1+\eps}+X^{-\frac 12}).$$
\end{lemma}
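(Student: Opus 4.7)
The plan is a straightforward Abel summation argument. Since the counting function $N^\pm(t)$ vanishes for $t$ below the smallest cubic discriminant, partial summation yields
$$\sumD \log |D_K| = N^\pm(X)\log X - \int_1^X \frac{N^\pm(t)}{t}\, dt.$$
Inserting the asymptotic~\eqref{equation TT} into the integrand, and evaluating $\int_1^X t^{-1/6}\, dt = \tfrac{6}{5}X^{5/6} + O(1)$ together with $\int_1^X t^{\theta-1+\eps}\, dt = O_\eps(X^{\theta+\eps})$, I would obtain
$$\sumD \log |D_K| = \bigl(C_1^\pm X + C_2^\pm X^{5/6}\bigr)\log X - C_1^\pm X - \frac{6C_2^\pm}{5} X^{5/6} + O_\eps\!\bigl(X^{\theta+\eps}\log X\bigr).$$

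Next, I would divide by $N^\pm(X) = C_1^\pm X + C_2^\pm X^{5/6} + O_\eps(X^{\theta+\eps})$. Setting $\alpha := C_2^\pm/C_1^\pm$, the $(C_1^\pm X + C_2^\pm X^{5/6})\log X$ piece contributes $\log X + O_\eps(X^{\theta-1+\eps}\log X)$ by~\eqref{equation TT}. For the remaining piece, expanding the ratio via the geometric series $(1+y)^{-1} = 1 - y + y^2 + O(|y|^3)$ with $y = \alpha X^{-1/6} + O_\eps(X^{\theta-1+\eps})$ gives
$$\frac{C_1^\pm X + \tfrac{6C_2^\pm}{5}X^{5/6}}{N^\pm(X)} = \bigl(1 + \tfrac{6}{5}\alpha X^{-1/6}\bigr)\bigl(1 - \alpha X^{-1/6} + \alpha^2 X^{-1/3}\bigr) + O\!\bigl(X^{-1/2} + X^{\theta-1+\eps}\bigr).$$
Collecting terms at each power of $X^{-1/6}$, the coefficient of $X^{-1/6}$ works out to $\tfrac{6\alpha}{5} - \alpha = \tfrac{\alpha}{5}$, and the coefficient of $X^{-1/3}$ to $\alpha^2 - \tfrac{6\alpha^2}{5} = -\tfrac{\alpha^2}{5}$; the telltale factor $\tfrac{1}{5}$ arises from the partial cancellation between the $\tfrac{6}{5}\alpha$ coming from the integrated main term and the $-\alpha$ from the reciprocal of $N^\pm(X)$. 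Substituting back $\alpha = C_2^\pm/C_1^\pm$ and subtracting from $\log X$ produces exactly the four asserted main terms, while the $\log X$ attached to the error absorbs into $O_\eps(X^{\theta-1+\eps})$ after a slight enlargement of $\eps$.

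There is no substantive obstacle; the only mild point of care is to carry the geometric-series expansion out three terms rather than two, so as to see the $X^{-1/3}$ contribution with the correct sign and coefficient, and thus match the stated error $O_\eps(X^{\theta-1+\eps} + X^{-1/2})$.
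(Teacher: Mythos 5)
Your proof is correct and follows essentially the same route as the paper: partial summation to reduce to $N^\pm(X)\log X - \int_1^X N^\pm(t)\,t^{-1}\,dt$, then substitution of~\eqref{equation TT} and a geometric-series expansion of $1/N^\pm(X)$ to two orders. The only cosmetic difference is that you expand $N^\pm(X)\log X$ via~\eqref{equation TT} before dividing (picking up a harmless $\log X$ in the error), whereas the paper keeps $N^\pm(X)\log X - N^\pm(X)$ intact so that the leading $\log X - 1$ falls out immediately upon division.
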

\begin{proof}
Applying partial summation, we find that
\begin{align*}
\sumD \log |D_K| = \int_1^X (\log t) dN^\pm(t) =  N^\pm(X)\log X-N^\pm(X)-\frac 15 C_2^\pm X^{\frac 56} +O_{\eps}(X^{\theta+\eps}). 
\end{align*}
The claimed estimate follows from applying~\eqref{equation TT}.
\end{proof}

For the second term of~\eqref{explicit formula}, we note that it is constant on $\F^\pm(X)$. We can now concentrate our efforts on the average of the third (and most crucial) term
\begin{equation}
\label{equation definitino I}
 I^{\pm}(X;\phi) := -\frac2{LN^\pm(X) }\sum_{p} \sum_{e=1}^\infty \frac{\log p }{p^{e/2}} \widehat\phi\Big(\frac{e \log p}{L}\Big)   \sumD  a_K(p^e ). 
\end{equation}
It follows from~\eqref{equation TT local 2} that
\begin{align} \notag
\sumD a_K(p^e)&=2N^\pm_p(X,T_1)+(1+(-1)^e)N^\pm_p(X,T_2)+ \eta_e N^\pm_p(X,T_3)+N^\pm_p(X,T_4)\\
&= C_1^\pm X (\theta_e+\tfrac 1p)x_p+ C_2^\pm X^{\frac 56} (1+p^{-\frac 13})(\kappa_e(p)+p^{-1}+p^{-\frac 43})y_p +O_{\eps}(p^{\omega }X^{\theta +\eps}), \label{sum p e 1}
\end{align}
where
\begin{equation}
\theta_e:=\delta_{2\mid e}+\delta_{3\mid e}=\begin{cases}
2 &\text{ if } e\equiv 0 \bmod 6 \\
0 &\text{ if } e\equiv 1 \bmod 6 \\
1 &\text{ if } e\equiv 2 \bmod 6 \\
1 &\text{ if } e\equiv 3 \bmod 6 \\
1 &\text{ if } e\equiv 4 \bmod 6 \\
0 &\text{ if } e\equiv 5 \bmod 6, 
\end{cases}
\label{definition theta_e}
\end{equation} 
and
\begin{equation}
\kappa_e(p):=(\delta_{2\mid e}+\delta_{3\mid e})(1+p^{-\frac 23}) + (1-\delta_{3\mid e})p^{-\frac 13}=\begin{cases}
2+2p^{-\frac 23} &\text{ if } e\equiv 0 \bmod 6 \\
p^{-\frac 13} &\text{ if } e\equiv 1 \bmod 6 \\
1+p^{-\frac 13}+p^{-\frac 23} &\text{ if } e\equiv 2 \bmod 6 \\
1+p^{-\frac 23} &\text{ if } e\equiv 3 \bmod 6 \\
1+p^{-\frac 13}+p^{-\frac 23} &\text{ if } e\equiv 4 \bmod 6 \\
p^{-\frac 13} &\text{ if } e\equiv 5 \bmod 6.
\end{cases}
\label{definition kappa_e}
\end{equation} 
Here, $ \delta_{\mathcal P}  $ is equal to $1$ if $ \mathcal P$ is true, and is equal to $0$ otherwise. Note that we have the symmetries $\theta_{-e} = \theta_e$ and $\kappa_{-e}(p) = \kappa_e(p)$. With this notation, we prove the following proposition. 

\begin{proposition}
\label{proposition prime sum}
Let $\phi$ be a real even Schwartz function for which $\widehat \phi$ has compact support and let $\sigma:=\sup({\rm supp}(\widehat \phi))$. Assume that~\eqref{equation TT local 2} holds for some fixed parameters $0\leq  \theta <\frac 56$ and $\omega\geq 0$. Then we have the estimate
\begin{multline*}
  I^{\pm}(X;\phi)= -\frac{2}{L}\sum_{p,e}\frac{x_p\log p}{p^{\frac e2}}\widehat\phi\Big(\frac{\log p^e}{L}\Big) (\theta_e+\tfrac 1p) \\
    + \frac{2}{L}\bigg( -  \frac{ C_2^\pm }{C_1^\pm  }X^{-\frac 16} +   \frac{ ( C_2^\pm )^2 }{(C_1^\pm )^2  }X^{-\frac 13} \bigg) \sum_{p,e}\frac{\log p}{p^{\frac e2}}\widehat\phi\Big(\frac{\log p^e}{L}\Big) \beta_e(p) +O_{\eps}(X^{\theta - 1 + \sigma(\omega + \frac12 ) +\eps}+X^{-\frac 12+\frac{\sigma}6}), 
  \end{multline*}
where 
\begin{equation}
\beta_e(p):=y_p (1+p^{-\frac 13})(\kappa_e(p) +p^{-1}+p^{-\frac 43})-x_p(\theta_e+\tfrac 1p).
\label{eqaution definition beta}
\end{equation}
\end{proposition}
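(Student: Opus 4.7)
My plan is to substitute~\eqref{sum p e 1} for $\sumD a_K(p^e)$ into~\eqref{equation definitino I}, and to expand $1/N^\pm(X)$ asymptotically using~\eqref{equation TT}. Writing $\alpha:=C_2^\pm/C_1^\pm$, $A:=x_p(\theta_e+\tfrac1p)$, and $B:=y_p(1+p^{-1/3})(\kappa_e(p)+p^{-1}+p^{-4/3})$, equation~\eqref{sum p e 1} reads
$$\sumD a_K(p^e)=C_1^\pm XA+C_2^\pm X^{5/6}B+O_\eps(p^\omega X^{\theta+\eps}),$$
and from~\eqref{eqaution definition beta} we have $B=A+\beta_e(p)$. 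A geometric series expansion of~\eqref{equation TT} yields
$$\frac{1}{N^\pm(X)}=\frac{1}{C_1^\pm X}\bigl(1-\alpha X^{-1/6}+\alpha^2 X^{-1/3}+O(X^{-1/2})+O_\eps(X^{\theta-1+\eps})\bigr).$$

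Multiplying these two expansions and regrouping via $B=A+\beta_e(p)$, the main terms combine to give
$$\frac{\sumD a_K(p^e)}{N^\pm(X)}=x_p(\theta_e+\tfrac1p)+\beta_e(p)\bigl(\alpha X^{-1/6}-\alpha^2 X^{-1/3}\bigr)+\text{errors}.$$
Inserting this into~\eqref{equation definitino I} produces the two main-term sums in the statement, with the correct signs and constants coming from the overall factor $-2/L$ in the definition of $I^\pm(X;\phi)$.

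It remains to estimate three sources of error. First, the $O_\eps(p^\omega X^{\theta+\eps})$ from~\eqref{equation TT local 2}, once divided by $N^\pm(X)$ and summed against $\log p\cdot\widehat\phi(\log p^e/L)/p^{e/2}$, is controlled via the support condition $p^e\le (X/(2\pi e)^2)^\sigma$. Partial summation with the prime number theorem yields $\sum_{p^e\le X^\sigma}\log p\cdot p^{\omega-e/2}\ll X^{\sigma(\omega+\frac12)}$ (the $e=1$ term dominates since $\omega\ge 0$), producing a contribution $O_\eps(X^{\theta-1+\sigma(\omega+\frac12)+\eps})$. Second, the relative error $O_\eps(X^{\theta-1+\eps})$ from the geometric expansion of $1/N^\pm(X)$, paired with the bounded main-term factor $A$, is absorbed into the same bound.

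The third and subtlest source is the $O(X^{-1/2})$ tail of the geometric expansion, which multiplies $\beta_e(p)$. A naive bound $\beta_e(p)=O(1)$ combined with $\sum_{p^e\le X^\sigma}\log p/p^{e/2}\ll X^{\sigma/2}$ only yields $O(X^{-1/2+\sigma/2})$, which is too weak. The crucial observation is that the constant terms of $A$ and $B$ as power series in $p^{-1/3}$ both equal $\theta_e$: indeed $A=\theta_e+O(p^{-1})$, and a short expansion using $y_p=1-p^{-1/3}+O(p^{-1})$ together with $\kappa_e(p)=\theta_e+(1-\delta_{3\mid e})p^{-1/3}+O(p^{-2/3})$ gives $B=\theta_e+(1-\delta_{3\mid e})p^{-1/3}+O(p^{-2/3})$. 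Hence $\beta_e(p)\ll p^{-1/3}$ uniformly in $e$, and combined with the prime number theorem estimate $\sum_{p\le Y}\log p/p^{5/6}\ll Y^{1/6}$ this produces the claimed error $O(X^{-1/2+\sigma/6})$. Verifying this precise cancellation between the data $c_k(p)$ and $d_k(p)$ encoded in $\beta_e(p)$ is the main (though elementary) obstacle.
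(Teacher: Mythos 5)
Your approach is essentially the same as the paper's: substitute~\eqref{sum p e 1} into~\eqref{equation definitino I}, expand $1/N^\pm(X)$ as a truncated geometric series, and control the truncation errors via the $p^{-1/3}$ decay of the relevant $e=1$ coefficient. The only cosmetic difference is that the paper isolates the key fact as the table entry $\kappa_1(p)=p^{-1/3}$, while you package it (together with the cancellation of the $\theta_e$ constant terms between $A$ and $B$ for $e\geq 2$) into the uniform bound $\beta_e(p)\ll p^{-1/3}$; both yield the same $O(X^{-\frac12+\frac{\sigma}{6}})$ error.
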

\begin{proof}

Applying~\eqref{sum p e 1}, we see that
\begin{multline*}
 I^{\pm}(X;\phi) =  -\frac{2C_1^\pm X}{LN^\pm(X)}\sum_p \sum_{e=1}^\infty \frac{x_p\log p}{p^{\frac e2}}\widehat\phi\Big(\frac{\log p^e}{L}\Big) (\theta_e+\tfrac 1p)\\ 
 -  \frac{2C_2^\pm X^{\frac 56}}{LN^\pm(X)} \sum_p \sum_{e=1}^\infty \frac{y_p(1+p^{-\frac 13})\log p}{p^{\frac e2}}\widehat\phi\Big(\frac{\log p^e}{L}\Big) (\kappa_e(p) +p^{-1}+p^{-\frac 43}) \\
+O_{\eps}\Big(X^{\theta - 1 +\eps}\sum_{\substack{p^e \leq X^\sigma \\ e\geq 1}}p^{\omega -\frac e2}\log p \Big)\\
 =  -\frac{2 }{L }\bigg( 1 - \frac{ C_2^\pm}{C_1^\pm} X^{- \frac16} + \frac{ (C_2^\pm)^2 }{(C_1^\pm)^2 } X^{- \frac13} \bigg)  \sum_p \sum_{e=1}^\infty \frac{x_p\log p}{p^{\frac e2}}\widehat\phi\Big(\frac{\log p^e}{L}\Big) (\theta_e+\tfrac 1p)\\ 
 - \frac{2}{L}  \bigg(  \frac{C_2^\pm }{C_1^\pm}X^{-\frac 16}  - \frac{ (C_2^\pm)^2 }{(C_1^\pm)^2 } X^{- \frac13}  \bigg) \sum_p \sum_{e=1}^\infty \frac{y_p(1+p^{-\frac 13})\log p}{p^{\frac e2}}\widehat\phi\Big(\frac{\log p^e}{L}\Big) (\kappa_e(p) +p^{-1}+p^{-\frac 43}) \\ +O_{\eps}\big(X^{\theta - 1 + \sigma(\omega + \frac12 ) +\eps} +X^{-\frac 12+\frac{\sigma}6}\big).
\end{multline*}
 Note in particular that the error term $O(X^{-\frac 12+\frac{\sigma}6})$ bounds the size of the contribution of the first omitted term in the expansion of $X^{\frac56}/N^{\pm}(X)$ appearing in the second double sum above. Indeed, this follows since $\kappa_1(p)=p^{-\frac13}$ and 
\begin{equation*}
X^{-\frac12}\sum_{p\leq X^{\sigma}}\frac{\log p}{p^{\frac56}}=O\big(X^{-\frac 12+\frac{\sigma}6}\big).
\end{equation*} 
The claimed estimate follows. 
\end{proof}

\begin{proof}[Proof of Theorem~\ref{theorem main}]
Combine Lemmas~\ref{lemma explicit formula} and ~\ref{lemma average log} with Proposition~\ref{proposition prime sum}.
\end{proof}

We shall estimate $ I^{\pm}(X;\phi)$ further, and find asymptotic expansions for the double sums in Proposition~\ref{proposition prime sum}.
 
\begin{lemma} \label{one-level-2-term} 
Let $\phi$ be a real even Schwartz function whose Fourier transform is compactly supported, define $\sigma:=\sup({\rm supp}(\widehat \phi))$, and let $\ell$ be a positive integer. Define
$$ I_1(X;\phi) :=   \sum_{p}\sum_{e=1}^\infty  \frac{x_p\log p}{p^{\frac e2}}\widehat\phi\Big(\frac{\log p^e}{L}\Big) (\theta_e+\tfrac 1p)  , \qquad  I_2(X;\phi) :=   \sum_{p}\sum_{e=1}^\infty  \frac{\log p}{p^{\frac e2}}\widehat\phi\Big(\frac{\log p^e}{L}\Big) \beta_e(p) .$$
Then, we have the asymptotic expansion
$$
I_1(X;\phi)  =   \frac{\phi (0)}{4} L      + \sum_{n=0 }^\ell  \frac{ \widehat{\phi}^{(n)} (0)  \nu_1 (n) }{ n!}  \frac{ 1 }{L^n }      + O_{\ell} \Big(   \frac{1 }{L^{\ell+1} }  \Big), 
$$
where 
\begin{multline*} \nu_1 (n )   :=   \delta_{n=0} +  \sum_{p}\sum_{  e \neq 2 }   \frac{x_p e^n ( \log p)^{n+1} }{p^{\frac e2}} (\theta_e+\tfrac 1p) + \sum_p  \frac{ 2^n  ( \log p)^{n+1}  }{p} \Big( x_p\Big( 1 + \frac1p\Big) -1 \Big)   \\
  +   \int_1^\infty  \frac{2^n ( \log u)^{n-1} ( \log u - n )  }{u^2}     \mathcal{R} (u) du 
\end{multline*}
with $ \mathcal{R}(u) := \sum_{ p \leq u } \log p -  u $. Moreover, we have the estimate
$$I_2(X;\phi)  =    L \int_0^\infty \widehat{\phi} ( u)  e^{\frac{Lu}6} du +  O\Big( X^{\frac \sigma 6} e^{-c_0(\sigma)\sqrt{\log X}}\Big),$$
where $c_0(\sigma)>0$ is a constant. Under RH, we have the more precise expansion
$$I_2(X;\phi)  =    L \int_0^\infty \widehat{\phi} ( u)  e^{\frac{Lu}{6}} du +  \sum_{n=0}^\ell  \frac{ \widehat{\phi}^{(n)} (0)  \nu_2 (n) }{ n!}  \frac{ 1 }{L^n }      + O_{\ell} \Big(   \frac{1 }{L^{\ell+1} }  \Big), $$
where
\begin{multline*}  
\nu_2 (n )   :=    \delta_{n=0}  + \sum_{p}\sum_{e=2}^\infty  \frac{e^n (\log p )^{n+1}  \beta_e(p) }{p^{\frac e2}} +
\sum_{p}   \frac{(\log p)^{n+1}}{p^{\frac 12}}   \Big(  \beta_1 (p) -  \frac{1}{p^{\frac13}} \Big)\\
+ \int_1^{\infty}  \frac{( \log u)^{n-1} ( 5 \log u - 6n)  }{6 u^{\frac{11}{6} }}    \mathcal{R} (u) du. 
\end{multline*} 
\end{lemma}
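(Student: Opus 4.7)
The plan is to treat $I_1$ and $I_2$ in parallel. In each case, a single prime-power diagonal contributes a divergent ``main term'' that I will extract by partial summation against the Chebyshev function $\theta(u) = u + \mathcal{R}(u)$, while all remaining contributions are absolutely convergent and will be handled by Taylor-expanding $\widehat\phi$ at the origin.

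For $I_1$, the only divergent contribution comes from $e=2$, where $\theta_2 = 1$ produces the sum $\sum_p \frac{x_p(1+1/p)\log p}{p}\widehat\phi\bigl(\frac{2\log p}{L}\bigr)$. Using $x_p(1+1/p) - 1 = O(p^{-2})$, I will split off the corrected Mertens-type sum $\sum_p \frac{\log p}{p}\widehat\phi\bigl(\frac{2\log p}{L}\bigr)$, treating the convergent remainder by direct Taylor expansion. Partial summation converts the main sum into $\int_1^\infty \frac{\widehat\phi(2\log u/L)}{u} du + \int_1^\infty \frac{\widehat\phi(2\log u/L)}{u} d\mathcal{R}(u)$. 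Substituting $v = 2\log u/L$ in the first integral and applying Fourier inversion $\int_0^\infty \widehat\phi(v)dv = \frac{\phi(0)}{2}$ (from $\widehat\phi$ being even) yields the main term $\frac{\phi(0)L}{4}$. One integration by parts on the $\mathcal{R}$-integral produces the boundary contribution $\widehat\phi(0)$ (from $\mathcal{R}(1) = -1$) plus $\int_1^\infty \frac{\mathcal{R}(u)}{u^2}\bigl(\widehat\phi(\frac{2\log u}{L}) - \frac{2}{L}\widehat\phi'(\frac{2\log u}{L})\bigr)du$; expanding the integrand to order $\ell$ and matching coefficients of $\widehat\phi^{(n)}(0)/(n!L^n)$ reproduces exactly the integral term in $\nu_1(n)$, while the boundary $\widehat\phi(0)$ matches $\delta_{n=0}$. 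The $e \neq 2$ terms all give absolutely convergent prime sums (for $e=1$ because $\theta_1 = 0$, and for $e \geq 3$ by the $p^{-e/2}$ factor), and direct Taylor expansion supplies the remaining summands in $\nu_1(n)$.

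The argument for $I_2$ is structurally identical with a different distinguished diagonal. A short expansion of $y_p$ and $x_p$ in powers of $p^{-1/3}$ gives $\beta_1(p) = p^{-1/3} + O(p^{-1})$, so $e=1$ is now the divergent case; I will split $\beta_1(p) = p^{-1/3} + (\beta_1(p) - p^{-1/3})$ to isolate the sum $\sum_p \frac{\log p}{p^{5/6}}\widehat\phi\bigl(\frac{\log p}{L}\bigr)$. Partial summation combined with the substitution $v = \log u/L$ gives the main term $L\int_0^\infty \widehat\phi(v) e^{Lv/6}dv$, together with the boundary $\widehat\phi(0)$ and the error integral $\int_1^\infty \frac{\mathcal{R}(u)}{u^{11/6}}\bigl(\frac{5}{6}\widehat\phi(\frac{\log u}{L}) - \frac{1}{L}\widehat\phi'(\frac{\log u}{L})\bigr)du$. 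Since $\beta_e(p) = O(p^{-1/3})$ for $e=2$ and $O(p^{-1})$ for $e \geq 3$, and $\beta_1(p) - p^{-1/3} = O(p^{-1})$, every remaining contribution is absolutely convergent and admits straightforward Taylor expansion, yielding the remaining terms of $\nu_2(n)$.

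The main technical subtlety, and the source of the bifurcation between the unconditional and conditional $I_2$ statements, lies in the $\mathcal{R}$-integral. For $I_1$ the factor $u^{-2}$ absorbs even the trivial bound $\mathcal{R}(u) = O(u)$, and the Taylor tail is controlled by PNT. For $I_2$, however, the factor $u^{-11/6}$ is not absolutely integrable under a trivial bound: the unconditional estimate will use $\mathcal{R}(u) \ll u e^{-c\sqrt{\log u}}$ and a Laplace-type estimate of $\int_0^{L\sigma} e^{v/6 - c\sqrt v}dv \ll e^{L\sigma/6 - c\sqrt{L\sigma}}$, producing the error $O(X^{\sigma/6} e^{-c_0(\sigma)\sqrt{\log X}})$. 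Under RH, the bound $\mathcal{R}(u) = O(u^{1/2+\eps})$ makes the integrand absolutely integrable without any effective cutoff, so the Taylor expansion of $\widehat\phi$ can be carried out uniformly in $\ell$ exactly as for $I_1$, producing the full asymptotic expansion with error $O_\ell(L^{-\ell-1})$. This RH-dependent integrability is the only nontrivial analytic point; the remaining bookkeeping of Taylor remainders is routine.
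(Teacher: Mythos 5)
Your proposal follows the paper's proof essentially step for step: extract the single divergent diagonal ($e=2$ for $I_1$, $e=1$ for $I_2$), perform partial summation against $d(u+\mathcal R(u))$, pick up the $\frac{\phi(0)}{4}L$ (resp.\ $L\int_0^\infty\widehat\phi(u)e^{Lu/6}du$) main term from the smooth part, the $\widehat\phi(0)$ boundary term from $\mathcal R(1)=-1$, and then Taylor-expand $\widehat\phi$ and $\widehat\phi'$ to order $\ell$ in the absolutely convergent remainders and in the $\mathcal R$-integral; the PNT bound gives the unconditional $I_2$ error and RH gives absolute integrability of the $u^{-11/6}\mathcal R(u)$ integral and hence the full expansion. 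This matches the paper's argument, and your coefficient bookkeeping reproducing $\nu_1(n)$ and $\nu_2(n)$ is correct.

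One small inaccuracy worth flagging: you claim $\beta_e(p)=O(p^{-1})$ for all $e\ge 3$, but in fact $\beta_e(p)=(1-\delta_{3\mid e})(p^{-1/3}-p^{-1})+O(p^{-4/3})$, so for $e\in\{4,5,7,8,\dots\}$ one only has $\beta_e(p)=O(p^{-1/3})$. This does not harm the argument: for $e\ge 3$ the factor $p^{-e/2}=O(p^{-3/2})$ already forces absolute convergence even with the trivial bound $\beta_e(p)=O(1)$, and the decisive saving $\beta_2(p)=O(p^{-1/3})$ (giving $p^{-4/3}$ at $e=2$), which you do state, is correct. So the conclusion stands; just the stated bound for $e\ge 3$ should be softened to $O(p^{-1/3})$ or dropped in favor of the trivial bound.
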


\begin{proof}
We first split the sums as 
\begin{equation}\label{prime sum eqn 1}
  I_1(X;\phi)    =  \sum_p \frac{\log p }{p} \widehat{\phi} \Big(  \frac{ 2  \log p }{L} \Big)   + I'_1(X;\phi),  \qquad 
  I_2(X;\phi)       =\sum_p \frac{ \log p }{ p^{\frac56}}\widehat\phi\Big(\frac{\log p }{L}\Big)  +I'_2(X;\phi) ,
\end{equation}
where
\begin{equation}\begin{split}\label{prime sum eqn 3} 
I'_1(X;\phi) & :=   
\sum_{p}\sum_{e \neq 2 }   \frac{x_p\log p}{p^{\frac e2}}\widehat\phi\Big(\frac{\log p^e}{L}\Big) (\theta_e+\tfrac 1p) + \sum_p  \frac{   \log p }{p} \Big( x_p\Big( 1 + \frac1p\Big) -1 \Big) \widehat{\phi} \Big(  \frac{ 2  \log p }{L} \Big),  \\
I'_2(X;\phi) & :=  \sum_{p}\sum_{e=2}^\infty  \frac{\log p}{p^{\frac e2}}\widehat\phi\Big(\frac{\log p^e}{L}\Big) \beta_e(p)  +
\sum_{p}   \frac{\log p}{p^{\frac 12}}\widehat\phi\Big(\frac{\log p }{L}\Big) \Big(  \beta_1 (p) -  \frac{1}{p^{\frac13}} \Big) .
\end{split}\end{equation}
We may also rewrite the sums in~\eqref{prime sum eqn 1} using partial summation as follows:
\begin{equation}\begin{split}\label{prime sum eqn 2}
 \sum_p \frac{\log p }{p} \widehat{\phi} \Big(  \frac{ 2  \log p }{L} \Big)  & =   \int_1^\infty \frac{1 }{u} \widehat{\phi} \Big(  \frac{ 2  \log u }{L} \Big)d( u+ \mathcal{R}(u))  \\
&    =  \frac{ \phi(0) }{4} L +  \widehat{\phi}(0)  -  \int_1^\infty \Big(\frac{-1}{u^2} \widehat{\phi} \Big( \frac{2 \log u }{L} \Big) + \frac{2}{u^2L} \widehat{\phi}' \Big( \frac{2 \log u }{L} \Big)  \Big)  \mathcal{R} (u) du , \\
 \sum_p \frac{\log p }{p^{\frac56} } \widehat{\phi} \Big(  \frac{   \log p }{L} \Big) &  =     L \int_0^\infty \widehat{\phi} ( u)  e^{Lu/6} du +  \widehat{\phi}(0)   \\
 & \hspace{2cm}-  \int_1^{X^{\sigma}} \Big(\frac{-5}{6 u^2} \widehat{\phi} \Big( \frac{  \log u }{L} \Big) + \frac{ 1 }{u^2L} \widehat{\phi}' \Big( \frac{  \log u }{L} \Big)  \Big) u^{\frac16}  \mathcal{R} (u) du .
\end{split}\end{equation}

Next, for any $\ell\geq 1$ and $ |t| \leq \sigma  $, Taylor's theorem reads 
\begin{equation}\label{taylor eqn}
  \widehat{\phi} ( t ) = \sum_{n=0}^\ell  \frac{ \widehat{\phi}^{(n)} (0) }{ n!} t^n + O_{\ell} ( |t|^{\ell+1}),
\end{equation}
and one has a similar expansion for $\widehat \phi'$. The claimed estimates follow from substituting this expression into~\eqref{prime sum eqn 3} and \eqref{prime sum eqn 2} and evaluating the error term using the prime number theorem $ \mathcal{R}(u)\ll u e^{-c \sqrt{ \log u }} $. 
\end{proof}
 
We end this section by proving Theorem~\ref{theorem omega result counts}.

\begin{proof}[Proof of Theorem~\ref{theorem omega result counts}]
Assume that $\theta,\omega\geq 0$ are admissible values in~\eqref{equation TT local 2}, and are such that $\theta+\omega <\frac 12$. Let $\phi$ be any real even Schwartz function such that $\widehat \phi\geq 0$ and $1< \sup({\rm supp}(\widehat \phi)) < (\frac 56-\theta)/(\frac 13+\omega)$; this is possible thanks to the restriction $\theta+\omega <\frac 12$. Combining  Lemmas~\ref{lemma explicit formula} and ~\ref{lemma average log} with Proposition~\ref{proposition prime sum}, we obtain the estimate\footnote{This is similar to the proof of Theorem~\ref{theorem main}. However, since we have a different condition on $\theta$ (that is $\theta + \omega < \frac 12$), there is an additional error term in the current estimate.}
\begin{multline}\frac 1{N^\pm(X)} \sumD \D_{\phi}(K)=\widehat \phi(0)\Big(1 + \frac{  \log (4 \pi^2 e)  }{L} -\frac{C_2^\pm}{5C_1^\pm} \frac{X^{-\frac 16}}{L} 
+ \frac{(C_2^\pm)^2 }{5(C_1^\pm)^2 } \frac{X^{-\frac 13}}{L} \Big) \\
+  \frac1{\pi}\int_{-\infty}^{\infty}\phi\Big(\frac{Lr}{2\pi}\Big)\Re\Big(\frac{\Ga'_{\pm}}{\Ga_{\pm}}(\tfrac12+ir)\Big)dr   -\frac{2}{L}\sum_{p,e}\frac{x_p\log p}{p^{\frac e2}}\widehat\phi\Big(\frac{\log p^e}{L}\Big) (\theta_e+\tfrac 1p)\\  -\frac{2C_2^\pm X^{-\frac 16}}{C_1^\pm L}\Big( 1-\frac{C_2^\pm}{C_1^\pm}X^{-\frac{1}{6}}\Big) \sum_{p,e}\frac{\log p}{p^{\frac e2}}\widehat\phi\Big(\frac{\log p^e}{L}\Big) \beta_e(p) +O_{\eps}(X^{\theta - 1 + \sigma(\omega + \frac12 ) +\eps}+X^{-\frac 12+\frac \sigma 6}), 
\label{equation corollary}
\end{multline}
 where $\sigma=\sup({\rm supp}(\widehat \phi))$. 
 
To bound the integral involving the gamma function in~\eqref{equation corollary}, we note that Stirling's formula implies that for $s$ in any fixed vertical strip minus discs centered at the poles of $\Gamma_\pm(s)$, we have the estimate
$$ \Re \left(  \frac{ \Gamma'_{\pm} }{ \Gamma_{\pm}} ( s) \right) =  \log | s | + O(1).$$
Now, $ \phi(x) \ll |x|^{-2}$,  and thus
$$ \frac{1 }{ \pi} \int_{-\infty}^\infty \phi \Big(  \frac{ Lr}{ 2 \pi} \Big)  \Re \Big(  \frac{ \Gamma'_{\pm} }{ \Gamma_{\pm}} ( \tfrac12 + ir ) \Big) dr   \ll \int_{-1}^1 \left|  \phi \Big(  \frac{ Lr}{ 2 \pi} \Big) \right| dr + \int_{ |r|\geq 1 }   \frac{ \log (1+|r|)}{  (Lr)^2} dr \ll \frac1L. $$
Moreover, Lemma~\ref{one-level-2-term} implies the estimates 
$$- \frac{2}{L}\sum_{p,e}\frac{x_p\log p}{p^{\frac e2}}\widehat\phi\Big(\frac{\log p^e}{L}\Big) (\theta_e+\tfrac 1p) \ll 1$$
and
\begin{multline*}
-\frac{2C_2^\pm X^{-\frac 16}}{C_1^\pm L}\Big( 1-\frac{C_2^\pm}{C_1^\pm}X^{-\frac{1}{6}}\Big) \sum_{p,e}\frac{\log p}{p^{\frac e2}}\widehat\phi\Big(\frac{\log p^e}{L}\Big) \beta_e(p)  \\= -\frac{2C_2^\pm X^{-\frac 16}}{C_1^\pm } \int_0^\infty \widehat \phi(u) e^{\frac{Lu}{6}} du +O\big( X^{-\frac { 1}6}+X^{\frac{\sigma -2}{6}} \big), 
\end{multline*}
since the Riemann hypothesis for $\zeta_K(s)$ implies the Riemann hypothesis for $\zeta(s)$. Combining these estimates, we deduce that
the right-hand side of~\eqref{equation corollary} is
$$ \leq  -C_\eps X^{\frac{\sigma-1}6-\eps}+O_\eps(1+X^{\frac{\sigma-1}6-\delta+\eps}+X^{-\frac 13+\frac \sigma 6}), $$
where $\eps>0$ is arbitrary, $C_\eps$ is a positive constant, and $\delta :=\frac{\sigma-1}6 -( \theta - 1 + \sigma(\omega + \frac12 )) >0 $. However, for small enough $\eps$, this contradicts the bound 
$$ \frac 1{N^\pm(X)} \sumD \D_{\phi}(K)  = O( \log X), $$
which is a direct consequence of the Riemann Hypothesis for $\zeta_K(s)$ and the Riemann-von Mangoldt formula~\cite[Theorem 5.31]{IK}.
\end{proof}

\section{A refined Ratios Conjecture}
\label{section RC}

The celebrated $L$-functions Ratios Conjecture \cite{CFZ} predicts precise formulas for estimates of averages of ratios of (products of) $L$-functions evaluated at points close to the critical line. The conjecture is presented in the form of a recipe with instructions on how to produce predictions of a certain type in any family of $L$-functions. In order to follow the recipe it is of fundamental importance to have control of counting functions of the type~\eqref{equation TT} and \eqref{equation TT local 2 vector form} related to the family. The connections between counting functions, low-lying zeros and the Ratios Conjecture are central in the present investigation. 

The Ratios Conjecture has a large variety of applications. Applications to problems about low-lying zeros first appeared in the work of Conrey and Snaith \cite{CS}, where they study the one-level density of families of quadratic Dirichlet $L$-functions and quadratic twists of a holomorphic modular form. The investigation in \cite{CS} has inspired a large amount of work on low-lying zeros in different families; see, e.g., \cite{MiSymplectic,MiOrthogonal,HKS,FM,DHP,FPS1,FPS3,MS,CP, Wax}.

As part of this project, we went through the steps of the Ratios Conjecture recipe with the goal of estimating the $1$-level density. We noticed that the resulting estimate does not  predict certain terms in Theorem~\ref{theorem main}.
To fix this, we modified~\cite[Step 4]{CFZ}, which is the evaluation of the average of the coefficients appearing in the approximation of the expression 
\begin{equation} \label{ralphgam}
R(\alpha,\gamma;X):=\frac 1{N^\pm(X)}\sumD \frac{L\left(\frac{1}{2}+\alpha,f_K\right)}{L\left(\frac{1}{2}+\gamma,f_K\right)}.
\end{equation}
More precisely, instead of only considering the main term, we kept track of the secondary term in Lemma~\ref{lemma count1}. 
 
We now describe more precisely the steps in the Ratios Conjecture recipe. The first step involves the approximate functional equation for $L(s,f_K)$, which reads 
\begin{equation}
L\left(s, f_K\right) = \sum_{n<x} \frac{\lambda_K(n)}{n^s}+|D_K|^{\frac 12-s}\frac{\Ga_{\pm}(1-s)}{\Ga_{\pm}(s)} \sum_{n<y}
\frac{\lambda_K(n)}{n^{1-s}}+\:{\rm Error},\label{approxeq}
\end{equation}
where $x,y$ are such that $xy\asymp |D_K| (1+|t|)^2 $(this is in analogy with~\cite{CS}; see~\cite[Theorem 5.3]{IK} for a description of the approximate functional equation of a general $L$-function). The analysis will be carried out assuming that the error term can be neglected, and that the sums can be completed.

Following \cite{CFZ}, we replace the
numerator of $\eqref{ralphgam}$ with the approximate functional equation $\eqref{approxeq}$ and the
denominator of $\eqref{ralphgam}$ with $\eqref{lemobius}$. We will  need to estimate the first sum in \eqref{approxeq} evaluated at $s=\frac{1}{2}+\alpha$, where $|\Re(\alpha)|$ is sufficiently small. This gives the contribution
\begin{equation} \label{ragone}
R_1(\alpha, \gamma;X):=\frac1{N^\pm(X)}\sumD \sum_{h,m} \frac{\lambda_K(m)\mu_{K}(h)}{m^{\frac{1}{2} +
\alpha}h^{\frac{1}{2}+\gamma}}
\end{equation}
to \eqref{ralphgam}. This infinite sum converges absolutely in the region $\Re(\alpha)>\frac 12$ and $\Re(\gamma)>\frac 12$, however, later in this section we will provide an analytic continuation to a wider domain. We will also need to evaluate the contribution of the second sum in $\eqref{approxeq}$, which is given by
\begin{equation} \label{ragtwo}
R_2(\alpha, \gamma;X):=\frac1{N^\pm(X)}\sumD |D_K|^{-\alpha}\frac{\Ga_{\pm}\left(\frac 12-\alpha \right)}{\Ga_{\pm}\left(\frac 12+\alpha\right)}\sum_{h,m} \frac{\lambda_K(m)\mu_{K}(h)}{m^{\frac{1}{2}-\alpha}h^{\frac{1}{2}+\gamma}}.
\end{equation}
(Once more, the series converges absolutely for $\Re(\alpha)<-\frac 12$ and $\Re(\gamma)>\frac 12$, but we will later provide an analytic continuation to a wider domain.)

A first step in the understanding of the $R_j(\alpha, \gamma;X)$  will be achieved using the following precise evaluation of the expected value of $\lambda_K(m)\mu_K(h) $.

\begin{lemma}\label{lemma count1} 
Let $m,h\in \mathbb N$, and let $\frac 12 \leq \theta<\frac 56$ and $\omega\geq0$ be such that~\eqref{equation TT local 2 vector form} holds. Assume that $h$ is cubefree. We have the estimate
\begin{multline*} 
\frac{1}{N^\pm(X)}\sumD\lambda_K(m)\mu_K(h)  = \prod_{p^e\parallel m, p^s \parallel h}f(e,s,p)x_p\\
+ \Bigg(  \prod_{p^e\parallel m, p^s \parallel h}g(e,s,p)y_p-  \prod_{p^e\parallel m, p^s \parallel h}f(e,s,p)x_p\Bigg) \frac{C_2^\pm}{C_1^\pm}X^{-\frac{1}{6}}\Big( 1-\frac{C_2^\pm}{C_1^\pm}X^{-\frac{1}{6}}\Big) \\
+O_\eps\bigg( \prod_{  p \mid hm , p^e \parallel m  } \big( (2 e + 5)p^{\omega}\big) X^{\theta-1 +\eps}\bigg),
 \end{multline*} 
where  
\begin{align*}
f(e,0,p)&:=  \frac{e+1}{6}+\frac{1+(-1)^e}{4}+\frac{\tau_e}{3}+\frac{1}{p}; \\
 f(e,1,p)&:= - \frac{e+1}{3}+\frac{\tau_e}{3}- \frac{1}{p} ; \\
  f(e,2,p)&:=\frac{e+1}{6}-\frac{1+(-1)^e}{4}+\frac{\tau_e}{3} ; \\
g(e,0,p)&:= \frac{(e+1)(1+p^{-\frac 13})^3}{6} +\frac{(1+(-1)^e)(1+p^{-\frac 13})(1+p^{-\frac 23})}{4} \\
             & \quad + \frac{\tau_e(1+p^{-1})}{3} + \frac{(1+p^{-\frac 13})^2}{p}; \\
 g(e,1,p)&:= -\frac{(e+1)(1+p^{-\frac 13})^3}{3}+ \frac{\tau_e(1+p^{-1})}{3}-\frac{(1+p^{-\frac 13})^2}{p}; \\
  g(e,2,p)&:= \frac{(e+1)(1+p^{-\frac 13})^3}{6} -\frac{(1+(-1)^e)(1+p^{-\frac 13})(1+p^{-\frac 23})}{4}  + \frac{\tau_e(1+p^{-1})}{3} .
\end{align*}
\end{lemma}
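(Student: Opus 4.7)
The plan is to leverage multiplicativity of $\lambda_K$ and $\mu_K$, apply the joint splitting-type count~\eqref{equation TT local 2 vector form}, and carefully expand the normalising factor $1/N^\pm(X)$ as a geometric series.

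First I would write $m=\prod_p p^{e_p}$ and $h=\prod_p p^{s_p}$ (with $s_p\in\{0,1,2\}$, using cubefreeness of $h$), so that $\lambda_K(m)\mu_K(h)=\prod_{p\mid hm}\lambda_K(p^{e_p})\mu_K(p^{s_p})$. The key observation is that $\lambda_K(p^e)\mu_K(p^s)$ depends on $K$ only through the splitting type of $p$ in $K$; this is immediate from the Section~\ref{section background} tables for $s\in\{0,1\}$, and for $s=2$ reduces to identifying $\mu_K(p^2)=(D_K/p)$ as $+1,-1,+1,0,0$ on $T_1,\ldots,T_5$ respectively. The latter follows since $\chi_{D_K}(p)$ detects whether ${\rm Frob}_p$ lies in $A_3\simeq {\rm Gal}(\widehat K/F)$ (yielding $+1$ on the identity and $3$-cycle classes corresponding to $T_1,T_3$, and $-1$ on the transposition class corresponding to $T_2$), while $p\mid D_K$ on the two ramified types $T_4,T_5$.

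Next, grouping $K$ by the splitting-type vector of $\vp=(p_1,\ldots,p_J)$ (the primes dividing $hm$) and substituting~\eqref{equation TT local 2 vector form}, I would exploit that both $A^\pm_\vp(T_\vk)=C_1^\pm\prod_j x_{p_j}c_{k_j}(p_j)$ and $B^\pm_\vp(T_\vk)=C_2^\pm\prod_j y_{p_j}d_{k_j}(p_j)$ factor as products over $j$. Fubini then collapses the sum over splitting-type vectors into Euler-type products: the main term becomes $C_1^\pm X\prod_p x_p\,f(e_p,s_p,p)$ and the secondary becomes $C_2^\pm X^{5/6}\prod_p y_p\,g(e_p,s_p,p)$, where
\[
f(e,s,p):=\sum_{k=1}^5 c_k(p)\lambda(p^e,T_k)\mu(p^s,T_k),\qquad g(e,s,p):=\sum_{k=1}^5 d_k(p)\lambda(p^e,T_k)\mu(p^s,T_k).
\]
A direct substitution of the tabulated values of $c_k(p),d_k(p),\lambda(p^e,T_k),\mu(p^s,T_k)$ will then yield the explicit formulas for $f$ and $g$ claimed in the lemma.

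Finally, I would divide by $N^\pm(X)=C_1^\pm X+C_2^\pm X^{5/6}+O_\eps(X^{\theta+\eps})$ using the geometric expansion
\[
\frac{1}{N^\pm(X)}=\frac{1}{C_1^\pm X}\Bigl(1-\tfrac{C_2^\pm}{C_1^\pm}X^{-\frac16}+\tfrac{(C_2^\pm)^2}{(C_1^\pm)^2}X^{-\frac13}+O_\eps(X^{\theta-1+\eps})\Bigr),
\]
valid since $\theta\geq\tfrac12$ (the next geometric term $X^{-1/2}$ is absorbed into the $O(X^{\theta-1})$). Collecting the two resulting contributions produces exactly the telescoped combination $(C_2^\pm/C_1^\pm)X^{-1/6}(1-(C_2^\pm/C_1^\pm)X^{-1/6})$ multiplying $(\prod y_p g-\prod x_p f)$ as stated. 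For the error term, a case check with the tables gives $\sum_k|\lambda(p^e,T_k)\mu(p^s,T_k)|\leq 2e+5$ (using $|\lambda_K(p^e)|\leq e+1$ and $|\mu_K(p^s)|\leq 2$), so that summing the $5^J$ splitting-type errors from~\eqref{equation TT local 2 vector form} and dividing by $N^\pm(X)$ yields the claimed bound. I expect the main obstacle to be careful bookkeeping of the local contributions; the only genuinely nonmechanical step is the Artin-theoretic identification of the $\mu_K(p^2)$ values across splitting types.
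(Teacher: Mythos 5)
Your proposal is correct and follows essentially the same route as the paper: factor $\lambda_K(m)\mu_K(h)$ over primes, group fields by the splitting-type vector $\vk$, apply~\eqref{equation TT local 2 vector form}, and exploit the product structure of $A^\pm_\vp(T_\vk)$ and $B^\pm_\vp(T_\vk)$ to collapse the $\vk$-sum into a Euler product, after which the geometric expansion of $1/N^\pm(X)$ (absorbing the $X^{-1/2}$ term via $\theta\ge\frac12$) produces the telescoped secondary coefficient. Your Artin-theoretic identification of $\mu_K(p^2)=\big(\frac{D_K}{p}\big)$ across splitting types, and the explicit bookkeeping of the normalisation step, are details the paper leaves implicit, but they are correct and in the same spirit.
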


\begin{proof} 
We may write $ m = \prod_{j=1}^J p_j^{e_j}$ and $ h = \prod_{j=1}^J  p_j^{s_j }$, where
$ p_1 , \ldots , p_J$ are distinct primes and for each $ j $, $ e_j $ and $ s_j  $ are nonnegative integers, but not both zero. Then we see that
\begin{align*}
\sumD \lambda_K (m) \mu_K (h) = & \sumD \prod_{j=1}^J \bigg( \lambda_K \big( p_j ^{e_j} \big) \mu_K \big( p_j^{s_j} \big) \bigg) =  \sum_{\vk} \sum_{ \substack{ K \in \F^{\pm}(X) \\   \vp  : ~type ~T_{\vk}  }} \prod_{j=1}^J \bigg( \lambda_K \big( p_j ^{e_j} \big) \mu_K \big( p_j^{s_j} \big) \bigg) ,
\end{align*} 
where $\vk = (k_1 , \ldots , k_J ) $ runs over $\{ 1, 2, 3, 4, 5 \}^J $ and $\vp = ( p_1 , \ldots , p_J)$. When each $p_j$ has splitting type $T_{k_j}$ in $K$, the values $\lambda_K (p_j^{e_j})$ and $ \mu_K ( p_j^{s_j})$ depend  on $p_j$, $k_j $, $e_j$ and $ s_j$. Define
\begin{equation*}\label{def eta}
 \eta_{1, p_j}(k_j, e_j ) := \lambda_K (p_j^{e_j}) ,  \qquad   \eta_{2, p_j} (k_j, s_j ) :=   \mu_K (p_j^{s_j})  
 \end{equation*}
for each $ j \leq J$ with $p_j $ of splitting type $T_{k_j}$ in $K$, as well as
\begin{equation}\label{def eta vec}
 \eta_{1, \vp} ( \vk, \ve) := \prod_{j=1}^J \eta_{1, p_j} (k_j, e_j ),  \qquad  \eta_{2, \vp} ( \vk, \vs) := \prod_{j=1}^J \eta_{2, p_j} (k_j, s_j ).
 \end{equation}
We see that
\begin{align*}
\sumD \lambda_K (m) \mu_K (h)  & =  \sum_{\vk} \eta_{1, \vp} ( \vk, \ve) \eta_{2,\vp} ( \vk, \vs) \sum_{ \substack{ K \in \F^{\pm}(X) \\   \vp  : ~type ~T_{\vk}  }}  1 \\
 & =  \sum_{\vk} \eta_{1, \vp} ( \vk, \ve) \eta_{2,\vp} ( \vk, \vs)  N^{\pm}_{\vp} ( X , T_{\vk}),
\end{align*}
which by~\eqref{equation TT local 2 vector form} is equal to
\begin{align*}
&  \sum_{\vk} \eta_{1, \vp} ( \vk, \ve) \eta_{2,\vp} ( \vk, \vs) \bigg(C_1^\pm \prod_{j = 1}^J ( x_{p_j} c_{k_j} (p_j ) ) X + C_2^\pm \prod_{j = 1}^J ( y_{p_j} d_{k_j} (p_j ) )  X^{\frac 56} + O_{\eps} \bigg(\prod_{j =1}^J p_j^{\omega}X^{\theta+\eps}   \bigg)   \bigg) \\
= &  C_1^\pm X  \bigg(  \sum_{\vk} \eta_{1, \vp} ( \vk, \ve) \eta_{2,\vp} ( \vk, \vs)   \prod_{j = 1}^J  ( x_{p_j} c_{k_j} (p_j ))   \bigg)       +  C_2^\pm   X^{\frac 56}  \bigg(   \sum_{\vk} \eta_{1, \vp} ( \vk, \ve) \eta_{2,\vp} ( \vk, \vs) \prod_{j = 1}^J    ( y_{p_j} d_{k_j} (p_j ))   \bigg)    \\
& +  O_{\eps} \bigg(  \sum_{\vk} | \eta_{1, \vp} ( \vk, \ve)  \eta_{2,\vp} ( \vk, \vs) |       \prod_{j =1}^J p_j^{\omega}X^{\theta+\eps}   \bigg) .     
\end{align*}
We can change the last three $\vk$-sums into products by \eqref{def eta vec}. Doing so, we obtain that the above is equal to
\begin{align*}
 &C_1^\pm X  \prod_{j =1}^J  \bigg( x_{p_j}  \widetilde f(e_j, s_j, p_j )   \bigg)      +  C_2^\pm   X^{\frac 56} \prod_{j =1}^J  \bigg(  y_{p_j}   \widetilde g(e_j, s_j, p_j )   \bigg)        +  O_{\eps} \bigg(  \prod_{j =1}^J \bigg( p_j^{\omega}    (2e_j +5 )    \bigg)  X^{\theta+\eps}   \bigg)   \\
 =& C_1^\pm X \prod_{p^e\parallel m, p^s \parallel h}\widetilde f(e,s,p)x_p +  C_2^\pm X^{\frac 56}    \prod_{p^e\parallel m, p^s \parallel h}\widetilde g(e,s,p)y_p +  O_{\eps} \bigg(  \prod_{j =1}^J \bigg( p_j^{\omega}   ( 2 e_j + 5 )    \bigg)  X^{\theta+\eps}   \bigg) ,  
\end{align*}
where
$$ \widetilde f( e, s, p) := \sum_{k=1}^5 \eta_{1, p } (k  , e  ) \eta_{2,p   } (k  , s  )    c_{k } (p  ) , \qquad \widetilde g(e,s,p) :=    \sum_{k  =1}^5   \eta_{1, p  } ( k  , e  ) \eta_{2,p   } ( k  , s    )    d_{k } (p  ) .  $$
A straightforward calculation shows that $\widetilde f( e, s, p) = f( e, s, p) $ and $\widetilde g( e, s, p)=g( e, s, p)  $ (see the explicit description of the coefficients in Section~\ref{section background}; note that $ \eta_{2,p}(k,0) = 1 $), and the lemma follows. 
\end{proof}

We now proceed with the estimation of $R_1(\alpha, \gamma ; X )$. Taking into account the two main terms in Lemma \ref{lemma count1}, we expect that
\begin{equation}
R_1(\alpha, \gamma;X)=R_1^M(\alpha, \gamma)+\frac{C_2^\pm}{C_1^\pm}X^{-\frac 16}\Big( 1-\frac{C_2^\pm}{C_1^\pm}X^{-\frac{1}{6}}\Big) (R_1^S(\alpha, \gamma)-R_1^M(\alpha, \gamma))+\text{Error},
\label{equation expansion R_1}
\end{equation}
where 
\begin{align}
\label{equation definition R_1M}
\begin{split}R_1^M(\alpha, \gamma):=&\prod_p \Big( 1 + \sum_{ e\geq 1} \frac{x_p f(e,0,p)}{p^{e(\frac 12+\alpha)}} + \sum_{ e\geq 0} \frac{x_p f(e,1,p)}{p^{e(\frac 12+\alpha) +(\frac 12+\gamma) }} + \sum_{ e\geq 0} \frac{x_p f(e,2,p)}{p^{e(\frac 12+\alpha) + 2(\frac 12+\gamma) }}\Big),\\
R_1^S(\alpha, \gamma):=&\prod_p \Big( 1 + \sum_{ e\geq 1} \frac{y_p g(e,0,p)}{p^{e(\frac 12+\alpha)}} + \sum_{ e\geq 0} \frac{y_p g(e,1,p)}{p^{e(\frac 12+\alpha) +(\frac 12+\gamma) }} + \sum_{ e\geq 0} \frac{y_p g(e,2,p)}{p^{e(\frac 12+\alpha) + 2(\frac 12+\gamma) }}\Big) 
\end{split}
\end{align}
for $ \Re ( \alpha) , \Re ( \gamma) > \frac 12$. Since
\begin{multline*}
R_1^M(\alpha, \gamma) 
= \prod_p \Bigg( 1 + \frac{1}{p^{1+2\alpha}}  - \frac{1}{p^{1+\alpha+\gamma}}\\ + O \Big(\frac{1}{p^{\frac 32 +\Re(\alpha)}}+\frac{1}{p^{\frac 32 +3\Re(\alpha)}}+\frac{1}{p^{\frac 32 +\Re(\gamma)}} + \frac{1}{p^{\frac 32+ \Re ( 2 \alpha + \gamma) }} + \frac{1}{p^{\frac 52 + \Re (3 \alpha+ 2 \gamma) }} \Big) \Bigg),
\end{multline*}
we see that 
\begin{equation}
 A_3(\alpha, \gamma) :=   \frac{\zeta(1+\alpha+\gamma)}{\zeta(1+2\alpha) }  R_1^M(\alpha, \gamma) 
 \label{equation definition A_3}
\end{equation}
 is analytically continued to the region\footnote{To see this, write $\frac{\zeta(1+\alpha+\gamma)}{\zeta(1+2\alpha) }$ as an Euler product, and expand out the triple product in~\eqref{equation definition A_3}. The resulting expression will converge in the stated region.} $ \Re (\alpha), \Re (\gamma)> - \frac 16 $. Similarly, from the estimates
\begin{align*}
& \sum_{ e\geq 1} \frac{y_p g(e,0,p)}{p^{e(\frac 12+\alpha)}} = \frac 1{p^{\frac 56+\alpha}}+\frac 1{p^{1+2\alpha}} +O\Big( \frac 1{p^{\Re(\alpha) + \frac 32}}+\frac 1{p^{2\Re(\alpha) + \frac 43}}+ \frac 1{p^{3\Re(\alpha) + \frac 32}}\Big),\\
&\sum_{ e\geq 0} \frac{y_p g(e,1,p)}{p^{e(\frac 12+\alpha) +(\frac 12+\gamma) }}=-\frac 1{p^{\frac 56+\gamma}} - \frac 1{p^{1+\alpha+\gamma}}+O\Big(\frac 1{p^{\frac 43+\Re(\alpha+\gamma)}} \Big),\\
&\sum_{ e\geq 0} \frac{y_p g(e,2,p)}{p^{e(\frac 12+\alpha) + 2(\frac 12+\gamma) }}  = O \Big(  \frac 1{p^{\frac 32+\Re(\alpha+2\gamma)}} \Big) ,
\end{align*} 
we deduce that
\begin{equation}
A_4(\alpha,\gamma):= \frac{ \zeta(\tfrac 56+\gamma) \zeta(1+\alpha+\gamma)}{\zeta(\tfrac 56+\alpha) \zeta(1+2\alpha) }R_1^S(\alpha, \gamma)
\label{equation definition A_4}
\end{equation}
is analytic in the region $\Re(\alpha),\Re(\gamma)>-\frac 16$.
Note that by their defining product formulas, we have the bounds
\begin{equation}\label{trivial bounds for A3 and A4}
  A_3 ( \alpha, \gamma)=O_\eps (1)  ,\quad  A_4 ( \alpha, \gamma) = O_\eps (1) 
  \end{equation}
for $ \Re ( \alpha) , \Re ( \gamma) \geq - \frac16 + \eps > - \frac16$. Using this notation,~\eqref{equation expansion R_1} takes the form
\begin{multline*}
 R_1(\alpha,\gamma;X) = \frac{\zeta(1+2\alpha)}{\zeta(1+\alpha+\gamma)} \Big( A_3(\alpha,\gamma)+\frac{C_2^\pm}{C_1^\pm} X^{-\frac 16} \Big( 1-\frac{C_2^\pm}{C_1^\pm}X^{-\frac{1}{6}}\Big) \Big( \frac{\zeta(\tfrac 56+\alpha)}{\zeta(\tfrac 56+\gamma)} A_4(\alpha,\gamma) - A_3(\alpha,\gamma) \Big)\Big)\\ +\text{Error}.
\end{multline*}  

The above computation is sufficient in order to obtain a conjectural evaluation of the average~\eqref{ragone}. However, our goal is to evaluate the $1$-level density through the average of $\frac{L'}{L} (\frac 12+r ,f_K ) $; therefore it is necessary to also compute the partial derivative $\frac{\partial }{ \partial \alpha}R_1(\alpha, \gamma;X)|_{\alpha=\gamma=r} $. To do so, we need to make sure that the error term stays small after a differentiation. This is achieved by applying Cauchy's integral formula for the derivative 
$$f'(a)=\frac{1}{2\pi i}\int_{|z-a|=\kappa} \frac{f(z)}{(z-a)^2}dz$$ (valid for all small enough $\kappa>0$), and bounding the integrand using the approximation for $R_1(\alpha,\gamma;X) $ above. As for the main terms, one can differentiate them term by term, and obtain the expected approximation
\begin{multline}
\frac{\partial }{ \partial \alpha}R_1(\alpha, \gamma;X)\Big|_{\alpha=\gamma=r}=A_{3,\alpha}(r,r)+ \frac{\zeta'}{\zeta}(1+2r) A_3(r,r) \\+ \frac{C_2^\pm}{C_1^\pm} X^{-\frac 16}\Big( 1-\frac{C_2^\pm}{C_1^\pm}X^{-\frac{1}{6}}\Big) \Big( A_{4,\alpha}(r,r) +\frac{\zeta'}{\zeta} (\tfrac 56+r) A_4 (r,r) - A_{3,\alpha}(r,r) +\frac{\zeta'}{\zeta}(1+2r) ( A_4(r,r)-A_3(r,r)) \Big) \\
+\text{Error},
\label{equation R_1 calculation}
\end{multline}
where $A_{3,\alpha}(r,r)=\frac{\partial }{ \partial \alpha}A_3(\alpha,\gamma)\Big|_{\alpha=\gamma=r}$ and $A_{4,\alpha}(r,r)=\frac{\partial }{ \partial \alpha}A_4(\alpha,\gamma)\Big|_{\alpha=\gamma=r}.$

Now, from the definition of $f(e,j,p) $ and $g(e,j,p)$ (see Lemma~\ref{lemma count1}) as well as~\eqref{definition theta_e} and~\eqref{definition kappa_e}, we have 
\begin{align*}
 f(1,0,p)+f(0,1,p) & =g(1,0,p)+g(0,1,p)=0,\\
f(e,0,p)+f(e-1,1,p) +f(e-2,2,p) & =g(e,0,p)+g(e-1,1,p) +g(e-2,2,p)=0, \\
 f(e, 0 , p ) - f(e-2 , 2, p ) & = \theta_e + p^{-1}  , \\
   g(e, 0 , p ) - g(e-2 , 2, p ) & =  ( 1 + p^{- \frac13} ) ( \kappa_e ( p ) + p^{-1} + p^{- \frac43} )  .
  \end{align*}
   By the above identities and the definition~\eqref{equation definition R_1M}, we deduce that
$$R_1^M(r,r)=A_3(r,r)=R_1^S(r,r)=A_4(r,r)=1. $$
 It follows that for $\Re(r)>\frac 12$,
 \begin{align*}
& R^M_{1,\alpha}(r,r)  =  \frac{R^M_{1,\alpha}(r,r)}{R^M_1(r,r)} = \frac{ \partial}{\partial \alpha}   \log R^M_{1} ( \alpha, \gamma) \bigg|_{ \alpha=\gamma = r}\\
  &= \sum_p \left(  - \frac{  x_p \log p }{p^{\frac12 + r}}f(1,0,p) - \sum_{ e \geq 2} \frac{ x_p \log p }{ p^{ e( \frac12 + r ) } }  \big( f(e,0,p ) - f(e-2,2,p)\big)  \right)\\
   &\hspace{1cm}+ \sum_p \left(   - \sum_{ e \geq 2} \frac{ x_p \log p }{ p^{ e( \frac12 + r ) } }  (e-1) \big( f( e,0,p) + f(e-1,1,p) + f( e-2,2,p) \big)  \right)   \\
  & =- \sum_p      \sum_{ e \geq 1} \frac{ x_p \log p }{ p^{ e( \frac12 + r ) } }  \Big( \theta_e + \frac1p \Big)
   \end{align*}  
   and
    \begin{align*}
  R^S_{1,\alpha}(r,r)   
  &= \sum_p \left(  - \frac{  y_p \log p }{p^{\frac12 + r}}g(1,0,p) - \sum_{ e \geq 2} \frac{ y_p \log p }{ p^{ e( \frac12 + r ) } }  \big( g(e,0,p ) - g(e-2,2,p)\big)  \right)\\
   &\hspace{1cm}+ \sum_p \left(   - \sum_{ e \geq 2} \frac{ y_p \log p }{ p^{ e( \frac12 + r ) } }  (e-1) \big( g( e,0,p) + g(e-1,1,p) + g( e-2,2,p) \big)  \right)   \\
  &   =- \sum_p      \sum_{ e \geq 1} \frac{y_p \log p }{ p^{ e( \frac12 + r ) } }  \big( 1 + p^{- \frac13} \big) \big( \kappa_e ( p ) + p^{-1} + p^{- \frac43} \big)  \\
  & =- \sum_p      \sum_{ e \geq 1} \frac{  \log p }{ p^{ e( \frac12 + r ) } }   \Big( \beta_e (p)+ x_p \Big( \theta_e+ \frac1p \Big)\Big) ,
   \end{align*}  
   by~\eqref{eqaution definition beta}.
Thus, we have
\begin{align*}
A_{3,\alpha}(r,r)&=R^M_{1,\alpha}(r,r) - \frac{\zeta'}{\zeta}(1+2r)  = -\sum_{p,e\geq 1}  \Big(\theta_e+\frac 1p\Big) \frac{x_p\log p }{p^{e(\frac 12+r)}}- \frac{\zeta'}{\zeta}(1+2r)
\end{align*}
and
\begin{align}
A_{4,\alpha}(r,r)-A_{3,\alpha}(r,r) = -\sum_{p,e\geq 1}   \frac{(\beta_e(p)-p^{-\frac e3}) \log p }{p^{e(\frac 12+r)}},
\end{align}
which are now valid in the extended region $ \Re(r)>0 $. Coming back to~\eqref{equation R_1 calculation}, we deduce that
\begin{align*}\notag 
\frac{\partial }{ \partial \alpha}R_1(\alpha, \gamma;X)\Big|_{\alpha=\gamma=r}  & =A_{3,\alpha}(r,r)+ \frac{\zeta'}{\zeta}(1+2r)  \\&\notag\hspace{1cm}+ \frac{C_2^\pm}{C_1^\pm} X^{-\frac 16}\Big( 1-\frac{C_2^\pm}{C_1^\pm}X^{-\frac{1}{6}}\Big) \Big( A_{4,\alpha}(r,r) - A_{3,\alpha}(r,r)   +\frac{\zeta'}{\zeta} (\tfrac 56+r) \Big) +\text{Error}\\
\notag &=-\sum_{p,e\geq 1}  \Big(\theta_e+\frac 1p\Big) \frac{x_p\log p }{p^{e(\frac 12+r)}} 
-\frac{C_2^\pm}{C_1^\pm} X^{-\frac 16}\Big( 1-\frac{C_2^\pm}{C_1^\pm}X^{-\frac{1}{6}}\Big) \sum_{p,e\geq 1}   \frac{(\beta_e(p)-p^{-\frac e3} )\log p }{p^{e(\frac 12+r)}}\\
&\hspace{1cm}+\frac{C_2^\pm}{C_1^\pm} X^{-\frac 16}\Big( 1-\frac{C_2^\pm}{C_1^\pm}X^{-\frac{1}{6}}\Big)\frac{\zeta'}{\zeta} (\tfrac 56+r)
+\text{Error},
\label{equation contribution R_1 to RC}
\end{align*}
where the second equality is valid in the region $\Re(r)>0$.

We now move to $R_2(\alpha, \gamma ;X).$ We recall that
\begin{equation} 
R_2(\alpha, \gamma;X)=\frac1{N^\pm(X)}\sumD  |D_K|^{-\alpha}\frac{\Ga_{\pm}(\frac 12-\alpha)}{\Ga_{\pm}(\frac 12+\alpha)} \sum_{h,m} \frac{\lambda_K(m)\mu_{K}(h)}{m^{\frac{1}{2}-\alpha}h^{\frac{1}{2}+\gamma}},
\end{equation}
and the Ratios Conjecture recipe tells us that we should replace $\lambda_K(m)\mu_K(h)$ with its average. However, a calculation involving Lemma~\ref{lemma count1} suggests that the terms $ |D_K|^{-\alpha}$ and $\lambda_K(m)\mu_{K}(h)$ have non-negligible covariance. To take this into account, we substitute this step with the use of the following corollary of Lemma~\ref{lemma count1}.

\begin{corollary}
\label{lemma average oscillatory}
Let $m,h\in \mathbb N$, and let $\frac 12 \leq \theta<\frac 56$ and $\omega\geq0$ be such that~\eqref{equation TT local 2 vector form} holds. For $\alpha\in \C$ with $0<\Re(\alpha) < \frac 12$, we have the estimate
\begin{multline*} 
\frac{1}{N^\pm(X)}\sumD |D_K|^{-\alpha} \lambda_K(m)\mu_K(h) 
= \frac{X^{-\alpha}}{1-\alpha} \prod_{p^e\parallel m, p^s \parallel h}f(e,s,p)x_p \\ + X^{-\frac16-\alpha} \Bigg(  \frac{1}{1- \frac{6\alpha}5}   \prod_{p^e\parallel m, p^s \parallel h}g(e,s,p)y_p-   \frac{1}{1-\alpha} \prod_{p^e\parallel m, p^s \parallel h}f(e,s,p)x_p\Bigg) \frac{C_2^\pm}{C_1^\pm} \Big( 1-\frac{C_2^\pm}{C_1^\pm}X^{-\frac{1}{6}}\Big)   \\
+O_{\eps}\bigg((1+|\alpha|)\prod_{ p \mid hm, p^e \parallel m } \big( (2 e + 5)p^{\omega}\big)X^{\theta-1-\Re(\alpha)+\eps}\bigg).
\end{multline*}
\end{corollary}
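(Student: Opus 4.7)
The plan is to derive the corollary from Lemma~\ref{lemma count1} by Abel summation on the weight $|D_K|^{-\alpha}$. Setting
$$S(t):=\sum_{K\in \F^\pm(t)} \lambda_K(m)\mu_K(h),\qquad T(X):=\sum_{K\in\F^\pm(X)}|D_K|^{-\alpha}\lambda_K(m)\mu_K(h),$$
and abbreviating $F := \prod_{p^e\parallel m,\,p^s\parallel h}f(e,s,p)x_p$, $G := \prod_{p^e\parallel m,\,p^s\parallel h}g(e,s,p)y_p$, $a_\pm := C_2^\pm/C_1^\pm$, and $E := \prod_{p\mid hm,\,p^e\parallel m}(2e+5)p^\omega$, partial summation gives
$$T(X) = X^{-\alpha}S(X) + \alpha\int_1^X t^{-\alpha-1}S(t)\,dt.$$

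The first step will be to obtain a clean two-term expansion of $S(t)$. Multiplying the conclusion of Lemma~\ref{lemma count1} by $N^\pm(t) = C_1^\pm t + C_2^\pm t^{5/6} + O(t^\theta)$ and exploiting $C_1^\pm a_\pm = C_2^\pm$, the intermediate $t^{2/3}$-contributions cancel (the cancellation is essentially $(1+a_\pm t^{-1/6})(1-a_\pm t^{-1/6}) = 1 - a_\pm^2 t^{-1/3}$ inserted in the right place), and the residual $t^{1/2}$-term is absorbed into the error since $\theta\geq\tfrac12$. The outcome will be
$$S(t) = C_1^\pm F\,t + C_2^\pm G\,t^{\frac{5}{6}} + O\bigl(E\,t^{\theta+\eps}\bigr).$$

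Substituting this into the Abel formula and computing the elementary integrals $\alpha\int_1^X t^{-\alpha}dt = \alpha(X^{1-\alpha}-1)/(1-\alpha)$ and $\alpha\int_1^X t^{-\alpha-1/6}dt = \alpha(X^{5/6-\alpha}-1)/(5/6-\alpha)$, the bracketed factors $1+\alpha/(1-\alpha)$ and $1+\alpha/(5/6-\alpha)$ will telescope to $1/(1-\alpha)$ and $1/(1-6\alpha/5)$ respectively, producing
$$T(X) = \frac{C_1^\pm F}{1-\alpha}X^{1-\alpha} + \frac{C_2^\pm G}{1-\tfrac{6\alpha}{5}}X^{\frac{5}{6}-\alpha} + O\bigl((1+|\alpha|)\,E\,X^{\theta-\Re(\alpha)+\eps}\bigr),$$
where the factor $1+|\alpha|$ arises from the boundary versus integral contributions of Abel's identity, and the constants coming from the lower endpoints are swallowed in this error.

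Finally, I will divide by $N^\pm(X) = C_1^\pm X(1+a_\pm X^{-1/6}) + O(X^\theta)$ using the geometric expansion
$$\frac{1}{N^\pm(X)} = \frac{1}{C_1^\pm X}\bigl(1 - a_\pm X^{-\frac{1}{6}} + a_\pm^2 X^{-\frac{1}{3}} + O(X^{-\frac{1}{2}} + X^{\theta-1})\bigr),$$
retaining all terms of size down to $X^{-1/3-\Re(\alpha)}$. Collecting the $X^{-1/6-\alpha}$ and $X^{-1/3-\alpha}$ contributions and factoring the bracket $\bigl[G/(1-6\alpha/5) - F/(1-\alpha)\bigr]$ out of both allows one to pull out the common factor $a_\pm(1 - a_\pm X^{-1/6})$, yielding exactly the expression in the statement. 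The main obstacle is purely book-keeping: three expansions (of $N^\pm(t)$, of $S(t)/N^\pm(t)$ from Lemma~\ref{lemma count1}, and of $1/N^\pm(X)$) must be tracked simultaneously to the right order, and the specific cancellation at the $t^{2/3}$ and $X^{-1/3-\alpha}$ levels is precisely what forces the factorised form with $a_\pm(1 - a_\pm X^{-1/6})$ in the statement.
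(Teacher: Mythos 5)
Your proposal is correct and takes essentially the same route as the paper: the paper's proof consists of exactly the Abel-summation identity $\sumD |D_K|^{-\alpha}\lambda_K(m)\mu_K(h)=X^{-\alpha}\sumD\lambda_K(m)\mu_K(h)+\alpha\int_1^Xu^{-\alpha-1}\big(\sumDu\lambda_K(m)\mu_K(h)\big)du$ followed by the instruction to apply Lemma~\ref{lemma count1} and \eqref{equation TT}, and you have simply carried out that bookkeeping explicitly (the cancellation of the $t^{2/3}$-terms in $S(t)$, the telescoping $1+\alpha/(1-\alpha)=1/(1-\alpha)$ and $1+\alpha/(\tfrac56-\alpha)=1/(1-\tfrac{6\alpha}{5})$, the geometric expansion of $1/N^\pm(X)$, and the re-factorization into $a_\pm(1-a_\pm X^{-1/6})$). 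All the steps check out, including the absorption of the boundary constants and the $t^{1/2}$ and $X^{-1/2-\Re(\alpha)}$ remainders into the error using $\theta\ge\tfrac12>\Re(\alpha)$.
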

\begin{proof}
This follows from applying Lemma \ref{lemma count1} and (\ref{equation TT}) to the identity
\begin{multline*}
\sumD |D_K|^{-\alpha} \lambda_K(m)\mu_K(h) =  \int_1^X u^{-\alpha} d\bigg(\sumDu  \lambda_K(m)\mu_K(h)  \bigg) \\
=  X^{-\alpha}  \sumD  \lambda_K(m)\mu_K(h)   + \alpha \int_1^X u^{-\alpha -1 }  \bigg(\sumDu  \lambda_K(m)\mu_K(h)\bigg)    du .
\end{multline*}
\end{proof}

Applying this lemma, we deduce the following heuristic approximation of $R_2 ( \alpha, \gamma ; X)$:
\begin{equation*}\begin{split}
  &  \frac{\Ga_{\pm}(\frac 12-\alpha)}{\Ga_{\pm}(\frac 12+\alpha)} \sum_{h,m} \frac{1}{m^{\frac{1}{2}-\alpha}h^{\frac{1}{2}+\gamma}} \bigg\{ \frac{X^{-\alpha}}{1-\alpha} \prod_{p^e\parallel m, p^s \parallel h}f(e,s,p)x_p\\
& + X^{-\frac 16-\alpha} \frac{C_2^\pm}{C_1^\pm}  \Big( 1-\frac{C_2^\pm}{C_1^\pm}X^{-\frac{1}{6}}\Big)  \Big(  \frac{1}{1-\frac{6\alpha}5}   \prod_{p^e\parallel m, p^s \parallel h}g(e,s,p)y_p-   \frac{1}{1-\alpha} \prod_{p^e\parallel m, p^s \parallel h}f(e,s,p)x_p\Big)  \bigg\}   \\
= & \frac{\Ga_{\pm}(\frac 12-\alpha)}{\Ga_{\pm}(\frac 12+\alpha)}   \bigg\{ X^{-\alpha} \frac{ R_1^M ( - \alpha, \gamma)}{1-\alpha}   + X^{-\frac 16-\alpha}\frac{C_2^\pm}{C_1^\pm}  \Big( 1-\frac{C_2^\pm}{C_1^\pm}X^{-\frac{1}{6}}\Big)   \Big(  \frac{R_1^S ( - \alpha, \gamma) }{1-\frac{6\alpha}5}    -   \frac{R_1^M ( - \alpha, \gamma)}{1-\alpha}  \Big)  \bigg\}  
 \\
= & \frac{\Ga_{\pm}(\frac 12-\alpha)}{\Ga_{\pm}(\frac 12+\alpha)}  \frac{\zeta(1-2\alpha)}{  \zeta( 1 - \alpha + \gamma) }  \bigg\{  X^{-\alpha} \frac{A_3  ( - \alpha, \gamma)}{1-\alpha}    \\
&  + X^{-\frac 16-\alpha}\frac{C_2^\pm}{C_1^\pm}  \Big( 1-\frac{C_2^\pm}{C_1^\pm}X^{-\frac{1}{6}}\Big)   \Big(  \frac{A_4 ( - \alpha, \gamma)}{1- \frac{6\alpha}5}  \frac{  \zeta(\frac 56-\alpha)}{ \zeta( \frac 56+\gamma)}   -   \frac{A_3 ( - \alpha, \gamma) }{1-\alpha} \Big)  \bigg\}   .
\end{split}\end{equation*}
If $\Re( r) $ is positive and small enough, then we expect that 
\begin{equation*}\begin{split}
  \frac{\partial }{ \partial \alpha}R_2(\alpha, \gamma;X)\Big|_{\alpha=\gamma=r}  = &  - \frac{\Ga_{\pm}(\frac 12-r)}{\Ga_{\pm}(\frac 12+r)}   \zeta(1-2r)   \bigg\{ X^{-r}  \frac{A_3  ( - r,r)}{1-r}    \\
&   + X^{-\frac 16-r} \frac{C_2^\pm}{C_1^\pm}  \Big( 1-\frac{C_2^\pm}{C_1^\pm}X^{-\frac{1}{6}}\Big)   \Big(   \frac{\zeta(\tfrac 56-r)}{\zeta(\tfrac 56+r)}  \frac{A_4(-r,r)}{ 1- \frac{6r}5}  -  \frac{ A_3( -r, r)}{1-r} \Big)  \bigg\}+\text{Error}  .
\end{split}\end{equation*}
We arrive at the following conjecture.

\begin{conjecture} \label{ratios-thm}
Let $\frac 12 \leq \theta<\frac 56$ and $\omega\geq0$ be such that~\eqref{equation TT local 2 vector form} holds. There exists $0<\delta<\frac 1{6}$ such that for any fixed $\eps>0$ and for  $r\in \mathbb C$ with $ \frac 1{L}  \ll \Re(r)  <\delta $ and
$|r|\leq X^{\frac \eps 2}$, 
\begin{multline} \label{2nd conj eqn}
\frac 1{N^\pm(X)}\sumD \frac{L'(\frac 12+r,f_K)}{L(\frac 12+r,f_K)}\\
= -\sum_{p,e\geq 1}  \Big(\theta_e+\frac 1p\Big) \frac{x_p\log p }{p^{e(\frac 12+r)}} 
-\frac{C_2^\pm}{C_1^\pm} X^{-\frac 16} \Big( 1-\frac{C_2^\pm}{C_1^\pm}X^{-\frac{1}{6}}\Big) \sum_{p,e\geq 1}   \frac{(\beta_e(p)-p^{-\frac e3}) \log p }{p^{e(\frac 12+r)}} \\
+\frac{C_2^\pm}{C_1^\pm} X^{-\frac 16}\Big( 1-\frac{C_2^\pm}{C_1^\pm}X^{-\frac{1}{6}}\Big)  \frac{\zeta'}{\zeta}(\tfrac 56+r)
-  X^{-r}  \frac{\Ga_{\pm}(\frac 12-r)}{\Ga_{\pm}(\frac 12+r)}\zeta(1-2r)  \frac{A_3( -r, r)}{1-r} \\
- \frac{C_2^\pm }{C_1^\pm  } X^{-r-\frac 16} \Big( 1-\frac{C_2^\pm}{C_1^\pm}X^{-\frac{1}{6}}\Big)   \frac{\Ga_{\pm}(\frac 12-r)}{\Ga_{\pm}(\frac 12+r)}\zeta(1-2r) \Big(  \frac{\zeta(\tfrac 56-r)}{\zeta(\tfrac 56+r)}  \frac{A_4(-r,r)}{ 1- \frac{6r}5}  -  \frac{ A_3( -r, r)}{1-r} \Big) \\
+O_\eps(X^{\theta-1+\eps}).
\end{multline}
Note that the two sums on the right-hand side are absolutely convergent. 
\end{conjecture}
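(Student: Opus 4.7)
The plan is to derive the conjecture by completing the Ratios Conjecture recipe that has already been set up in the preceding discussion. The starting point is the decomposition
$$R(\alpha,\gamma;X) = R_1(\alpha,\gamma;X) + R_2(\alpha,\gamma;X) + \text{Error},$$
obtained by substituting the approximate functional equation~\eqref{approxeq} into the numerator of~\eqref{ralphgam} and the Dirichlet series~\eqref{lemobius} into the denominator, and then discarding the error term and the off-diagonal contributions as prescribed by the recipe.

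First, I would finish the analysis of $R_1(\alpha,\gamma;X)$. Inserting the two-term expansion from Lemma~\ref{lemma count1} into~\eqref{ragone} term by term produces the Euler products $R_1^M(\alpha,\gamma)$ and $R_1^S(\alpha,\gamma)$ of~\eqref{equation definition R_1M}, and I would then factor out $\zeta(1+2\alpha)/\zeta(1+\alpha+\gamma)$ and the secondary ratio $\zeta(\tfrac56+\alpha)/\zeta(\tfrac56+\gamma)$ to isolate the analytically continued pieces $A_3(\alpha,\gamma)$ and $A_4(\alpha,\gamma)$. Differentiating with respect to $\alpha$ is then carried out by Cauchy's formula on a small circle around $\alpha=r$, which legitimises the term-by-term differentiation and preserves the admissible error; evaluating at $\alpha=\gamma=r$ and using the identities $A_3(r,r)=A_4(r,r)=1$ together with the explicit computations of $R^M_{1,\alpha}(r,r)$ and $R^S_{1,\alpha}(r,r)$ (already carried out in the excerpt via the cancellations $f(e,0,p)+f(e-1,1,p)+f(e-2,2,p)=0$, and similarly for $g$) yields the first three lines of~\eqref{2nd conj eqn}.

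Second, I would handle $R_2(\alpha,\gamma;X)$. Here the crucial refinement of the recipe is to replace the naive averaging of $\lambda_K(m)\mu_K(h)$ by the conditional average against $|D_K|^{-\alpha}$ given by Corollary~\ref{lemma average oscillatory}. Summing the resulting factorised Euler products in $(m,h)$ reproduces $R_1^M(-\alpha,\gamma)$ and $R_1^S(-\alpha,\gamma)$, respectively weighted by $(1-\alpha)^{-1}X^{-\alpha}$ and $(1-\tfrac{6\alpha}{5})^{-1}X^{-\frac16-\alpha}\frac{C_2^\pm}{C_1^\pm}(1-\frac{C_2^\pm}{C_1^\pm}X^{-\frac16})$. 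Converting once more to $A_3$ and $A_4$ via the zeta factors, then differentiating with respect to $\alpha$ through Cauchy's integral formula and setting $\alpha=\gamma=r$, produces the last two lines of~\eqref{2nd conj eqn}. The absolute convergence claim for the prime sums follows from the prime number theorem together with $\Re(r)>0$, since $\beta_e(p)-p^{-e/3}\ll_e p^{-1/3}$ in a strong enough sense to ensure convergence uniformly on the domain specified.

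The main obstacle is exactly the one shared by every application of the Ratios Conjecture: the derivation is necessarily heuristic, because the recipe instructs us to drop both the error term in~\eqref{approxeq} and the off-diagonal contributions that arise when swapping the order of summation in $\sum_{h,m}$ against $\sum_{K}$, and there is no rigorous argument that these discarded terms are genuinely of size $O_\eps(X^{\theta-1+\eps})$. In particular, within the $\alpha$-differentiated expression, one must accept that Cauchy's formula on a small circle of radius $\kappa\ll 1$ transmits this heuristic error estimate to the derivative; this is the standard convention in the Ratios Conjecture literature~\cite{CFZ,CS}, and it is what turns the argument into a conjecture rather than a theorem. The genuinely new step in this family is that the refinement of Step 4 (via Lemma~\ref{lemma count1} and Corollary~\ref{lemma average oscillatory}) is required in order to capture the $X^{-1/6}$-scale secondary contribution that~\cite{CFZ}'s original recipe would miss.
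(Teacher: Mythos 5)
Your proposal reproduces the paper's own derivation step for step: substitute the approximate functional equation, split into $R_1$ and $R_2$, use Lemma~\ref{lemma count1} (resp.\ Corollary~\ref{lemma average oscillatory}) to average $\lambda_K(m)\mu_K(h)$ (resp.\ $|D_K|^{-\alpha}\lambda_K(m)\mu_K(h)$), factor out the appropriate $\zeta$-ratios to obtain $A_3$ and $A_4$, differentiate in $\alpha$ via Cauchy's formula, and set $\alpha=\gamma=r$ — with the key refinement over the original CFZ recipe being precisely the two-term expansions of Lemma~\ref{lemma count1} and Corollary~\ref{lemma average oscillatory}. The only small inaccuracy is the remark $\beta_e(p)-p^{-e/3}\ll_e p^{-1/3}$, which fails for $e\geq 2$ (where $\beta_e(p)$ need not tend to zero); however absolute convergence still holds there because $p^{e(1/2+\Re(r))}\geq p^{1+2\Re(r)}$, so this does not affect the argument.
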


Traditionally, when applying the Ratios Conjecture recipe, one has to restrict the real part of the variable $r$ to small enough positive values. For example, in the family of quadratic Dirichlet $L$-functions~\cite{CS,FPS3}, one requires that $\frac1{\log X}\ll\Re(r)<\frac 14$. This ensures that one is far enough from a pole for the expression in the right-hand side. In the current situation, we will see that the term involving $X^{-r-\frac 16}$ has a pole at $s=\frac 16$.

\begin{proposition}
\label{proposition RC}
Assume Conjecture~\ref{ratios-thm} and the Riemann Hypothesis for $\zeta_K(s)$ for all $K\in \F^{\pm}(X)$, and let $\phi$ be a real even Schwartz function such that $ \widehat \phi$ is compactly supported. For any constant $ 0< c < \frac16$, we have that
\begin{multline} 
\frac 1{N^\pm(X)}\sumD \sum_{\gamma_K}\phi \Big(\frac{L\gamma_K}{2\pi }\Big)=\widehat \phi(0)\Big(1+ \frac{\log(4 \pi^2 e)}{L} -\frac{C_2^\pm}{5C_1^\pm} \frac{X^{-\frac 16}}{L} 
+ \frac{(C_2^\pm)^2 }{5(C_1^\pm)^2 } \frac{X^{-\frac 13}}{L} \Big)\\+  \frac1{\pi}\int_{-\infty}^{\infty}\phi\left(\frac{Lr}{2\pi}\right)\Re\Big(\frac{\Ga'_{\pm}}{\Ga_{\pm}}(\tfrac12+ir)\Big)dr   -\frac{2}{L}\sum_{p,e}\frac{x_p\log p}{p^{\frac e2}}\widehat\phi\left(\frac{\log p^e}{L}\right) (\theta_e+\tfrac 1p)\\
-\frac{2C_2^\pm X^{-\frac 16}}{C_1^\pm L}\Big( 1-\frac{C_2^\pm}{C_1^\pm}X^{-\frac{1}{6}}\Big) \sum_{p,e}\frac{\log p}{p^{\frac e2}}\widehat\phi\left(\frac{\log p^e}{L}\right) (\beta_e(p)-p^{-\frac e3})\\
-\frac{1}{\pi i} \int_{(c )} \phi \Big(\frac{Ls}{2\pi i}\Big) \Big\{ -\frac{C_2^\pm}{C_1^\pm} X^{-\frac 16}\Big( 1-\frac{C_2^\pm}{C_1^\pm}X^{-\frac{1}{6}}\Big) \frac{\zeta'}{\zeta}(\tfrac 56+s) +  X^{-s}  \frac{\Ga_{\pm}(\frac 12-s)}{\Ga_{\pm}(\frac 12+s)}\zeta(1-2s)  \frac{A_3( -s, s) }{1-s} \\
+\frac{C_2^\pm }{C_1^\pm} X^{-s-\frac 16}\Big( 1-\frac{C_2^\pm}{C_1^\pm}X^{-\frac{1}{6}}\Big)  \frac{\Ga_{\pm}(\frac 12-s)}{\Ga_{\pm}(\frac 12+s)}\zeta(1-2s) \Big(  \frac{\zeta(\tfrac 56-s)}{\zeta(\tfrac 56+s)}  \frac{A_4(-s,s)}{ 1- \frac{6s}5}  -  \frac{ A_3( -s, s)}{1-s} \Big) \Big\}ds 
\\ + O_\eps(X^{\theta-1+\eps}).
\label{equation proposition RC}
\end{multline}
\end{proposition}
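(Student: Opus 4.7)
The approach is to convert the sum over zeros into a contour integral of $L'/L(s,f_K)$, average over $\F^\pm(X)$, and then substitute Conjecture~\ref{ratios-thm}. As in the proof of Lemma~\ref{lemma explicit formula}, Cauchy's theorem gives
\begin{equation*}
\sum_{\gamma_K}\phi\Big(\tfrac{L\gamma_K}{2\pi}\Big)=\frac{1}{2\pi i}\Bigl(\int_{(\tfrac12+c)}-\int_{(\tfrac12-c)}\Bigr)\phi\Bigl(\tfrac{L(s-\tfrac12)}{2\pi i}\Bigr)\tfrac{\Lambda'}{\Lambda}(s,f_K)\,ds,
\end{equation*}
and the functional equation $\Lambda'/\Lambda(1-s,f_K)=-\Lambda'/\Lambda(s,f_K)$ together with the evenness of $\phi$ merges these into $\frac{1}{\pi i}\int_{(\frac12+c)}$. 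Expanding $\Lambda'/\Lambda=\tfrac12\log|D_K|+\Gamma'_\pm/\Gamma_\pm+L'/L$ and substituting $s\mapsto\tfrac12+s$ yields
\begin{equation*}
\sum_{\gamma_K}\phi\Big(\tfrac{L\gamma_K}{2\pi}\Big)=\tfrac{\log|D_K|}{L}\widehat\phi(0)+\tfrac{1}{\pi i}\int_{(c)}\phi\Bigl(\tfrac{Ls}{2\pi i}\Bigr)\tfrac{\Gamma'_\pm}{\Gamma_\pm}(\tfrac12+s)\,ds+\tfrac{1}{\pi i}\int_{(c)}\phi\Bigl(\tfrac{Ls}{2\pi i}\Bigr)\tfrac{L'}{L}(\tfrac12+s,f_K)\,ds,
\end{equation*}
where I use the elementary identity $\frac{1}{2\pi i}\int_{(c)}\phi(Ls/(2\pi i))\,ds=\widehat\phi(0)/L$, obtained by shifting to $\Re(s)=0$.

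I would then average over $K\in\F^\pm(X)$ and treat the three pieces separately. Lemma~\ref{lemma average log}, combined with the identity $\log X-1=L+\log(4\pi^2 e)$, converts the $\log|D_K|$ contribution into the first line of~\eqref{equation proposition RC}. The gamma integral is $K$-independent; shifting its contour to the imaginary axis (no pole of $\Gamma'_\pm/\Gamma_\pm(\tfrac12+s)$ lies in $0<\Re(s)<\tfrac12$), parametrizing $s=ir$, and using the conjugation $\overline{\Gamma'_\pm/\Gamma_\pm(\tfrac12+ir)}=\Gamma'_\pm/\Gamma_\pm(\tfrac12-ir)$ together with the evenness of $\phi$ produces the gamma term on the second line. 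For the remaining $L'/L$-piece I would invoke Conjecture~\ref{ratios-thm} with $1/L\ll c<\min(\delta,\tfrac16)$. The two prime-sum terms (those involving $\theta_e+\tfrac1p$ and $\beta_e(p)-p^{-e/3}$) are integrated termwise via the Mellin--Fourier identity
\begin{equation*}
\tfrac{1}{2\pi i}\int_{(c)}\phi\Bigl(\tfrac{Ls}{2\pi i}\Bigr)p^{-es}\,ds=\tfrac{1}{L}\widehat\phi\Bigl(\tfrac{e\log p}{L}\Bigr),
\end{equation*}
which produces the two double sums on lines~2 and~3 of~\eqref{equation proposition RC} with the correct prefactors. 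The remaining three terms from Conjecture~\ref{ratios-thm} — the $\zeta'/\zeta(\tfrac56+s)$ piece and the two pieces proportional to $X^{-s}$ and $X^{-s-\tfrac16}$ — cannot be simplified further on a contour with $\Re(s)=c<\tfrac16$ (in particular, $\zeta'/\zeta(\tfrac56+s)$ is holomorphic there), and so they survive as the contour integral on the fourth and fifth lines of~\eqref{equation proposition RC}; the overall sign flip is absorbed into the displayed $-\frac{1}{\pi i}\int_{(c)}$ prefactor. The conjectural error $O_\eps(X^{\theta-1+\eps})$, integrated against the Schwartz factor $\phi(Ls/(2\pi i))$, contributes an error of the same order. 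Finally, any $c\in(0,\tfrac16)$ allowed in the statement can be reached after the substitution by shifting within the holomorphy strip $0<\Re(s)<\tfrac16$, since the pole of $\zeta'/\zeta(\tfrac56+s)$ at $s=\tfrac16$ is not crossed.

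The main obstacle is the rigorous justification of the contour manipulations and the termwise interchange of sum and integral under the standing conjectural hypothesis. Three analytic inputs are required: (i) polynomial control in $|\Im(s)|$ for the explicit main terms and for the error in Conjecture~\ref{ratios-thm}, ensuring absolute convergence of the $(c)$-integrals against $\phi(Ls/(2\pi i))$ and faithful propagation of the error; (ii) a careful treatment of the truncation $|r|\leq X^{\eps/2}$ built into Conjecture~\ref{ratios-thm}, handled by combining the Riemann Hypothesis for $\zeta_K$ — which gives $L'/L(\tfrac12+s,f_K)\ll\log^2(|s|(|D_K|+2))$ uniformly — with the Schwartz decay of $\phi$ to kill the tail; and (iii) the verification that all prime sums appearing from the conjecture converge absolutely on $\Re(s)=c$ (for $c>0$ they do, since $\sum_{p,e}p^{-e/2-ec}\log p<\infty$), which legitimizes the termwise integration in the Mellin--Fourier identity above. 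Once these are in place, combining the four contributions assembles exactly the right-hand side of~\eqref{equation proposition RC}.
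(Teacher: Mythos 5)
Your proposal is correct and takes essentially the same approach as the paper. The only difference is bookkeeping: you apply the residue theorem to $\Lambda'/\Lambda$ on contours $\Re(s)=\tfrac12\pm c$ and decompose $\Lambda'/\Lambda$ into $\tfrac12\log|D_K|+\Gamma'_\pm/\Gamma_\pm+L'/L$ \emph{before} averaging, whereas the paper works with $L'/L$ directly on $\Re(s)=\tfrac12\pm\tfrac1L$ and produces the $\log|D_K|$ and gamma pieces as correction terms from the functional equation applied to the left-hand contour, shifting to $\Re(s)=c$ only after substituting Conjecture~\ref{ratios-thm}. Both yield the same explicit-formula decomposition and both need the same three analytic inputs you flagged (growth of $\phi$ in vertical strips, the truncation $|r|\leq X^{\eps/2}$ handled by the RH bound on $L'/L$ plus Schwartz decay, and absolute convergence of the prime sums on $\Re(s)=c$ to justify termwise integration); the paper handles these by citing \eqref{equation decay phi complex} and the bounds \eqref{trivial bounds for A3 and A4}. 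Your remark that $c$ can then be taken anywhere in $(0,\tfrac16)$ by a contour shift within the holomorphy strip of the main terms is exactly what the paper invokes as well.
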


\begin{proof}
By the residue theorem, we have the identity
\begin{align}
\frac 1{N^\pm(X)}\sumD \D_\phi(K)= \frac{1}{2 \pi i} \left( \int_{(\frac1L)} - \int_{(-\frac1L)} \right) \frac 1{N^\pm(X)}\sumD \frac{L'(s+\frac 12,f_K)}{L(s+\frac 12,f_K)}\phi \Big(\frac{Ls}{2\pi i}\Big)ds.
\label{equation residue theorem}
\end{align}
Under Conjecture~\ref{ratios-thm} and well-known arguments (see e.g.~\cite[Section 3.2]{FPS3}), the part of this sum involving the first integral is equal to  
\begin{multline*}
-\frac{1}{2\pi  i} \int_{(\frac1L)} \phi \Big(\frac{Ls}{2\pi i}\Big) \bigg\{\sum_{p,e\geq 1} \Big(\theta_e+\frac 1p\Big) \frac{x_p\log p }{p^{e(\frac 12+s)}} 
+\frac{C_2^\pm}{C_1^\pm} X^{-\frac 16} \Big( 1-\frac{C_2^\pm}{C_1^\pm}X^{-\frac{1}{6}}\Big) \sum_{p,e\geq 1}   \frac{(\beta_e(p)-p^{-\frac e3}) \log p }{p^{e(\frac 12+s)}} \\-\frac{C_2^\pm}{C_1^\pm} X^{-\frac 16}\Big( 1-\frac{C_2^\pm}{C_1^\pm}X^{-\frac{1}{6}}\Big) \frac{\zeta'}{\zeta}(\tfrac 56+s) + X^{-s}   \frac{\Ga_{\pm}(\frac 12-s)}{\Ga_{\pm}(\frac 12+s)}\zeta(1-2s)  \frac{A_3( -s, s) }{1-s} \\
+\frac{C_2^\pm }{C_1^\pm}X^{-s-\frac 16}\Big( 1-\frac{C_2^\pm}{C_1^\pm}X^{-\frac{1}{6}}\Big)   \frac{\Ga_{\pm}(\frac 12-s)}{\Ga_{\pm}(\frac 12+s)}\zeta(1-2s) \Big(  \frac{\zeta(\tfrac 56-s)}{\zeta(\tfrac 56+s)}  \frac{A_4(-s,s)}{ 1- \frac{6s}5}  -  \frac{ A_3( -s, s)}{1-s} \Big) \bigg\} ds \\+O_\eps(X^{\theta-1+\eps}),
\end{multline*}
where we used the bounds \eqref{trivial bounds for A3 and A4} and 
\begin{align}
 \phi\Big(\frac{Ls}{2\pi i}\Big)  = \frac { (-1)^\ell} {L^\ell s^\ell }\int_{\mathbb R} e^{ L \Re(s) x} e^{ i L \Im(s) x} \widehat \phi^{(\ell)}(x) dx  \ll_\ell  \frac{e^{L|\Re(s)| \sup({\rm supp}(\widehat \phi))  } }{L^\ell |s|^\ell } 
 \label{equation decay phi complex}
\end{align} 
for every integer $\ell>0 $, which is decaying on the line $\Re(s) = \frac 1L$.   We may also shift the contour of integration to the line $\Re(s)=c$ with $ 0 < c < \frac16$.

For the second integral in~\eqref{equation residue theorem} (over the line $\Re(s)=-\frac{1}{L}$), we treat it as follows. By the functional equation \eqref{equation functional equation}, we have
\begin{align*}
- \frac{1}{2 \pi i} &    \int_{(-\frac1L)}   \frac 1{N^\pm(X)}\sumD \frac{L'(s+\frac 12,f_K)}{L(s+\frac 12,f_K)}\phi \Big(\frac{Ls}{2\pi i}\Big)ds \\
 = &  \frac{1}{2 \pi i}  \int_{(\frac1L)}   \frac 1{N^\pm(X)}\sumD \frac{L'(s+\frac 12,f_K)}{L(s+\frac 12,f_K)}\phi \Big(\frac{Ls}{2\pi i}\Big)ds  \\
 & + \frac{1}{2 \pi i}      \int_{(-\frac1L)}   \frac 1{N^\pm(X)}\sumD \left( \log |D_K| + \frac{ \Gamma'_{\pm}}{\Gamma_\pm} ( \tfrac12 + s  )+ \frac{ \Gamma'_{\pm}}{\Gamma_\pm} ( \tfrac12 - s  )   \right)\phi \Big(\frac{Ls}{2\pi i}\Big)ds. 
\end{align*}
The first integral on the right-hand side is identically equal to the integral that was just evaluated in the first part of this proof.
As for the second, by shifting the contour to the line $\Re (s)=0$, we find that it equals
\begin{align*}
  &  \left(  \frac 1{N^\pm(X)}\sumD   \log |D_K|  \right)  \frac{1}{2 \pi i}      \int_{(0)} \phi \Big(\frac{Ls}{2\pi i}\Big)ds    +  \frac{1}{2 \pi i}      \int_{(0)} \left( \frac{ \Gamma'_{\pm}}{\Gamma_\pm} ( \tfrac12 + s  )+ \frac{ \Gamma'_{\pm}}{\Gamma_\pm} ( \tfrac12 - s  )   \right)  \phi \Big(\frac{Ls}{2\pi i}\Big)ds \\
  &  = \left(  \frac 1{N^\pm(X)}\sumD   \log |D_K|  \right) \frac{  \widehat{\phi} (0)}{L} + \frac{1}{ \pi} \int_{- \infty}^\infty \phi \Big(\frac{Lr}{2\pi  }\Big) \Re \left(  \frac{ \Gamma'_{\pm}}{\Gamma_\pm} ( \tfrac12 + ir  ) \right)dr .
\end{align*}
By applying Lemma \ref{lemma average log} to the first term, we find the leading terms on the right-hand side of \eqref{equation proposition RC}.

 
 Finally, by absolute convergence we have the identity
\begin{align*}
\frac{1}{2\pi i} \int_{(c)}\phi \Big(\frac{Ls}{2\pi i}\Big)\sum_{p,e\geq 1} \Big(\theta_e+\frac 1p\Big) \frac{x_p\log p }{p^{e(\frac 12+s)}} ds
&= \sum_{p,e\geq 1} \Big(\theta_e+\frac 1p\Big) \frac{x_p\log p } {p^{\frac e2}} \frac 1{2\pi i} \int_{(c)}\phi \Big(\frac{Ls}{2\pi i}\Big) p^{-es}ds\\
&=\frac{1}{L}\sum_{p,e\geq 1} \Big(\theta_e+\frac 1p\Big) \frac{x_p\log p }{p^{\frac e2}}\widehat \phi\Big( \frac{e\log p}{L} \Big),
\end{align*}
since the contour of the inner integral can be shifted to the line $\Re(s)=0$. The same argument works for the term involving $\beta_e(p)-p^{-\frac e3}$. Hence, the proposition follows.
\end{proof}

\section{Analytic continuation of $A_3 ( -s, s ) $ and $A_4 ( -s , s )$}

The goal of this section is to prove Theorem \ref{theorem RC}. To do so, we will need to estimate some of the terms in \eqref{equation proposition RC}, namely 
\begin{equation}\label{J X def}\begin{split}
J^\pm & (X) :=    \frac{2C_2^\pm X^{-\frac 16}}{C_1^\pm L}\Big( 1-\frac{C_2^\pm}{C_1^\pm}X^{-\frac{1}{6}}\Big) \sum_{p,e}\frac{\log p}{p^{\frac {5e}{6}}}\widehat\phi\left(\frac{\log p^e}{L}\right)  \\
& -\frac{1}{\pi i} \int_{(c )} \phi \Big(\frac{Ls}{2\pi i}\Big) \Big\{ -\frac{C_2^\pm}{C_1^\pm} X^{-\frac 16}\Big( 1-\frac{C_2^\pm}{C_1^\pm}X^{-\frac{1}{6}}\Big) \frac{\zeta'}{\zeta}(\tfrac 56+s) +  X^{-s}   \frac{\Ga_{\pm}(\frac 12-s)}{\Ga_{\pm}(\frac 12+s)}\zeta(1-2s) \frac{ A_3( -s, s) }{1-s}\\
& +\frac{C_2^\pm }{C_1^\pm} X^{-s-\frac 16}\Big( 1-\frac{C_2^\pm}{C_1^\pm}X^{-\frac{1}{6}}\Big)  \frac{\Ga_{\pm}(\frac 12-s)}{\Ga_{\pm}(\frac 12+s)}\zeta(1-2s) \Big(   \frac{\zeta(\tfrac 56-s)}{\zeta(\tfrac 56+s)}  \frac{A_4(-s,s)}{ 1- \frac{6s}5}  -  \frac{ A_3( -s, s)}{1-s} \Big) \Big\}ds,  
\end{split}\end{equation}
for $ 0 < c < \frac16 $. The idea is to provide an analytic continuation to the Dirichlet series $A_3(-s,s)$ and $A_4(-s,s)$ in the strip $ 0<\Re(s) < \frac 12$, and to shift the contour of integration to the right. 
 
\begin{lemma}
\label{lemma A_3 analytic continuation}
The product formula
\begin{equation}\label{A3 prod form}
  A_3 ( -s, s) =  \zeta(3)  \zeta( \tfrac32 - 3s )  \prod_p  \bigg( 1   - \frac{1}{  p^{\frac32 +s}} + \frac{1}{p^{\frac52 - s}} - \frac{1}{p^{\frac52 -3s}}   - \frac{1}{p^{3-4s}} + \frac{1}{p^{\frac92 - 5s}} \bigg)
  \end{equation}
provides an analytic continuation of $A_3 (-s, s)$ to $|\Re ( s )| < \frac12$ except for a simple pole at $s= \frac16$ with residue $$-\frac{\zeta(3)}{3\zeta(\frac53)\zeta(2)}. $$
\end{lemma}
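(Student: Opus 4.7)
The plan is a direct Euler-factor computation. Starting from the definition $A_3(\alpha,\gamma) = \frac{\zeta(1+\alpha+\gamma)}{\zeta(1+2\alpha)} R_1^M(\alpha,\gamma)$ in \eqref{equation definition A_3}, I would specialize to $\alpha=-s$, $\gamma=s$ and fold the zeta factors into the Euler product prime-by-prime, writing $A_3(-s,s) = \prod_p B_p(s)$ with
\begin{equation*}
B_p(s) = \frac{1-p^{-1}}{1-p^{-(1-2s)}}\, L_p(-s,s),
\end{equation*}
where $L_p$ denotes the local factor of $R_1^M$ in \eqref{equation definition R_1M}. This sidesteps the apparent pole of $\zeta(1+\alpha+\gamma)$ at $\alpha+\gamma=0$, which is cancelled by a corresponding zero of $R_1^M(-s,s)$.

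The main computational task is to evaluate $L_p(-s,s)$ in closed form. Using the explicit expressions for $f(e,0,p)$, $f(e,1,p)$, $f(e,2,p)$ from Lemma~\ref{lemma count1}, each of the three inner sums in \eqref{equation definition R_1M} decomposes into four geometric-type series matching the four arithmetic components of $f$: the polynomial piece $(e+1)/6$, the mod-$2$ periodic piece $(1+(-1)^e)/4$, the mod-$3$ periodic piece $\tau_e/3$, and the constant piece $1/p$. With $x=p^{-(1/2-s)}$ and $y=p^{-(1/2+s)}$ (so that $xy=1/p$), each contribution is a rational function whose denominator divides $(1-x)^2(1-x^2)(1-x^3)$. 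Combining with $(1-p^{-1})/(1-p^{-(1-2s)})$ and simplifying, the result should collapse to
\begin{equation*}
B_p(s) = \frac{1 - p^{-(3/2+s)} + p^{-(5/2-s)} - p^{-(5/2-3s)} - p^{-(3-4s)} + p^{-(9/2-5s)}}{(1-p^{-3})(1-p^{-(3/2-3s)})}.
\end{equation*}
Taking the product over $p$ and identifying $\prod_p (1-p^{-3})^{-1} = \zeta(3)$ and $\prod_p (1-p^{-(3/2-3s)})^{-1} = \zeta(3/2-3s)$ yields the claimed product formula. The residual Euler product converges absolutely in the strip $|\Re(s)|<1/2$, since each exponent among $\{3/2+s,\, 5/2-s,\, 5/2-3s,\, 3-4s,\, 9/2-5s\}$ has real part strictly greater than $1$ in that region.

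For the pole analysis, only the explicit factor $\zeta(3/2-3s)$ can contribute a pole in $|\Re(s)|<1/2$, giving a simple pole at $s=1/6$ with residue $-1/3$. Substituting $s=1/6$ into the residual numerator produces exponents $5/3, 7/3, 2, 7/3, 11/3$; the two $p^{-7/3}$ terms cancel, leaving $1-p^{-5/3}-p^{-2}+p^{-11/3} = (1-p^{-5/3})(1-p^{-2})$. The product over $p$ at $s=1/6$ is therefore $1/(\zeta(5/3)\zeta(2))$, so the residue of $A_3(-s,s)$ is $-\zeta(3)/(3\zeta(5/3)\zeta(2))$, matching the statement.

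The principal obstacle is the algebraic bookkeeping needed to reduce the local factor to the compact five-term numerator: after combining the four arithmetic components across three inner sums, one initially has on the order of a dozen rational-function summands over a common denominator, and verifying that the huge number of terms in the numerator telescopes to the displayed polynomial is delicate. To guard against errors I would cross-check by expanding both sides as Dirichlet series in $p^{-s}$ for fixed $p$, matching the first several coefficients, and by numerically verifying the residue identity at $s=1/6$ against a truncated evaluation of the defining Euler product.
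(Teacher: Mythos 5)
Your plan follows the paper's proof in outline: both fold the $\zeta$-factors into a single Euler product for $A_3(-s,s)$, evaluate the local factor by splitting the $e$-sum into geometric series, and read off the analytic continuation and residue from the closed form. Your residue computation at $s=\frac16$ and your convergence check in $|\Re(s)|<\frac12$ are correct. Two points need repair, however.

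First, the local factor you wrote is inverted. Since $A_3(\alpha,\gamma)=\frac{\zeta(1+\alpha+\gamma)}{\zeta(1+2\alpha)}R_1^M(\alpha,\gamma)$, specializing to $\alpha=-s$, $\gamma=s$ produces the local multiplier $\frac{1-p^{-(1-2s)}}{1-p^{-1}}$ in front of the local factor of $R_1^M$, not $\frac{1-p^{-1}}{1-p^{-(1-2s)}}$ as you wrote. Your subsequent displayed closed form for $B_p(s)$ is correct, so this is evidently a slip; but as stated, that intermediate step is false and would lead to a different product if carried through literally.

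Second, and more substantively, the reduction of the local factor to the five-term numerator over $(1-p^{-3})(1-p^{-(3/2-3s)})$ is asserted (``should collapse to'') but never carried out, and that identity is the whole content of the product formula. The paper's route to making this tractable is to first combine $f(e,0,p)+f(e,1,p)\,p^{-(\frac12+s)}+f(e,2,p)\,p^{-(1+2s)}$ before summing over $e$: with $y=p^{-(\frac12+s)}$, the combined coefficient of each arithmetic component factors neatly, namely $\frac16(1-y)^2$ for the polynomial piece, $\frac12(1-y^2)$ for the parity piece, $\frac13(1+y+y^2)$ for the $\tau_e$ piece, and $\frac1p(1-y)$ for the constant piece. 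Summing each against $x^e$ with $x=p^{-(\frac12-s)}$ via $\sum_{k\geq0}(k+1)x^k=(1-x)^{-2}$, $\sum_{k\geq0}\frac{1+(-1)^k}{2}x^k=(1-x^2)^{-1}$, $\sum_{k\geq0}\tau_k x^k=(1+x+x^2)^{-1}$, and $\sum_{k\geq0}x^k=(1-x)^{-1}$ then leaves only four rational summands over the common denominator $(1-x)^2(1+x)(1+x+x^2)$, after which the combination with the extra $\frac{1}{p^2}$ term and the $(1-x^2)$ multiplier is a finite computation. Your version, which expands each of the three inner sums separately, produces roughly a dozen summands and is genuinely harder to close; either way, the proof must exhibit the identity, not merely propose a numerical sanity check.
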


\begin{proof}
From~\eqref{equation definition R_1M} and~\eqref{equation definition A_3}, we see that in the region $ | \Re (s) | < \frac 16$,
\begin{align} \notag
 A_3 ( -s, s) & = \prod_p \bigg( 1 - \frac1{p^3} \bigg)^{-1} \bigg( 1 - \frac1{p^{1-2s}}\bigg) \\  &\hspace{1.5cm}\times\bigg( 1+ \frac1p + \frac1{p^2} +   \sum_{e \geq 1 }  \frac{ f(e,0,p)}{p^{e(\frac 12-s)}} +   \sum_{ e \geq 0 }  \frac{f(e,1,p)}{p^{e(\frac 12-s)+\frac12+s}} +  \sum_{e \geq 0 }  \frac{f(e,2,p)}{ p^{e(\frac 12-s)+1+2s }} \bigg)  \notag \\
 & = \zeta(3)  \prod_p   \bigg( 1 - \frac1{p^{1-2s}}\bigg) \bigg(   \frac1{p^2} +   \sum_{e \geq 0 }  \frac{1}{p^{e(\frac 12-s)}}  \bigg(  f(e,0,p)  +   \frac{f(e,1,p)}{p^{\frac 12+s}} +   \frac{f(e,2,p)}{ p^{1+2s }}\bigg) \bigg). \label{equation A3 section 5}
\end{align}
The sum over $e\geq 0$ on the right-hand side is equal to
\begin{multline}
\label{equation geometric series}
     \frac16 \bigg( 1 - \frac{1}{ p^{\frac 12+s}} \bigg)^2  \sum_{e \geq 0 } (e+1) \frac{1}{p^{e(\frac 12-s)}}   +  \frac12 \bigg( 1 - \frac{1}{p^{1+2s}} \bigg) \sum_{e \geq 0 } \frac{ 1+(-1)^e}{2}  \frac{1}{p^{e(\frac 12-s)}} \\
   + \frac13 \bigg( 1 + \frac{1}{p^{\frac 12+s}}+ \frac{1}{p^{1+2s}} \bigg) \sum_{e \geq 0 }\tau_e \frac{1}{p^{e(\frac 12-s)}}  + \frac1p \bigg(  1- \frac{1}{p^{\frac 12+s}}\bigg) \sum_{e \geq 0 } \frac{1}{p^{e(\frac 12-s)}}\\  
    =  \frac16 \cdot \frac{ \Big(1 - \frac{1}{ p^{\frac 12+s}} \Big)^2}{ \Big(1 - \frac{1}{ p^{\frac 12-s}}\Big)^2}  +  \frac12 \cdot \frac{  1 - \frac{1}{p^{1+2s}} }{ 1 - \frac{1}{p^{1-2s}} }  + \frac13 \cdot \frac{ 1 + \frac{1}{p^{\frac 12+s}}+ \frac{1}{p^{1+2s}}}{ 1 + \frac{1}{p^{\frac 12-s}}+ \frac{1}{p^{1-2s}}  }   + \frac1p  \cdot \frac{  1- \frac{1}{p^{\frac 12+s}}}{  1- \frac{1}{p^{\frac 12-s}} }.
\end{multline}
Here, we have used geometric sum identities, e.g.,
\begin{align*}
\sum_{k=0}^\infty \tau_k x^k & = \sum_{k=0}^\infty x^{3k} - \sum_{k=0}^\infty x^{3k+1} =  \frac{1-x}{1-x^3 } = \frac{1}{1+x+x^2 }  \qquad (|x| < 1).
\end{align*}
Inserting the expression~\eqref{equation geometric series} in~\eqref{equation A3 section 5} and simplifying, we obtain the identity 
\begin{equation*}
  A_3 ( -s, s) =  \zeta(3)  \zeta( \tfrac32 - 3s )  \prod_p  \bigg( 1   - \frac{1}{  p^{\frac 32+s}}   + \frac{1}{p^{\frac 52 - s}} - \frac{1}{p^{\frac 52-3s}}   - \frac{1}{p^{3-4s}} + \frac{1}{p^{\frac 92 - 5s}} \bigg) 
  \end{equation*}
 in the region $|\Re(s)|<1/6$. Now, this clearly extends to 
 $  |  \Re ( s ) |  < 1/2$ except for a simple pole at $ s= 1/6 $ with residue equal to
$$ -\frac{\zeta(3)}{3}  \prod_p  \big( 1 - p^{-\frac53} - p^{-2} + p^{-\frac{11}{3}}\big)  = - \frac{ \zeta(3)}{3} \frac{ 1}{ \zeta(\frac53) \zeta(2)} , $$
as desired.
\end{proof}

\begin{lemma}
\label{lemma A_4 analytic continuation}
Assuming RH, the function $A_4(-s,s)$ admits an analytic continuation to the region $|\Re(s)|< \frac 12$, except for a double pole at $s=\frac 16$. Furthermore, for any $0<\eps<\frac 14$ and in the region $|\Re(s)|< \frac 12-\eps$, we have the bound
$$ A_4(-s,s)  \ll_\eps (|\Im(s)|+1)^{ \frac23} . $$  
\end{lemma}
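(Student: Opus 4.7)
I plan to follow the same strategy as in the proof of Lemma~\ref{lemma A_3 analytic continuation}, combined with more elaborate bookkeeping to handle the $p^{-k/3}$ powers coming from $y_p$ and the coefficients $g(e,j,p)$. The plan has three main steps, plus the growth bound.

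First, using the explicit formulas for $g(e,0,p)$, $g(e,1,p)$, $g(e,2,p)$ from Lemma~\ref{lemma count1}, I would substitute into~\eqref{equation definition R_1M} at $(\alpha,\gamma) = (-s, s)$ and evaluate each inner sum over $e$ in closed form by grouping coefficients according to their $e$-dependence: the $(e+1)$ terms via $\sum_{e \geq 0}(e+1) x^e = (1-x)^{-2}$, the $\tfrac{1+(-1)^e}{2}$ terms via $\sum_{e\geq 0}\tfrac{1+(-1)^e}{2}x^e = (1-x^2)^{-1}$, the $\tau_e$ terms via $\sum_{e\geq 0}\tau_e x^e = (1+x+x^2)^{-1}$, and the constant coefficients via $\sum_{e\geq 0}x^e = (1-x)^{-1}$, where $x = p^{-(1/2-s)}$ and $u = p^{-(1/2+s)}$ (so that $xu = p^{-1}$). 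After incorporating $y_p = (1-p^{-1/3})/[(1-p^{-5/3})(1+p^{-1})]$, each local factor of $R_1^S(-s,s)$ becomes an explicit rational expression in $p^{\pm s}$, $p^{-1/3}$ and $p^{-1}$.

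Next, to obtain $A_4(-s,s)$, I would multiply by the $(-s, s)$-specialization of the prefactor $\zeta(5/6+\gamma)\zeta(1+\alpha+\gamma)/[\zeta(5/6+\alpha)\zeta(1+2\alpha)]$ from~\eqref{equation definition A_4}, handling the apparent $\zeta(1)$ singularity via the bivariate analyticity of $A_4$ in $\Re(\alpha), \Re(\gamma) > -1/6$ (equivalently, via the cancellation between the $\zeta(1+\alpha+\gamma)$ in the prefactor and the leading expansion of $R_1^S$ noted right before~\eqref{equation definition A_4}). Expanding each zeta as a local Euler product and combining with the local factors from the first step, I would then rearrange so as to extract a finite number of global zeta functions, leaving an absolutely convergent Euler product whose local factor is $1 + O(p^{-1-\delta})$ in the strip $|\Re(s)|<1/2$ for some $\delta > 0$. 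In the analogous computation for $A_3$, this rearrangement produced the single extra factor $\zeta(3/2-3s)$. For $A_4$, I expect it to produce a factor with a pole at $s = 1/6$ (again $\zeta(3/2-3s)$ or a shifted analogue), which combines with the $\zeta(5/6+s)$ already present in the definition of $A_4$ to produce the claimed double pole at $s = 1/6$. A direct check that no other extracted zeta has a pole inside $|\Re(s)|<1/2$ rules out additional singularities.

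Finally, for the bound $A_4(-s,s) \ll_\eps (1+|\Im(s)|)^{2/3}$, I would apply the Lindel\"of-type estimates available under RH, namely $\zeta(\sigma+it) \ll_\eps (1+|t|)^{\max(0,\,1/2-\sigma)+\eps}$ (together with $1/\zeta(\sigma+it) \ll_\eps (1+|t|)^\eps$ for $\sigma > 1/2$, by absolute convergence and non-vanishing under RH), to each extracted zeta factor, and bound the residual Euler product trivially. A careful tracking of the combined exponents, supplemented by a Phragm\'en--Lindel\"of interpolation across the strip $|\Re(s)|\leq 1/2-\eps$, should yield the growth with exponent $2/3$. The main technical obstacle throughout is the rearrangement step: the $p^{-k/3}$ contributions for $k=1,\ldots,5$ coming from the $g$-coefficients and from $y_p$ substantially complicate the combinatorics compared to the $A_3$ case, and one must carefully identify precisely those zeta factors whose poles lie inside $|\Re(s)|<1/2$ rather than absorbing them into the convergent Euler product.
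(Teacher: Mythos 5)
Your plan follows essentially the same route as the paper's proof: close-form the $e$-sums via geometric-series identities, specialize the prefactor from~\eqref{equation definition A_4} at $(\alpha,\gamma)=(-s,s)$, successively peel off zeta factors until an absolutely convergent Euler product remains, and then bound the extracted zetas under RH. One small correction to your expectations, though: the double pole at $s=\tfrac16$ does not arise from the $\zeta(\tfrac56+\gamma)$ in the defining prefactor --- that pole is already cancelled, since $A_4(\alpha,\gamma)$ is analytic for $\Re(\alpha),\Re(\gamma)>-\tfrac16$ --- but from two distinct extracted factors, $\zeta(\tfrac32-3s)$ and $\zeta(\tfrac43-2s)$, each contributing a simple pole at $s=\tfrac16$; the role of RH in the continuation statement is to ensure that the remaining extracted zetas, whose arguments have real part $>\tfrac12$ throughout $|\Re(s)|<\tfrac12$, contribute no additional zeros in the denominator. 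Also, the Phragm\'en--Lindel\"of interpolation you propose is unnecessary: applying the Lindel\"of-type bounds $\zeta(\sigma+it)\ll_\eps(1+|t|)^{\max(0,\frac12-\sigma)+\eps}$ and $1/\zeta(\sigma+it)\ll_\eps(1+|t|)^\eps$ (for $\sigma>\tfrac12$) directly to each extracted factor gives exponents up to $\tfrac12$ and $\tfrac16$ from $\zeta(\tfrac32-3s)$ and $\zeta(\tfrac43-2s)$ respectively, which sum to the stated $\tfrac23$.
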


\begin{proof}
By~\eqref{equation definition A_4} and~\eqref{equation definition R_1M}, for $ | \Re (s) | < \frac16$ we have that
\begin{align*}
 A_4 ( -s, s)   = & \prod_p   \frac{ \Big(1- \frac1{p^{1-2s}}\Big)\Big(  1-   \frac1{p^{\frac56-s}}\Big)\Big( 1 - \frac1{p^{\frac13}} \Big)  }{ \Big(1- \frac1{p^2}\Big)\Big( 1- \frac1{p^{\frac56+s}}\Big) \Big(1- \frac1{p^{\frac53}}\Big) }    \Bigg(  \frac1{p^2} \Big( 1+ \frac1{p^{\frac13}}\Big)  +   \sum_{e \geq 0 }  \frac{     g(e,0,p)  +   \frac{g(e,1,p)}{p^{\frac12+s}} +   \frac{g(e,2,p)}{ p^{1+2s }} }{p^{e(\frac 12-s)}}  \Bigg),
\end{align*}
since $  y_p^{-1} - g(0,0,p) =   \frac1{p^2} \Big( 1+  \frac1{p^{\frac13}}\Big)   $. Recalling the definition of $g(e,j,p)$ (see Lemma~\ref{lemma count1}), a straightforward evaluation of the infinite sum over $e\geq 0$ yields the expression
\begin{multline*}
 A_4 ( -s, s)    
  =  \zeta(2) \zeta(\tfrac53) \prod_p   \frac{ \Big(1- \frac1{p^{1-2s}}\Big)\Big(  1-   \frac1{p^{\frac56 -s}}\Big)\Big( 1 - \frac1{p^{\frac13}}\Big)  }{  \Big( 1- \frac1{p^{\frac56 +s}}\Big)   }    \Bigg(   \frac{ \Big(1+\frac{1}{p^{ \frac13}} \Big)^3\Big(  1 - \frac{1}{ p^{\frac12+s}} \Big)^2 }{ 6    \Big(   1 - \frac{1}{ p^{\frac12-s}}   \Big)^2}  \\
  +  \frac{ \Big(1+\frac{1}{p^{ \frac13}} \Big)\Big(1+\frac{1}{p^{ \frac23}} \Big)  \Big( 1 - \frac{1}{p^{1+2s}}\Big)   }{2 \Big( 1 - \frac{1}{p^{1-2s}}\Big)}   
  + \frac{ \Big( 1+\frac1p \Big)     \Big( 1 + \frac{1}{p^{\frac12 +s}}+ \frac{1}{p^{1+2s}}\Big)   }{ 3 \Big( 1 + \frac{1}{p^{\frac12-s}} + \frac{1}{p^{1-2s}}  \Big) }
  + \frac{\Big(1+\frac{1}{p^{ \frac13}} \Big)^2  \Big( 1- \frac{1}{p^{\frac12+s}}\Big) }{ p\Big(  1- \frac{1}{p^{\frac12-s}}\Big)  } +  \frac{  1+\frac{1}{p^{ \frac13}} }{p^2}     \Bigg).
\end{multline*}  
Isolating the "divergent terms" leads us to the identity
\begin{align*}
A_4 ( -s, s)     
 = & \zeta(2) \zeta(\tfrac53) \prod_p (D_{4,p,1}(s) + A_{4,p,1}(s) ),  
\end{align*}
where
\begin{multline*}
D_{4,p,1}(s):=   \frac{   1-   \frac1{p^{\frac56-s}}   }{    1- \frac1{p^{\frac56+s}}     }    \Bigg(   \frac{ \Big(1+ \frac2{p^{\frac13}}  - \frac2p   \Big)  \Big(1 - \frac{1}{ p^{\frac12+s}} \Big)^2 \Big(1 + \frac1{p^{\frac12-s}}\Big)  }{ 6\Big( 1 - \frac{1}{ p^{\frac12-s}}\Big)}     \\
+  \frac{    1 - \frac{1}{p^{1+2s}}     }{2}  + \frac{\Big(1   - \frac1{p^{\frac13}} +\frac1p   \Big)\Big( 1 + \frac{1}{p^{\frac12+s}}+ \frac{1}{p^{1+2s}} \Big) \Big(1- \frac1{p^{1-2s}}\Big)  }{3\Big( 1 + \frac{1}{p^{\frac12-s}} + \frac{1}{p^{1-2s}} \Big) }     + \frac1p    \Bigg)
\end{multline*}
and
\begin{align*}
A_{4,p,1}(s) := &  \frac{   1-   \frac1{p^{\frac56-s}}   }{    1- \frac1{p^{\frac56+s}}     }    \Bigg(  -  \frac{    \Big(1 - \frac{1}{ p^{\frac12+s}} \Big)^2 \Big(1 + \frac1{p^{\frac12-s}}\Big)  }{ 6 p^{\frac43} \Big( 1 - \frac{1}{ p^{\frac12-s}}\Big)}       -  \frac{    1 - \frac{1}{p^{1+2s}}     }{2p^{\frac43}}  -  \frac{ \Big( 1 + \frac{1}{p^{\frac12+s}}+\frac{1}{p^{1+2s}} \Big) \Big(1- \frac1{p^{1-2s}}\Big)  }{3 p^{\frac43} \Big( 1 + \frac{1}{p^{\frac12-s}} + \frac{1}{p^{1-2s}} \Big) }  \\
& \quad    + \frac{\Big(1+ \frac1{p^{\frac13}}  - \frac1{p^{\frac23}}- \frac1p \Big) \Big( 1- \frac{1}{p^{\frac12+s}}\Big)\Big(  1+ \frac{1}{p^{\frac12-s}} \Big) -1  }{p}  +  \frac1{p^2} \Big( 1- \frac1{p^{\frac23}}\Big)  \Big(1- \frac1{p^{1-2s}}\Big)   \Bigg).
\end{align*}
The term $A_{4,p,1}(s)$ is "small" for $ | \Re (s)|< \frac12$, hence we will concentrate our attention on $D_{4,p,1}(s)$. We see that
$$
D_{4,p,1}(s)
 =  \frac{   1-   \frac1{p^{\frac56-s}}   }{    1- \frac1{p^{\frac56+s}}     }     D_{4,p,2}(s)
 + \frac1p +  A_{4,p,2}(s) ,
$$
where
$$ D_{4,p,2}(s):=    \frac{ \Big(1+ \frac2{p^{\frac13}}  \Big)  \Big(1 - \frac{1}{ p^{\frac12+s}} \Big)^2 \Big(1 + \frac1{p^{\frac12-s}}\Big)  }{ 6\Big( 1 - \frac{1}{ p^{\frac12-s}}\Big)}   
 +  \frac{    1 - \frac{1}{p^{1+2s}}     }{2}  \\ + \frac{\Big(1   - \frac1{p^{\frac13}}    \Big)\Big( 1 + \frac{1}{p^{\frac12+s}}+ \frac{1}{p^{1+2s}} \Big) \Big(1- \frac1{p^{1-2s}}\Big)  }{3\Big( 1 + \frac{1}{p^{\frac12-s}} + \frac{1}{p^{1-2s}} \Big) }    $$
and
\begin{align*}
A_{4,p,2}(s):=    \frac{ \Big(  1-   \frac1{p^{\frac56-s}} \Big)  }{ p \Big(   1- \frac1{p^{\frac56+s}}   \Big)  }      \Bigg(  & -   \frac{  \Big(1 - \frac{1}{ p^{\frac12+s}} \Big)^2 \Big(1 + \frac1{p^{\frac12-s}}\Big)  }{ 3\Big( 1 - \frac{1}{ p^{\frac12 -s}}\Big)}   
+ \frac{ \Big( 1 + \frac{1}{p^{\frac12+s}}+ \frac{1}{p^{1+2s}} \Big) \Big(1- \frac1{p^{1-2s}}\Big)  }{3\Big( 1 + \frac{1}{p^{\frac12-s}} + \frac{1}{p^{1-2s}} \Big) }       +1  \Bigg) - \frac1p,
\end{align*}
which is also "small". Taking common denominators and expanding out shows that
$$ D_{4,p,2}(s)=\frac{1}{  1-  \frac{1}{p^{\frac32 -3s}} } \bigg( 1 - \frac1p        - \frac{1}{p^{\frac56+s}} + \frac{1}{p^{\frac56-s}}   + \frac1{p^{\frac43-2s}} +  A_{4,p,3}(s) \bigg),$$
where
$$ A_{4,p,3}(s) :=      - \frac{1}{p^{\frac32-s}} + \frac{1}{ p^{\frac52-s}}         - \frac1{p^{\frac43}} + \frac{1}{p^{\frac{11}6+s}}   - \frac1{p^{\frac{11}6 -s}} + \frac1{p^{\frac73} } - \frac{1}{p^{\frac73-2s}} $$
is "small". More precisely, for $ | \Re (s) |\leq \frac12 - \eps < \frac12 $ and $ j = 1,2,3$,  we have the bound 
$ A_{4,p,j } (s)  = O_\eps  \Big( \frac{1}{p^{1+\eps}}\Big)  $. Therefore,
\begin{align}
\label{equation before A4tilde}
A_4 ( -s, s)    
=  \zeta(2) \zeta(\tfrac53)  \zeta( \tfrac32-3s) \widetilde{A}_4 (s)  \prod_p \Bigg(    \frac{\Big(  1-   \frac1{p^{\frac56-s}}\Big)  }{  \Big( 1- \frac1{p^{\frac56+s}}\Big)   }        \bigg( 1       - \frac{1}{p^{\frac56+s}} + \frac{1}{p^{\frac56-s}}   + \frac1{p^{\frac43-2s}}        \bigg)   \Bigg) ,
\end{align}
where
\begin{align*}
 \widetilde{A}_4 (s) := &     \prod_p \left(  1 + \frac{ \frac1p \bigg(  1-  \frac{1}{p^{\frac32 -3s}} -  \frac{   1-    p^{-\frac56+s}   }{   1-  p^{- \frac56-s}    }    \bigg)     +    \frac{   1-    p^{-\frac56+s}   }{   1-  p^{- \frac56-s}    }         A_{4,p,3}(s)      +  \Big( 1-  \frac{1}{p^{\frac32-3s}}\Big)(A_{4,p,2}(s) + A_{4,p,1}(s)) }{     \frac{   1-    p^{-\frac56+s}   }{   1-  p^{- \frac56-s}    }        \Big( 1       - \frac{1}{p^{\frac56+s}} + \frac{1}{p^{\frac56-s}}   + \frac1{p^{\frac43-2s}}    \Big)      } \right)
\end{align*}
is absolutely convergent for $|\Re (s) | < \frac12 $. Hence, the final step is to find a meromorphic continuation for the infinite product on the right-hand side of~\eqref{equation before A4tilde}, which we will denote by $D_3(s)$. However, it is straightforward to show that
\begin{equation}\label{definition D3s} 
A_{4,4}(s):= D_3(s)  \frac{ \zeta(\tfrac83-4s) \zeta(\tfrac53-2s) \zeta(\tfrac{13}6-3s) } { \zeta(\tfrac43-2s) \zeta(\tfrac{13}6-s)}  
\end{equation}
converges absolutely for $|\Re(s)|<\frac 12$. This finishes the proof of the first claim in the lemma.

Finally, the growth estimate  
$$ A_4(-s,s)\ll_\eps (|\Im(s)|+1)^{\eps} | \zeta( \tfrac32 - 3s ) \zeta( \tfrac43 - 2s)| \ll_\eps (|\Im(s)|+1)^{ \frac23}  $$  
follows from~\eqref{equation before A4tilde},~\eqref{definition D3s}, as well as~\cite[Theorems 13.18 and 13.23]{MV} and the functional equation for $\zeta(s)$.
\end{proof}

Now that we have a meromorphic continuation of $A_4(-s,s)$, we will calculate the leading Laurent coefficient at $s=\frac 16$.

\begin{lemma}\label{lemma A4 limit at 16}
We have the formula
$$ \lim_{s\rightarrow \frac 16} (s-\tfrac 16)^2 A_4(-s,s) = \frac16  \frac{ \zeta(2)\zeta(\tfrac 53)}{\zeta( \tfrac43)} \prod_p \Big(1-\frac 1{p^{\frac 23}}\Big)^2   \Big(1- \frac1p \Big)\Big(1+\frac 2{p^{\frac 23}}+\frac 1p+\frac 1{p^{\frac 43}}\Big) . $$
\end{lemma}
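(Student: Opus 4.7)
The plan is to extract the double pole at $s=\tfrac16$ from the factorization derived in the proof of Lemma~\ref{lemma A_4 analytic continuation}, and then evaluate the remaining analytic Euler products at $s=\tfrac16$. That proof gives
$$A_4(-s,s) = \zeta(2)\,\zeta(\tfrac 53)\,\zeta(\tfrac 32-3s)\,\widetilde A_4(s)\,D_3(s),$$
and by~\eqref{definition D3s},
$$D_3(s) = A_{4,4}(s)\,\frac{\zeta(\tfrac 43-2s)\,\zeta(\tfrac{13}6-s)}{\zeta(\tfrac 83-4s)\,\zeta(\tfrac 53-2s)\,\zeta(\tfrac{13}6-3s)}.$$
Since $\widetilde A_4(s)$ and $A_{4,4}(s)$ are analytic at $s=\tfrac16$, the double pole of $A_4(-s,s)$ arises exclusively from the two simple poles of $\zeta(\tfrac 32-3s)$ and $\zeta(\tfrac 43-2s)$.

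Using $\zeta(w)\sim (w-1)^{-1}$ near $w=1$, I compute $\lim_{s\to 1/6}(s-\tfrac 16)\zeta(\tfrac 32-3s)=-\tfrac 13$ and $\lim_{s\to 1/6}(s-\tfrac 16)\zeta(\tfrac 43-2s)=-\tfrac 12$, whose product is $\tfrac 16$. Substituting $s=\tfrac 16$ into the remaining zeta factors yields $\zeta(\tfrac{13}6-\tfrac 16)=\zeta(2)$, $\zeta(\tfrac 83-\tfrac 23)=\zeta(2)$, $\zeta(\tfrac 53-\tfrac 13)=\zeta(\tfrac 43)$ and $\zeta(\tfrac{13}6-\tfrac 12)=\zeta(\tfrac 53)$. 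After cancellation, the limit reduces to
$$\lim_{s\to 1/6}(s-\tfrac 16)^2 A_4(-s,s)=\frac{1}{6}\cdot\frac{\zeta(2)}{\zeta(\tfrac 43)}\cdot\widetilde A_4(\tfrac 16)\,A_{4,4}(\tfrac 16).$$
To evaluate $A_{4,4}(\tfrac 16)$, I plug $s=\tfrac 16$ into its local factor: the local factor of $D_3$ simplifies via $(1-p^{-2/3})(1+p^{-2/3})=1-p^{-4/3}$ to $(1-p^{-4/3})/(1-p^{-1})$, and multiplying by the local factors of the zeta quotient in~\eqref{definition D3s} causes substantial cancellation, leaving $(1-p^{-5/3})^{-1}$. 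Hence $A_{4,4}(\tfrac 16)=\zeta(\tfrac 53)$, which produces the expected prefactor $\tfrac{1}{6}\cdot \zeta(2)\zeta(\tfrac 53)/\zeta(\tfrac 43)$.

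The main obstacle is then to compute $\widetilde A_4(\tfrac 16)$ directly from its Euler product, as defined at the end of the proof of Lemma~\ref{lemma A_4 analytic continuation}. The plan is to evaluate $A_{4,p,1}(\tfrac 16)$, $A_{4,p,2}(\tfrac 16)$ and $A_{4,p,3}(\tfrac 16)$ from their explicit definitions (for instance, a short computation gives $A_{4,p,3}(\tfrac 16)=-2p^{-4/3}-p^{-5/3}+2p^{-7/3}$), substitute them into the numerator, combine with the denominator $[(1-p^{-2/3})/(1-p^{-1})](1+p^{-2/3})=(1-p^{-4/3})/(1-p^{-1})$, and simplify. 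Tracking the resulting rational functions in $p^{-1/3}$ through the algebra, the $p$-factor of $\widetilde A_4(\tfrac 16)$ should collapse to $(1-p^{-2/3})^2(1-p^{-1})(1+2p^{-2/3}+p^{-1}+p^{-4/3})$, completing the proof. This final step is purely algebraic but is the only nontrivial computation in the argument, since one must carefully collect terms across six separate pieces of the numerator of $\widetilde A_4(s)$.
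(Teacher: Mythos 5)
Your framework is sound and takes a genuinely different route from the paper. The paper forms the quotient $A_4(-s,s)/\bigl(\zeta(\tfrac32-3s)\zeta(\tfrac43-2s)\bigr)$, which by the structure established in Lemma~\ref{lemma A_4 analytic continuation} has a convergent Euler product in $|\Re(s)|<\tfrac13$; it then passes the limit inside the product and substitutes $s=\tfrac16$ directly into the \emph{original} explicit Euler factor of $A_4(-s,s)$ (the one written before the $A_{4,p,j}$ and $D_{4,p,j}$ decomposition was introduced), producing a single five-term bracket. You instead keep the factorization $A_4(-s,s)=\zeta(2)\zeta(\tfrac53)\zeta(\tfrac32-3s)\widetilde A_4(s)D_3(s)$, rewrite $D_3$ via~\eqref{definition D3s}, and separate the evaluation into $A_{4,4}(\tfrac16)$ and $\widetilde A_4(\tfrac16)$. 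Your extraction of the residues of $\zeta(\tfrac32-3s)$ and $\zeta(\tfrac43-2s)$, your evaluation of the remaining zeta values, your reduction of the local factor of $D_3$ at $s=\tfrac16$ to $(1-p^{-4/3})/(1-p^{-1})$, your resulting identity $A_{4,4}(\tfrac16)=\zeta(\tfrac53)$, and your spot-check $A_{4,p,3}(\tfrac16)=-2p^{-4/3}-p^{-5/3}+2p^{-7/3}$ are all correct. However, the heart of the matter — that $\widetilde A_4(\tfrac16)$ equals the displayed Euler product — is only asserted with a plan, not carried out, and that plan requires re-assembling the scattered correction terms $A_{4,p,1},A_{4,p,2},A_{4,p,3}$ and $D_{4,p,1},D_{4,p,2}$ from the proof of Lemma~\ref{lemma A_4 analytic continuation}. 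The paper's route is shorter precisely because it bypasses this re-assembly and uses the compact local factor directly; the residual algebra there amounts to checking that the five-term bracket times $(1-p^{-1/3})$ equals $(1-p^{-4/3})(1+2p^{-2/3}+p^{-1}+p^{-4/3})$, which the paper also leaves to the reader. So both arguments terminate at an analogous elementary identity; yours buys a clean intermediate value $A_{4,4}(\tfrac16)=\zeta(\tfrac53)$ at the cost of a more involved final simplification.
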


\begin{proof}
By Lemma \ref{lemma A_4 analytic continuation}, $A_4 ( -s, s ) $ has a double pole at $ s= \frac16 $. Moreover, by \eqref{equation before A4tilde} and \eqref{definition D3s} we find that $  \frac{ A_4 ( -s, s)}{ \zeta( \frac32 - 3s ) \zeta ( \frac43 - 2s ) } $ has a convergent Euler product in the region $|\Re(s)| < \frac 13$ (this allows us to interchange the order of the limit and the product in the calculation below), so that 
\begin{align*}
\lim_{s \to \frac16 }  & (s-\tfrac16 )^2   A_4 ( -s , s)   =     \frac{1}{6} \lim_{s \to \frac16}  \frac{ A_4 ( -s, s)}{ \zeta( \frac32 - 3s ) \zeta ( \frac43 - 2s ) }    \\
= &  \frac{  \zeta(2)\zeta(\tfrac 53) }{6}   \prod_p  \Big(1-\frac1p \Big)     \Big(1-\frac 1{p^{\frac 23}}\Big)^2 \Big(1-\frac 1{p^{\frac 13}}\Big)      \Bigg\{   \frac{ \Big(1+\frac 1{p^{\frac 13}}\Big)^3  \Big(1 - \frac{1}{ p^{\frac23}}\Big)^2 }{  6  \Big(   1 - \frac{1}{ p^{\frac13}}\Big)^2 }     \\
& +  \frac{ \Big(1+ \frac1{p^{ \frac13}} \Big) \Big( 1+ \frac1{p^{ \frac23}} \Big)   \Big( 1 - \frac{1}{p^{\frac43}}\Big) }{2\Big( 1 - \frac{1}{p^{\frac23}}\Big) }  +  \frac{ \Big(1+ \frac1p  \Big)\Big(1 + \frac{1}{p^{\frac23}}+ \frac{1}{p^{\frac43}}\Big) }{3 \Big( 1 + \frac{1}{p^{\frac13}} + \frac{1}{p^{\frac23}} \Big)  }    + \frac{ \Big( 1 + \frac1{p^{ \frac13}} \Big)^2  \Big(  1- \frac{1}{p^{\frac23}} \Big)  }{p\Big(  1- \frac{1}{p^{\frac13}} \Big) } +  \frac{  1+ \frac1{p^{\frac13}}    }{p^2}  \Bigg\}  .
\end{align*}
The claim follows.
\end{proof}

We are now ready to estimate $J^\pm(X)$ when the support of $\widehat \phi$ is small.

\begin{lemma}
\label{lemma transition term small}
Let $\phi$ be a real even Schwartz function such that $\sigma=\sup({\rm supp} (\widehat \phi)) <1$. Let $J^\pm(X)$ be defined by \eqref{J X def}.
Then we have the estimate
$$J^\pm(X) =  C^\pm \phi \Big( \frac{L }{12 \pi i}\Big) X^{-\frac 13}+O_{\eps} \Big(X^{\frac {\sigma-1}2+\eps }\Big), $$ 
where 
\begin{equation} \label{equation definition C}
C^\pm  := \frac{5}{12} \frac{C_2^\pm }{C_1^\pm}   \frac{\Ga_{\pm}(\frac 13)}{\Ga_{\pm}(\frac 23)}   \frac{ \zeta(\tfrac 23)^2 \zeta(\tfrac 53) \zeta(2) }{  \zeta( \tfrac43)}   \prod_p \Big(1-\frac 1{p^{\frac 23}}\Big)^2   \Big(1- \frac1p \Big)\Big(1+\frac 2{p^{\frac 23}}+\frac 1p+\frac 1{p^{\frac 43}}\Big) . 
\end{equation}
\end{lemma}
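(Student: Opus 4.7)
The plan is to start from the representation \eqref{equation definition J(X)} of $J^\pm(X)$ as a sum of two contour integrals --- call them $J_1$ (on $\Re s=\tfrac15$) and $J_2$ (on $\Re s=\tfrac 1{20}$) --- and to shift each to the line $\Re s=\tfrac12-\eps$ for some small $\eps>0$. By Lemmas~\ref{lemma A_3 analytic continuation} and~\ref{lemma A_4 analytic continuation}, the analytic continuations of $A_3(-s,s)$ and $A_4(-s,s)$ to the strip $|\Re s|<\tfrac12$ have a unique singularity at $s=\tfrac 16$; hence the integrand of $J_1$ is holomorphic throughout $\tfrac 15\le\Re s\le\tfrac 12-\eps$ and its shift produces no residue, while the shift for $J_2$ crosses exactly one pole.

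For the bound on the shifted integrals I will combine (i) Stirling's formula to get $|\Ga_{\pm}(\tfrac12-s)/\Ga_{\pm}(\tfrac12+s)|\ll(1+|\Im s|)^{-2\Re s}$, (ii) the convexity bound for $\zeta$ applied to each of the factors $\zeta(1-2s)$ and $\zeta(\tfrac 56\pm s)$, (iii) the growth estimate $|A_4(-s,s)|\ll(1+|\Im s|)^{2/3}$ from Lemma~\ref{lemma A_4 analytic continuation} together with the analogous polynomial bound on $A_3(-s,s)$ coming from \eqref{A3 prod form}, and (iv) the decay $|\phi(Ls/(2\pi i))|\ll X^{\sigma\Re s}(L|s|)^{-N}$ for any $N\ge 0$, obtained by $N$-fold integration by parts in the Fourier representation of $\phi$ as in \eqref{equation decay phi complex}. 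Along $\Re s=\tfrac12-\eps$ the prefactor $|X^{-s}|=X^{-1/2+\eps}$ combines with $X^{\sigma(1/2-\eps)}$ from $\phi$ to give $J_1\ll X^{(\sigma-1)/2+\eps'}$; the shifted $J_2$-integral carries an additional $X^{-1/6}$ and is therefore even smaller.

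The contour shift for $J_2$ contributes a factor of $2$ times the residue at $s=\tfrac 16$ of its integrand. Using the Laurent expansion $1/\zeta(\tfrac 56+s)=(s-\tfrac 16)+O((s-\tfrac 16)^2)$, the double pole of $A_4(-s,s)$ is reduced to a simple pole of $A_4(-s,s)/\zeta(\tfrac 56+s)$ with residue $L_2:=\lim_{s\to 1/6}(s-\tfrac 16)^2 A_4(-s,s)$; combined with $\zeta(\tfrac 56-s)/(1-\tfrac{6s}5)|_{s=1/6}=\tfrac 54\zeta(\tfrac 23)$ and the finite values of the remaining factors at $s=\tfrac 16$, this yields the leading residue contribution
\[
\frac{5}{2}\frac{C_2^\pm}{C_1^\pm}\frac{\Ga_\pm(\tfrac 13)}{\Ga_\pm(\tfrac 23)}\zeta(\tfrac 23)^2\, L_2\,\phi\!\left(\tfrac{L}{12\pi i}\right)X^{-1/3},
\]
which equals $C^\pm\phi(L/(12\pi i))X^{-1/3}$ after substituting the explicit formula for $L_2$ from Lemma~\ref{lemma A4 limit at 16} into \eqref{equation definition C}. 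The remaining residue contributions --- those coming from the $-\tfrac{C_2^\pm}{C_1^\pm}X^{-1/6}$ expansion and from the $A_3$-piece of the integrand of $J_2$ --- each carry an extra $X^{-1/6}$ factor and are hence $O(X^{-1/2+\sigma/6})=O_\eps(X^{(\sigma-1)/2+\eps})$ whenever $\sigma\ge 0$.

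The main obstacle is to carefully verify that the shifted integrals really are absolutely $O_\eps(X^{(\sigma-1)/2+\eps})$: the polynomial growth in $|\Im s|$ of $\zeta(1-2s)$, $\zeta(\tfrac 56-s)$ and $A_4(-s,s)$ must be outweighed by the combination of the Gamma quotient decay and the Schwartz-type decay of $\phi(Ls/(2\pi i))$. Once that uniform control is secured, the residue calculation at $s=\tfrac 16$ is a routine application of the Laurent expansions supplied by Lemmas~\ref{lemma A_3 analytic continuation}--\ref{lemma A4 limit at 16}.
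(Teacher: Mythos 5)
Your proposal is correct in outcome and the residue computation at $s=\tfrac 16$ exactly matches the paper's: your coefficient $\tfrac 52\,\tfrac{C_2^\pm}{C_1^\pm}\,\tfrac{\Ga_\pm(1/3)}{\Ga_\pm(2/3)}\zeta(\tfrac 23)^2 L_2$, combined with Lemma~\ref{lemma A4 limit at 16}, reproduces $C^\pm$. The bounds along $\Re s = \tfrac 12-\eps$ also go through via~\eqref{equation decay phi complex} together with the polynomial growth estimates for $\zeta$, $A_3(-s,s)$ and $A_4(-s,s)$, exactly as you sketch.

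However, your route is genuinely different from the paper's, and the difference matters. You begin from~\eqref{equation definition J(X)}, where the first integral ($J_1$) is already placed on $\Re s = \tfrac 15$, to the right of the pole at $s=\tfrac 16$. The lemma, though, defines $J^\pm(X)$ by~\eqref{J X def}, whose integral sits on a line $\Re s = c < \tfrac 16$. Passing from~\eqref{J X def} to~\eqref{equation definition J(X)} requires a contour shift of the $\zeta'/\zeta$ and $X^{-s}A_3$ pieces across $s=\tfrac 16$, and this shift produces an $X^{-\tfrac 16}$-order residue — the first line of~\eqref{residue 16}. The paper shows that this residue \emph{vanishes identically} because of the special-function identity
\[
\frac{C_2^\pm}{C_1^\pm}\;=\;\frac 25\,\frac{\Ga_\pm(\tfrac 13)}{\Ga_\pm(\tfrac 23)}\,\frac{\zeta(\tfrac 23)\zeta(3)}{\zeta(\tfrac 53)\zeta(2)},
\]
which relies on the explicit values of $C_1^\pm$, $C_2^\pm$ and the functional and duplication formulas. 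This cancellation is the delicate point: without it the error would be $\Theta(X^{\sigma/6-1/6})$, which for small $\sigma$ is far larger than the asserted $O_\eps(X^{(\sigma-1)/2+\eps})$ and would destroy the lemma. By starting from~\eqref{equation definition J(X)} you take this cancellation as given, whereas the paper's proof works directly with the single combined integral~\eqref{J X integral eqn1} on $\Re s = c$, shifts once to $\Re s = \tfrac 12-\tfrac\eps 2$, and observes the vanishing as part of the residue~\eqref{residue 16}, also handling the $\zeta'/\zeta$ piece via~\eqref{equation log der zeta} after the shift. Your argument is not circular — the paper's derivation of~\eqref{equation definition J(X)} does not invoke this lemma — but to be self-contained starting from~\eqref{J X def} as the lemma requires, you would need to either reproduce that derivation or verify the cancellation explicitly.
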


\begin{proof}
We rewrite the integral in $J^\pm(X)$ as
\begin{multline}\label{J X integral eqn1}
\frac{1}{2 \pi i} \int_{(c)} (-2) \phi \Big(\frac{Ls}{2\pi i}\Big)\Big\{  \Big( 1-\frac{C_2^\pm}{C_1^\pm}X^{-\frac{1}{6}}\Big)\Big(-\frac{C_2^\pm}{C_1^\pm} X^{-\frac 16} \frac{\zeta'}{\zeta}(\tfrac 56+s)+  X^{-s}  \frac{\Ga_{\pm}(\frac 12-s)}{\Ga_{\pm}(\frac 12+s)}\zeta(1-2s)  \frac{A_3( -s, s)}{1-s} \Big)\\
+\frac{C_2^\pm }{C_1^\pm} X^{-s-\frac 16} \Big( 1-\frac{C_2^\pm}{C_1^\pm}X^{-\frac{1}{6}}\Big) \frac{\Ga_{\pm}(\frac 12-s)}{\Ga_{\pm}(\frac 12+s)}   \zeta(1-2s) \frac{\zeta(\tfrac 56-s)}{\zeta(\tfrac 56+s)} \frac{A_4(-s,s)}{ 1- \frac{6s}5} 
\\+\Big(\frac{C_2^\pm}{C_1^\pm}\Big)^2 X^{-s-\frac 13}   \frac{\Ga_{\pm}(\frac 12-s)}{\Ga_{\pm}(\frac 12+s)}\zeta(1-2s) \frac{ A_3( -s, s)}{1-s} 
\Big\}ds 
\end{multline}
for $ 0 < c < \frac16$. The integrand has a simple pole at $s=\frac 16$ with residue 
\begin{align} 
&   -2\phi \Big( \frac{L }{12 \pi i}\Big) \Big( 1-\frac{C_2^\pm}{C_1^\pm}X^{-\frac{1}{6}}\Big) X^{-\frac 16} \Big(\frac{C_2^\pm}{C_1^\pm}    - \frac25     \frac{\Ga_{\pm}(\frac 13)}{\Ga_{\pm}(\frac  23)}   \frac{ \zeta(\tfrac23 ) \zeta(3)}{   \zeta( \tfrac53 ) \zeta(2)}    \Big)  \notag \\
 & -2   \phi \Big( \frac{L }{12 \pi i}\Big) \frac{C_2^\pm }{C_1^\pm} X^{-\frac 13} \frac{\Ga_{\pm}(\frac 13)}{\Ga_{\pm}(\frac 23)}   \frac{5\zeta(\tfrac 23)^2}{4} \lim_{s\rightarrow \frac 16} (s-\tfrac 16)^2A_4(-s,s)\label{residue 16}
+O\Big(\phi\Big( \frac{L}{12 \pi i}\Big) X^{-\frac 12} \Big) \\
 &= -{C^\pm}  \phi \Big( \frac{L }{12 \pi i}\Big)  X^{- \frac13} + O( X^{\frac{\sigma}{6}- \frac12}) \notag
\end{align} 
by Lemma~\ref{lemma A4 limit at 16} as well as the fact that the first line vanishes. Due to Lemmas~\ref{lemma A_3 analytic continuation} and~\ref{lemma A_4 analytic continuation}, we can shift the contour of integration to the line $\Re(s)=\frac 12 - \frac{\eps}{2}$, at the cost of $-1$ times the residue~\eqref{residue 16}. 

We now estimate the shifted integral. The term involving $\frac{\zeta'}{\zeta}(\tfrac 56+s)$ can be evaluated by interchanging sum and integral; we obtain the identity
 \begin{equation}\label{equation log der zeta}
  \frac{1}{\pi i} \int_{(\frac 12-\frac \eps 2)} \phi \Big(\frac{Ls}{2\pi i}\Big)  \frac{\zeta'}{\zeta}(\tfrac 56+s)ds \\
= - \frac 2 L\sum_{p,e}\frac{\log p}{p^{\frac {5e}6}}\widehat\phi\Big(\frac{\log p^e}{L}\Big)  .
\end{equation}
The last step is to bound the remaining terms, which is carried out by combining~\eqref{equation decay phi complex} with Lemmas~\ref{lemma A_3 analytic continuation} and~\ref{lemma A_4 analytic continuation}.
\end{proof}
  
Finally, we complete the proof of Theorem \ref{theorem RC}.

\begin{proof}[Proof of Theorem \ref{theorem RC}]
Given Proposition~\ref{proposition RC} and Lemma~\ref{lemma transition term small}, the only thing remaining to prove is~\eqref{equation definition J(X)}. Applying~\eqref{J X integral eqn1} with $c=\frac 1{20}$ and splitting the integral into two parts, we obtain the identity
\begin{multline*} 
J^\pm (X)  =    \frac{2C_2^\pm X^{-\frac 16}}{C_1^\pm L}\Big( 1-\frac{C_2^\pm}{C_1^\pm}X^{-\frac{1}{6}}\Big) \sum_{p,e}\frac{\log p}{p^{\frac {5e}{6}}}\widehat\phi\left(\frac{\log p^e}{L}\right) \\
-\frac{1}{\pi i} \int_{(\frac{1}{20})} \phi \Big(\frac{Ls}{2\pi i}\Big)\Big\{  \Big( 1-\frac{C_2^\pm}{C_1^\pm}X^{-\frac{1}{6}}\Big)\Big(-\frac{C_2^\pm}{C_1^\pm} X^{-\frac 16} \frac{\zeta'}{\zeta}(\tfrac 56+s)+  X^{-s}  \frac{\Ga_{\pm}(\frac 12-s)}{\Ga_{\pm}(\frac 12+s)}\zeta(1-2s) \frac{ A_3( -s, s)}{1-s} \Big) \Big\} ds\\
-\frac{1}{\pi i} \int_{(\frac{1}{20})} \phi \Big(\frac{Ls}{2\pi i}\Big)\Big\{  \frac{C_2^\pm }{C_1^\pm} X^{-s-\frac 16} \Big( 1-\frac{C_2^\pm}{C_1^\pm}X^{-\frac{1}{6}}\Big) \frac{\Ga_{\pm}(\frac 12-s)}{\Ga_{\pm}(\frac 12+s)}   \zeta(1-2s) \frac{\zeta(\tfrac 56-s)}{\zeta(\tfrac 56+s)} \frac{A_4(-s,s)}{ 1- \frac{6s}5} 
\\+\Big(\frac{C_2^\pm}{C_1^\pm}\Big)^2X^{-s-\frac 13} \frac{\Ga_{\pm}(\frac 12-s)}{\Ga_{\pm}(\frac 12+s)}\zeta(1-2s) \frac{ A_3( -s, s) }{1-s} 
\Big\}ds.
\end{multline*}
By shifting the first integral to the line $\Re(s) = \frac15$ and applying \eqref{equation log der zeta}, we derive \eqref{equation definition J(X)}. Note that the residue at $s=\frac16$ is the first line of $(\ref{residue 16})$, which is equal to zero.
\end{proof}

\appendix

\section{Numerical investigations}



\label{appendix}
In this section we present several graphs\footnote{The computations associated to these graphs were done using development version 2.14 of pari/gp (see \url{https://pari.math.u-bordeaux.fr/Events/PARI2022/talks/sources.pdf}), and the full code can be found here: \url{https://github.com/DanielFiorilli/CubicFieldCounts} .} associated to the error term $$ E^+_{p} (X,T) := N^+_{p} (X,T) -A^+_p (T ) X -B^+_p(T) X^{\frac 56}.$$
We recall that we expect a bound of the form $ E^+_{p} (X,T) \ll_{\eps} p^{ \omega}X^{\theta+\eps} $ (see~\eqref{equation TT local 2}). Moreover, from the graphs shown in Figure~\ref{figure intro}, it seems likely that $\theta=\frac 12$ is admissible and best possible. 
Now, to test the uniformity in $p$, we consider the function
$$ f_p(X,T):=  \max_{1\leq x\leq X} x^{-\frac 12}|E^+_{p} (x,T)|;$$
we then expect a bound of the form $f_p(X,T) \ll_\eps p^{\omega}X^{\theta-\frac 12+\eps}$ with $\theta$ possibly equal to $\frac 12$. 
To predict the smallest admissible value of $\omega$, in Figure~\ref{figure T123}  we plot $f_{p}(10^4,T_j)$ for $j=1,2,3,$ as a function of $p<10^4$. 
\begin{figure}[h]

\begin{center}

\includegraphics[scale=.58]{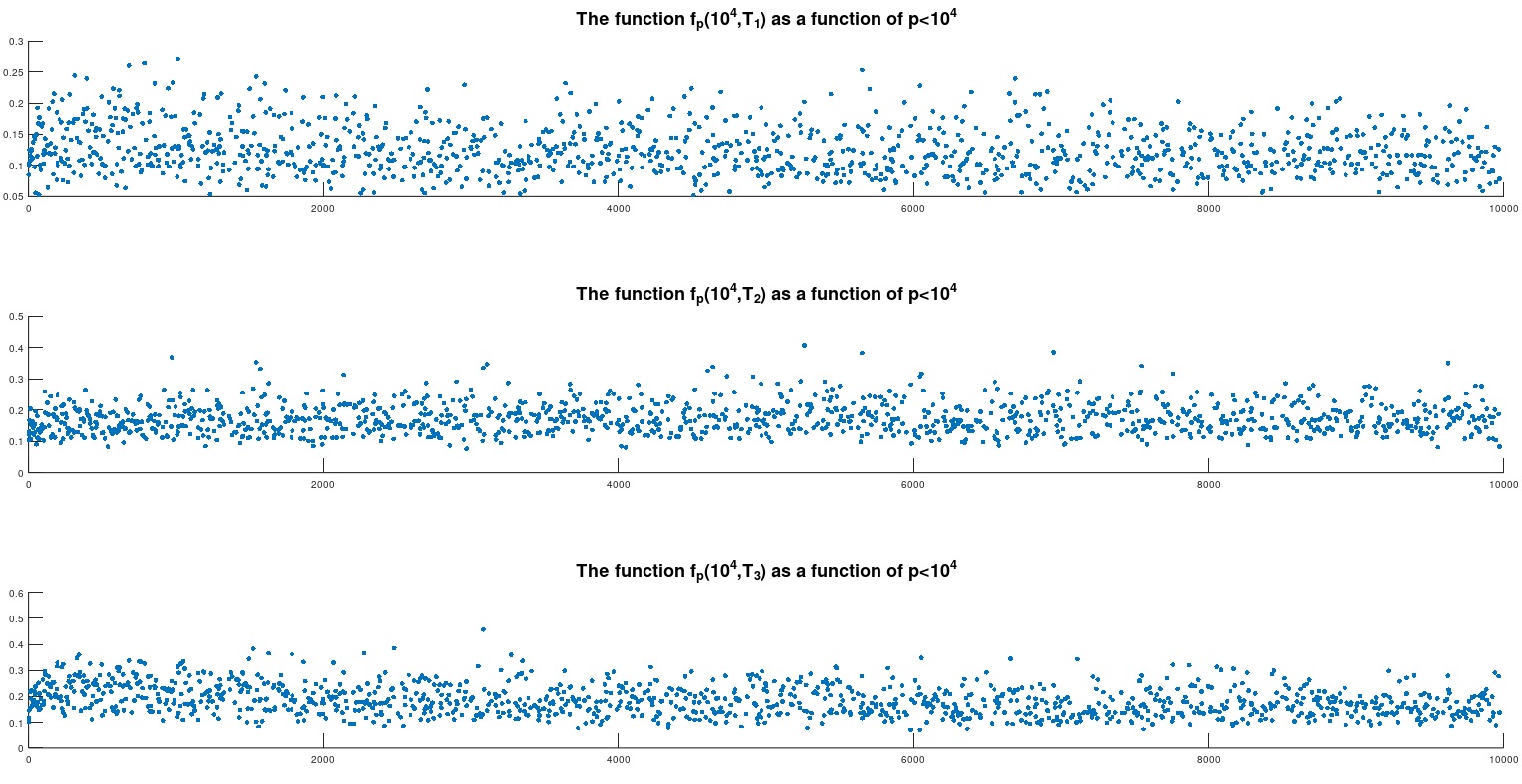}
\end{center}

\caption{A plot of $(p,f_{p}(10^4,T_j))$ for $p<10^4$ and $j=1,2,3.$ 
}\label{figure T123}
\end{figure}
From this data, it seems likely that any $\omega >0$ is admissible. Now, one might wonder whether this is still valid in the range $p>X$. To investigate this, in Figure~\ref{figure second T3} we plot the function $f_{p}(10^4,T_3)$ for every $10^4$-th prime up to $10^8$, revealing similar behaviour. Finally, we have also produced similar data associated to the quantity $N^-_{p}(X,T_j)$ with $j=1,2,3$, and the result was comparable to  Figure~\ref{figure T123}.

\begin{figure}[h]
\begin{center}
\includegraphics[scale=.55]{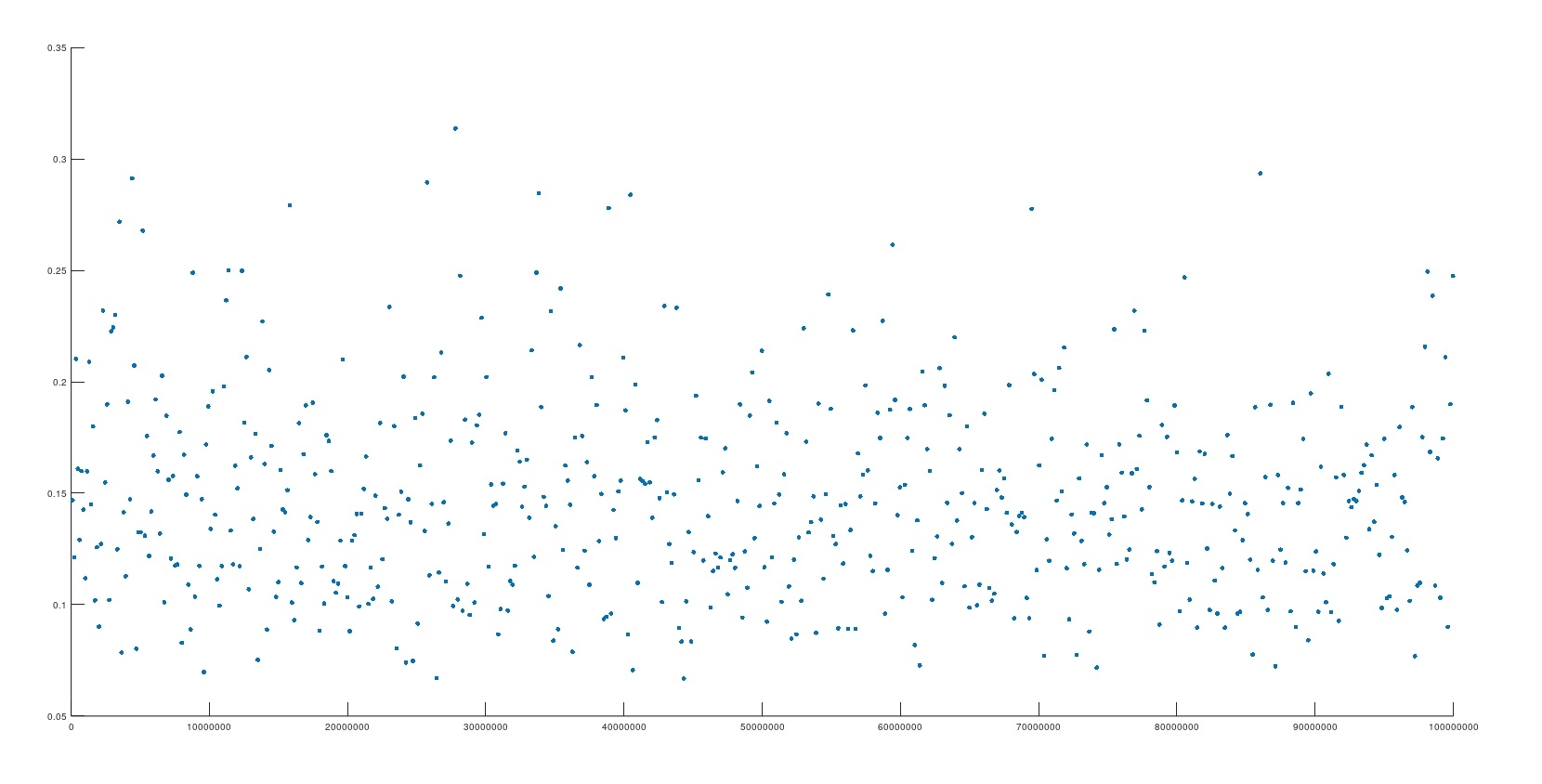}
\end{center}
\caption{A plot of some of the values of $(p,f_{p}(10^4,T_3))$ for $p<10^8$.
}
\label{figure second T3}
\end{figure}

However, it seems like the splitting type $T_4$ behaves  differently; see Figure~\ref{figure T45 first} for a plot of $p\cdot f_p(10^4,T_4)$ for every $p<10^5$.
\begin{figure}[h]
\begin{center}
\includegraphics[scale=.5]{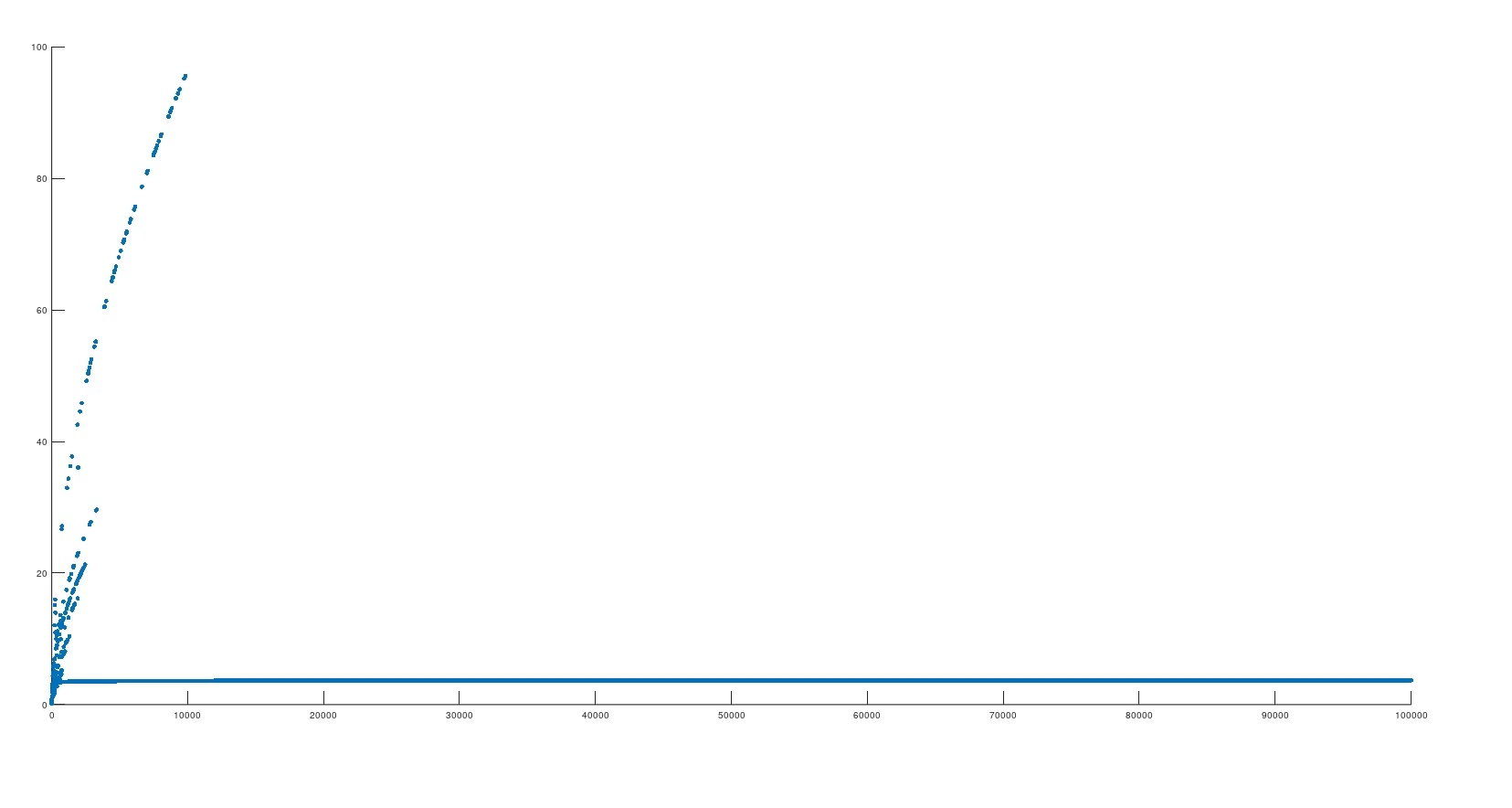}

\end{center}
\caption{A plot of $(p,pf_{p}(10^4,T_4))$ for $p<10^5$.
\label{figure T45 first}
}
\end{figure}
One can see that this graph is eventually essentially constant. This is readily explained by the fact that in the range $p>X$, we have $N^{\pm}_{p} (X,T_4) =0$. Indeed, if $p$ has splitting type $T_4$ in a cubic field $K$ of discriminant at most $X$, then $p$ must divide $D_K$, which implies that $p\leq X$. As a consequence, $pf_p(X,T_4)\asymp X^{\frac 12} $, which is constant as a function of $p$.  As for the more interesting range $p\leq X$, it seems like $f_p(X,T_4)\ll_\eps  p^{-\frac 12+\eps}X^\eps $ (i.e. for $T=T_4$, the values $\theta=\frac 12$ and any $\omega>-\frac 12$ are admissible in~\eqref{equation TT local 2}). In Figure~\ref{figure T4 second} we test this hypothesis with larger values of $X$ by plotting $p^{\frac 12} \cdot f_p(10^5,T_4)$ for all $p< 10^4$.
\begin{figure}[h]
\begin{center}
\includegraphics[scale=.55]{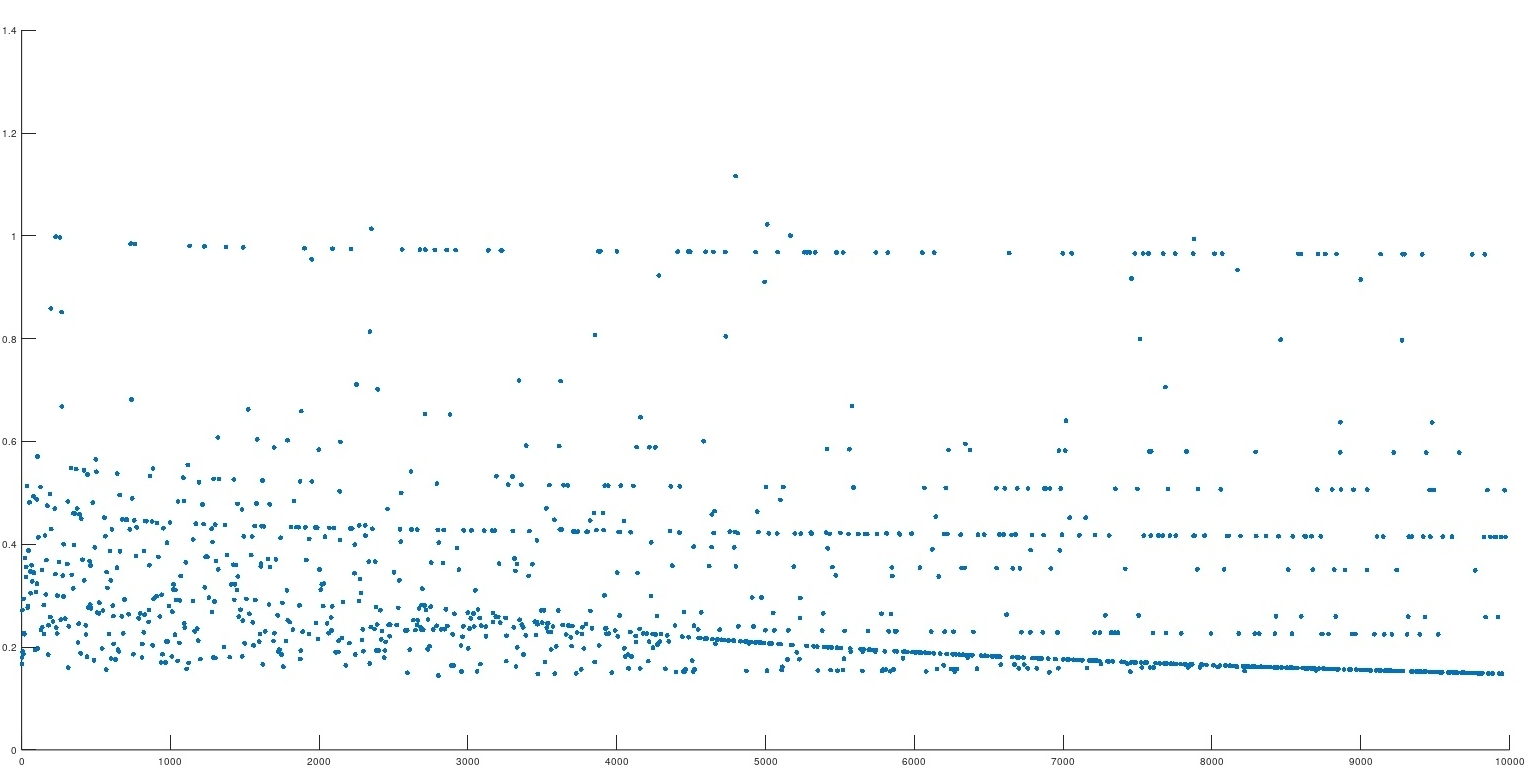}
\end{center}
\caption{A plot of $(p,p^{\frac 12}f_{p}(10^5,T_4))$ for $p<10^4$.
}
\label{figure T4 second}
\end{figure}
This seems to confirm that for $T=T_4$, the values $\theta=\frac 12$ and any $\omega>-\frac 12$ are admissible in~\eqref{equation TT local 2}. In other words, it seems like we have $E_p^+(X,T_4)\ll_\eps p^{-\frac 12+\eps} X^{\frac 12+\eps}$, and the sum of the two exponents here is $2\eps$, which is significantly smaller than the sum of exponents in Theorem~\ref{theorem omega result counts} which is  $\omega+\theta \geq \frac 12$. Note that this is not contradictory, since in that theorem we are assuming such a bound uniformly for all splitting types, and from the discussion above we expect that $E_p^+(X,T_1)\ll_\eps p^{\eps} X^{\frac 12+\eps}$ is essentially best possible.  Finally, we have also produced data for the quantity $N_p^-(X,T_4)$. The result was somewhat similar, but far from identical. We would require more data to make a guess as strong as the one we made for $E_p^+(X,T_4)$.

For the splitting type $T_5$, it seems like the error term is even smaller (probably owing to the fact that these fields are very rare). Indeed, this is what the graph of $p^2\cdot f_p(10^6,T_5)$ for all $p< 10^3$ in Figure~\ref{figure T5} indicates.
\begin{figure}[h]

\begin{center}
\includegraphics[scale=.5]{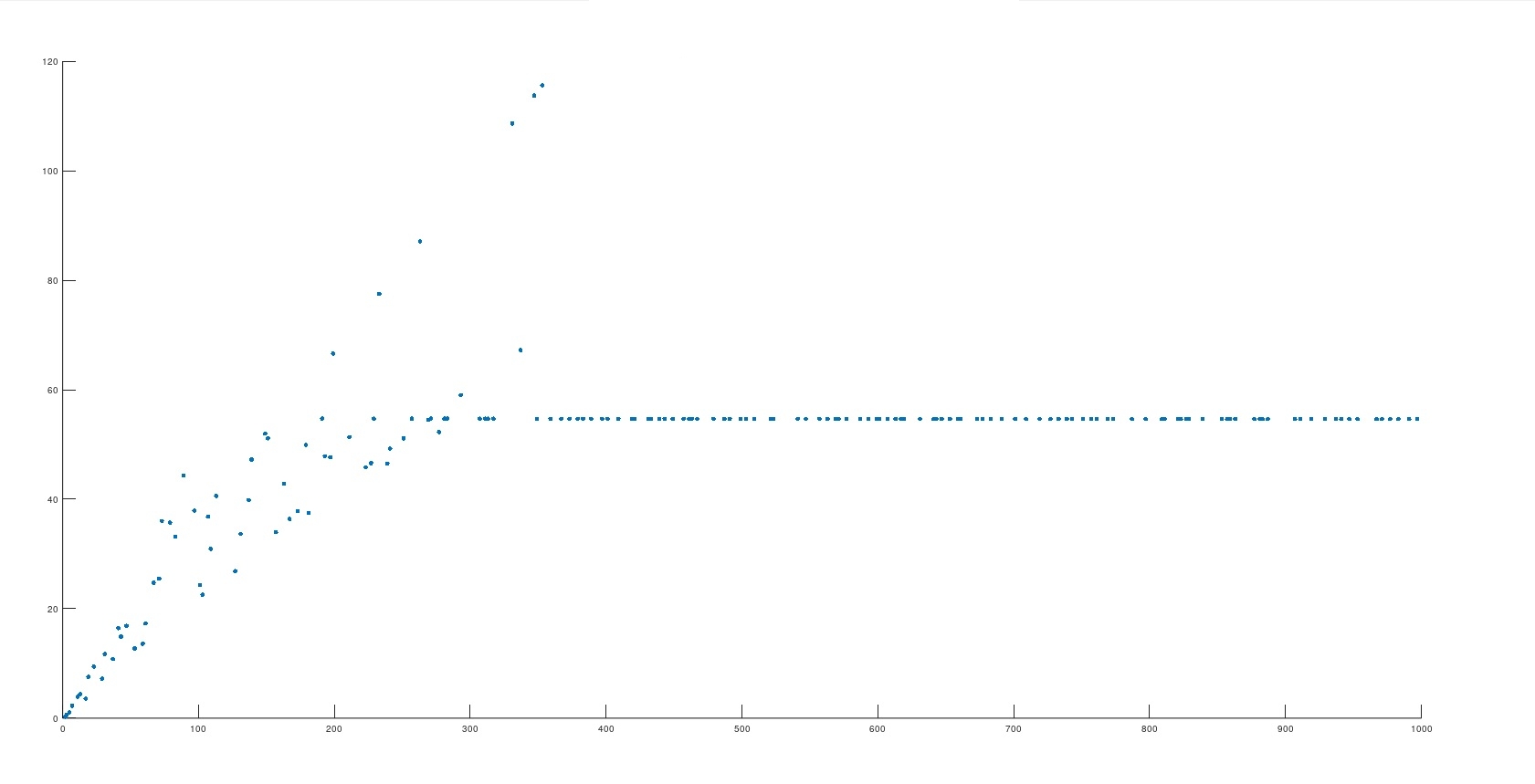}
\end{center}

\caption{A plot of $(p,p^{2}f_{p}(10^6,T_5))$ for $p<10^3$.
}
\label{figure T5}
\end{figure}
Again, there are two regimes. Firstly, by~\cite[p.\ 1216]{B}, $p>2$ has splitting type $T_5$ in the cubic field $K$ if and only if $p^2\mid D_K$, hence $N^{\pm}_p(X,T_5)=0$ for $p>X^{\frac 12}$ (that is $p^2\cdot f_p(X,T_5) \asymp X^{\frac 12}$). As for $p\leq X^{\frac 12}$, Figure~\ref{figure T5} indicates that $f_p(X,T_5) \ll_\eps p^{-1 +\eps}X^{\eps} $ (e.g. for $T=T_5$, the values $\theta=\frac 12$ and any $\omega>-1$ are admissible in~\eqref{equation TT local 2}). Once more, it is interesting to compare this with 
Theorem~\ref{theorem omega result counts}, since it seems like $ E^+_p(X,T_5) \ll_\eps p^{-1+\eps} X^{\frac 12+\eps} $, and the sum of the two exponents is now $-\frac 12+2\epsilon$.   We have also produced analogous data associated to the quantity $N_p^-(X,T_5)$. The result was somewhat similar.

Finally, we end this section with a graph (see Figure~\ref{figure E+(X)}) of $$E^+(X):=X^{-\frac 12} \big(N^+_{\rm all}(X)-C_1^+X-C_2^+X^{\frac 56}\big)$$ for $X< 10^{11}$ (which is the limit of Belabas' program\footnote{The program, based on the algorithm in~\cite{B}, can be found here: \url{https://www.math.u-bordeaux.fr/~kbelabas/research/cubic.html}} used for this computation). Here, $N_{\rm all}^+(X)$ counts all cubic fields of discriminant up to $X$, including Galois fields (by Cohn's work~\cite{C}, $N^+_{\rm all}(X) -N^+(X)\sim c X^{\frac 12}$, with $c=0.1585...$). This strongly supports the conjecture that $E^+(X)\ll_\eps X^{\frac 12+\eps}$ and that the exponent $\frac 12$ is best possible. It is also interesting that the graph is always positive, which is not without reminding us of Chebyshev's bias (see for instance the graphs in the survey paper~\cite{GM}) in the distribution of primes.

\begin{figure}[h] 
\begin{center}
\includegraphics[scale=.38]{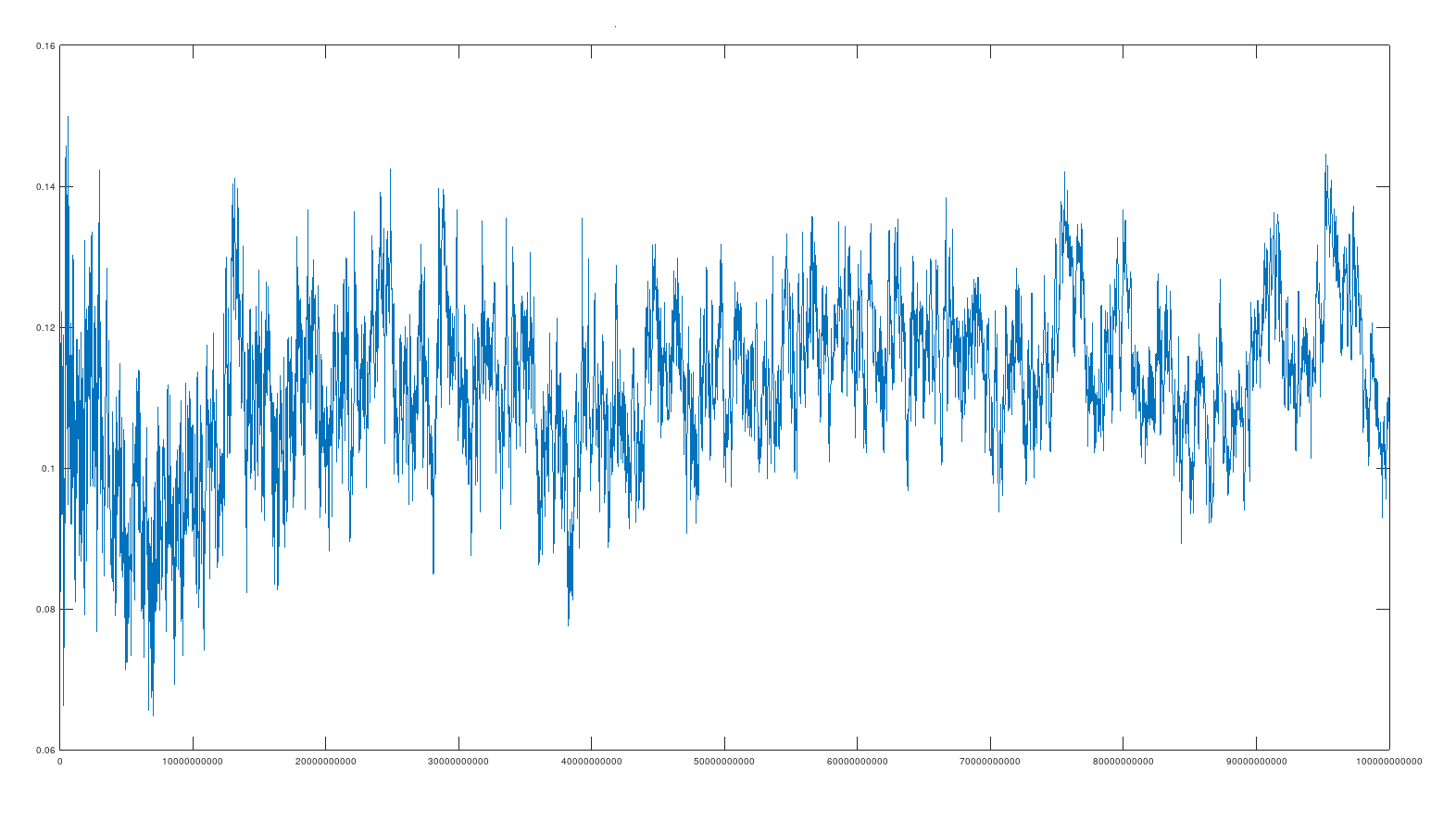}

\end{center}
\caption{A plot of $E^+(X) $ for $X<10^{11}$.
}
\label{figure E+(X)}
\end{figure}

Given this numerical evidence, one may summarize this section by stating that in all cases, it seems like we have square-root cancellation. More precisely, the data indicates that the bound
\begin{equation}
 N^+_{p} (X,T) -A^+_p (T ) X -B^+_p(T) X^{\frac 56} \ll_\eps (pX)^{\eps} \big(A^+_p (T ) X\big)^{\frac 12} 
\label{equation bound montgomery for cubic}
\end{equation}
could hold, at least for almost all $p$ and $X$. This is reminiscent of Montgomery's conjecture~\cite{Mo} for primes in arithmetic progressions, which states that
$$ \sum_{\substack{n\leq x \\ n\equiv a \bmod q}  }\Lambda(n)- \frac{x}{\phi(q)}  \ll_\eps x^{\eps} \Big(\frac{x}{\phi(q)}\Big)^{\frac 12} \qquad (q\leq x,\qquad (a,q)=1). $$
Precise bounds such as~\eqref{equation bound montgomery for cubic} seem to be far from reach with the current methods, however we hope to return to such questions in future work.

\end{document}